%
%
%
\documentclass{amsproc}

\usepackage{amsmath,amssymb}
\usepackage[figuresright]{rotating}
\usepackage{rotating}
\newtheorem{theorem}{Theorem}[section]
\newtheorem{lemm}[theorem]{Lemma}
\newtheorem{prop}[theorem]{Proposition}
\newtheorem{coro}[theorem]{Corollary}

\theoremstyle{definition}
\newtheorem{defi}[theorem]{Definition}

\theoremstyle{remark}
\newtheorem{remark}[theorem]{Remark}
\numberwithin{equation}{section}


\def\vn{\varepsilon}

\def\ot{\otimes}
\def\b{\overline}

\def\om{\omega}

\def\dim{\hbox{dim}}

\def\a{\alpha}

\def\b{\beta}

\newfont{\df}{eufm10}

\def\ot{\otimes}

\def\dim{\hbox{\rm dim}\,}

\def\ot{\otimes}




\begin{document}

\title[PBW-Bases and Restricted Two-Parameter Quantum Group of Type $F_4$]
{Convex PBW-type Lyndon Bases and Restricted Two-Parameter Quantum Group of Type $F_4$}
\author[Chen]{Xiaoyu Chen$^{\dagger,a,b}$}
\address{$^a$Department of Mathematics, SKLPMMP, East China Normal University,
Min Hang Campus, Dong Chuan Road 500, Shanghai 200241, PR China}
\address{$^b$Department of Mathematics, Ningbo University,
Fenghua Road 818, Ningbo 315211, PR China}
\email{gauss\_1024@126.com}
\thanks{$^\dagger$Xiaoyu Chen,
supported by the NNSF of China (Grant No. 11501546).}
\author[Hu]{Naihong Hu$^{\star,a}$}
\address{$^a$Department of Mathematics, SKLPMMP, East China Normal University,
Min Hang Campus, Dong Chuan Road 500, Shanghai 200241, PR China}
\email{nhhu@math.ecnu.edu.cn}
\thanks{$^\star$Naihong Hu,
supported by the NNSF of China (Grant No. 11271131).}
\author[Wang]{Xiuling Wang $^*$ }
\address{School of Mathematical Sciences and  LPMC, Nankai University,
Tianjin  300071, PR China}\email{xiulingwang@nankai.edu.cn}
\thanks{$^*$Xiuling Wang, corresponding author, supported by the NNSF of China (Grant No. 11271131).}

\subjclass{Primary 17B37, 81R50; Secondary 17B35}
\date{}


\keywords{Convex PBW-type Lyndon basis; restricted $2$-parameter
quantum groups; integrals; ribbon Hopf algebra.}
\begin{abstract}
We determine  convex PBW-type Lyndon bases for two-parameter quantum
groups $U_{r,s}(F_4)$ with detailed commutation relations.  We
construct a finite-dimensional Hopf algebra $\mathfrak
u_{r,s}(F_4)$, as a quotient of $U_{r,s}(F_4)$ by a Hopf ideal generated by certain central elements, which is pointed, and of a
Drinfel'd double structure under a certain condition.  All of Hopf
isomorphisms of $\mathfrak u_{r,s}(F_4)$ are determined which are important for seeking the possible new pointed objects in low order with $(\ell, 210)\ne 1$. Finally,
necessary and sufficient conditions for $\mathfrak u_{r,s}(F_4)$ to
be a ribbon Hopf algebra are singled out by describing the left and
right integrals.
\end{abstract}

\maketitle
\section{Introduction}
In the past fifteen years, a systematic study of the two-parameter
quantum groups has been going on, see, for instance, \cite{BW1, BW2}
for type $A$, \cite{BGH1, BGH2, BH, HS} for other finite types
except type $F_4$; for a unified definition, see \cite{HP};
\cite{HRZ, HZ1, HZ2, GHZ} for the affine types;  \cite{BW3, HW1,
HW2} for the restricted two-parameter quantum groups for some of
finite types.  In the present paper, we continue to treat with the
same questions for type $F_4$ in the roots of unity case. We give
the explicit description of PBW-type Lyndon bases of two-parameter
quantum group of type $F_4$ and study the restricted two-parameter
quantum group $\mathfrak u_{r,s}(F_4)$. Since, in the cases of
non-simply-laced Dynkin diagrams, much complexity in combinatorics
of Lyndon bases for type $F_4$ causes more inconvenience than those
for types $B$ and $G_2$, our techniques required here are somewhat
more subtle than those in \cite{HW1, HW2}.

\smallskip

We survey some previous related work.  The constructions of convex
PBW-type Lyndon bases in the two-parameter quantum groups have been
given for types $A$ in \cite{BW3}, types $E$ in \cite{BH}, for type
$G_2$ in \cite{HW1}, and for types $B$ in \cite{HW2}. When studying
the convex PBW-type Lyndon bases of the two-parameter quantum groups
and the restricted two-parameter quantum groups, we need use more
complicated combinatorics than those in one-parameter. The
nice bases for the corresponding nonrestricted quantum groups at
generic case are important to the studies of restricted (or small)
quantum groups as finite-dimensional Hopf algebras, even to the
liftings of small quantum groups in the classification work of
finite-dimensional pointed Hopf algebras due to
Andruskiewitsch-Schneider (\cite{AS2}). Good Lyndon words and Lyndon
bases have been described by Lalonde-Ram for semisimple Lie
algebras and their universal enveloping algebras \cite{LR}. 
Combinatorial constructions of quantum PBW-type Lyndon bases depend
not only on a fixed choice of a convex ordering on a positive root
system which is given by the longest element of (finite) Weyl group,
but also on the inserting manner of the $\bold q$-bracketings with
suitable structure constants into good Lyndon words (also see
\cite{K} for types $ABCD$). Lyndon bases have been also applied in
the classification of finite-dimensional pointed Hopf algebras or
Nichols algebras of diagonal type. Leclerc \cite{L} defined the
``co-standard factorization" of a Lyndon word which is different but
equivalent to that in \cite{LR, Ro} (while the case for type $F_4$ wasn't treated in \cite{L}). We make an inductive definition
of quantum root vectors for $F_4$ by the co-standard factorization
of good Lyndon words, which is different from the construction by
the standard factorization of good Lyndon words for types $ABG$.
Like one-parameter quantum groups in \cite{LS}, we provide more explicit commutator
expressions among these quantum root vectors, which are crucial later on.

\smallskip

 The paper is organized as follows.  In Section 2, we begin with the definition of two-parameter quantum group of type $F_4$,
 and give an explicit description of PBW type Lyndon bases. In Section 3, we compute some
important basic commutation relations among quantum root vectors and then derive a unified description of all commutation
relations.
These give rise to central elements of degree
$\ell$ in the case when parameters $r$, $s$ are roots of unity,
which generate a Hopf ideal. The quotient is the restricted
two-parameter quantum group $\mathfrak
u_{r,s}(F_4)$ in Section 4.  In Section 5, we show
that these Hopf algebras are pointed, and determine all Hopf
isomorphisms of $\mathfrak u_{r,s}(F_4)$. Section 6 is to prove that
$\mathfrak u_{r,s}(F_4)$ is of Drinfel'd double
under a certain condition. In Section 7, the left and right
integrals of $\mathfrak{b}$ are singled out based on the discussion
in Section 3, which are applied in Section 8 to determine the necessary and
sufficient conditions for $\mathfrak u_{r,s}(F_4)$
being ribbon.

\section{$U_{r,s}(F_4)$ and its convex PBW-type Lyndon basis}

\medskip
\noindent{\it 2.1.} According to the unified definition of two-parameter quantum groups in \cite{HP} using the Euler form, we can write down the defining relations of
two-parameter quantum group of type $F_4$.

Let $\mathbb{K}=\mathbb{Q}(r,s)$ be a subfield of  $\mathbb{C}$, where $r,s\in\mathbb{C}^*$ with assumptions  $r^2\neq
s^2, r^3\neq s^3$. Let $\Phi$ be  a root system of type $F_4$ with  $\Pi$
a base of  simple roots,  which is a finite subset of euclidian space
$E=\mathbb{R}^4$. Let $\epsilon_1, \epsilon_2,\epsilon_3,
\epsilon_4$ denote an orthonormal basis of $E$, then
$\Pi=\{\alpha_1=\epsilon_2-\epsilon_3,
\alpha_2=\epsilon_3-\epsilon_4, \alpha_3=\epsilon_4,
\alpha_4=\frac{1}{2}(\epsilon_1-\epsilon_2-\epsilon_3-\epsilon_4)\}$,
$\Phi=\{\pm\epsilon_i, \pm\epsilon_i\pm\epsilon_j,
\frac{1}{2}(\pm\epsilon_1\pm\epsilon_2\pm\epsilon_3\pm\epsilon_4)\}.$
Let $r_1=r_2=r^2, r_3=r_4=r, s_1=s_2=s^2, s_3=s_4=s$. Write the
matrix of structural constants as follows:
$$
\left(
\begin{array}{cccc}
r^2s^{-2}&s^2&1&1\\
r^{-2}&r^2s^{-2}&s^2&1\\
1&r^{-2}&rs^{-1}&s\\
1&1&r^{-1}&rs^{-1}
\end{array}
\right)=(a_{ij})_{4\times4}.
$$

\begin{defi}\label{D21}
Let $U=U_{r,s}(F_4)$ be the associative algebra over
$\mathbb{K}=\mathbb{Q}(r,s)$ generated by $E_i, F_i,
\omega_i^{\pm1}, \omega_i'^{\pm1} (1\leq i, j\leq4)$
subject to  relations $(F1)$---$(F6)$:
\begin{gather*}
\text{All $\omega_i^{\pm 1}, \omega_j'^{\pm 1}$'s commute with one
another and  $\omega_i\omega_i^{-1} =1= \omega_j'\omega_j'^{-1}$}. \tag{\text{$F1$}}\\
\omega_iE_j\omega_i^{-1}=a_{ij}E_j, \quad \omega_iF_j\omega_i^{-1}=a_{ij}^{-1}F_j. \tag{\text{$F2$}}\\
\omega_i^\prime E_j\omega_i'^{-1}=a_{ji}^{-1}E_j, \quad \omega_i^\prime F_j\omega_i'^{-1}=a_{ji}F_j.  \tag{\text{$F3$}}\\
[\,E_i,
F_j\,]=\delta_{ij}\frac{\om_i-\om_i'}{r_i-s_i}. \tag{\text{$F4$}}
\end{gather*}
\noindent $(F5)$ \, The following $(r,\,s)$-Serre
relations hold:
\begin{gather*}
E_1^2E_2-(r^2+s^2)E_1E_2E_1+r^2s^2E_2E_1^2=0,\tag*{$(F5)_1$}\\
E_2^2E_1-(r^{-2}+s^{-2})E_2E_1E_2+r^{-2}s^{-2}E_1E_2^2=0,\tag*{$(F5)_2$}\\
E_2^2E_3-(r^2+s^2)E_2E_3E_2+r^2s^2E_3E_2^2=0,\tag*{$(F5)_3$}\\
\begin{split}
&E_3^3E_2-(r^{-2}+r^{-1}s^{-1}+s^{-2})E_3^2E_2E_3\\  & \quad\qquad+r^{-1}s^{-1}(r^{-2}+r^{-1}s^{-1}+s^{-2})E_3E_2E_3^2-r^{-3}s^{-3}E_2E_3^3=0,
\end{split}\tag*{$(F5)_4$}\\
E_3^2E_4-(r+s)E_3E_4E_3+rsE_4E_3^2=0,\tag*{$(F5)_5$}\\
E_4^2E_3-(r^{-1}+s^{-1})E_4E_3E_4+r^{-1}s^{-1}E_3E_4^2=0.\tag*{$(F5)_6$}
\end{gather*}
\noindent $(F6)$ \,  The following $(r,\,s)$-Serre
relations hold:
\begin{gather*}
F_2F_1^2-(r^2+s^2)F_1F_2F_1+r^2s^2F_1^2F_2=0,\tag*{$(F6)_1$}\\
F_1F_2^2-(r^{-2}+s^{-2})F_2F_1F_2+r^{-2}s^{-2}F_2^2F_1=0,\tag*{$(F6)_2$}\\
F_3F_2^2-(r^2+s^2)F_2F_3F_2+r^2s^2F_2^2F_3=0,\tag*{$(F6)_3$}\\
\begin{split}
&F_2F_3^3-(r^{-2}+r^{-1}s^{-1}+s^{-2})F_3F_2F_3^2\\
& \quad\qquad+r^{-1}s^{-1}(r^{-2}+r^{-1}s^{-1}+s^{-2})F_3^2F_2F_3-r^{-3}s^{-3}F_3^3F_2=0,
\end{split}\tag*{$(F6)_4$}\\
F_4F_3^2-(r+s)F_3F_4F_3+rsF_3^2F_4=0,\tag*{$(F6)_5$}\\
F_3F_4^2-(r^{-1}+s^{-1})F_4F_3F_4+r^{-1}s^{-1}F_4^2F_3=0.\tag*{$(F6)_6$}\\
\end{gather*}
\end{defi}

The algebra $U_{r,s}(F_4)$ is a Hopf algebra, where all
$\om_i^{\pm1}, {\om_i'}^{\pm1}$'s  are group-like elements, and the
remaining Hopf structure is given by
\begin{gather*}
\Delta(E_i)=E_i\otimes1+\omega_i\otimes E_i,\quad  \Delta(F_i)=1\otimes F_i+F_i\otimes \omega_i^\prime,\\
\varepsilon(\omega_i^{\pm1})=\varepsilon(\omega_i^\prime)^{\pm1}=1, \quad \varepsilon(E_i)=\varepsilon(F_i)=0,\\
S(\om_i^{\pm1})=\om_i^{\mp1}, \qquad
S({\om_i'}^{\pm1})={\om_i'}^{\mp1},\\
S(E_i)=-\om_i^{-1}E_i,\qquad S(F_i)=-F_i\,{\om_i'}^{-1}.
\end{gather*}
When $r = q=s^{-1}$, the Hopf algebra $U_{r,\,s}(F_4)$ modulo the
Hopf ideal generated by $\omega_i' - \omega_i^{-1} (1\leqslant i\leqslant 4)$, is just
the quantum group $U_q(F_4)$ of Drinfel'd-Jimbo type.

In any Hopf algebra $\mathcal H$, there exist the left-adjoint and
the right-adjoint action defined by its Hopf algebra structure as
follows
$$
\text{ad}_{ l}\,a\,(b)=\sum_{(a)}a_{(1)}\,b\,S(a_{(2)}), \qquad
\text{ad}_{ r}\,a\,(b)=\sum_{(a)}S(a_{(1)})\,b\,a_{(2)},
$$
where $\Delta(a)=\sum_{(a)}a_{(1)}\ot a_{(2)}\in \mathcal H\otimes
\mathcal H$, for any $a$, $b\in \mathcal H$.

From the viewpoint of adjoint actions, the $(r,s)$-Serre relations
$(F5)$, $(F6)$ take the simplest forms:
\begin{gather*}
\bigl(\text{ad}_l\,E_i\bigr)^{1-c_{ij}}\,(E_j)=0,
\qquad\text{\it for any } \ i\ne j,\\
\bigl(\text{ad}_r\,F_i\bigr)^{1-c_{ij}}\,(F_j)=0, \qquad\text{\it
for any } \ i\ne j,
\end{gather*}
where $C=(c_{ij})_{4\times 4}$ is the Cartan matrix of type $F_4$.

\smallskip

\noindent {\it 2.2.} $U$ has a triangular decomposition
$U\cong U^-\otimes U^0\otimes U^+$, where $U^0$ is the subalgebra
generated by $\omega_i^{\pm 1}, {\omega'_i}^{\pm 1}$, and $U^+$
(resp. $U^-$) is the subalgebra generated by $E_i$
 (resp. $F_i$). Let $\mathcal{B}$ (resp. $\mathcal{B}'$)
denote the Hopf subalgebra of $U$ generated by $E_j, \omega_j^{\pm
1}$ (resp. $F_j, {\omega'_j}^{\pm 1}$) with $1\leq j\leq 4$.

\begin{prop} \label{P22}
There exists a unique skew-dual pairing $\langle\,,\rangle:$ $
\mathcal{B}'\times \mathcal{B}\rightarrow \mathbb{K}$ of the Hopf
subalgebras $\mathcal{B}$ and $\mathcal{B}'$ such that
\begin{gather*}
\langle F_i,\,E_j \rangle = \delta_{ij} \frac{1}{s_i - r_i},\\
\langle \omega_i',\,\omega_j \rangle =
a_{ji},\\
\langle {\omega'}^{\pm 1}_i, \omega^{-1}_j\rangle=\langle
{\omega'}^{\pm 1}_i, \omega_j\rangle^{-1}=
 \langle \omega_i', \omega_j\rangle^{\mp 1},
\end{gather*} for $1 \leq i,\,j \leq 4$
and all other pairs of generators are $0$. Moreover, we have
$\langle S(a),\,S(b) \rangle = \langle a,\,b \rangle$ for $a\in
\mathcal{B}', b\in \mathcal{B}$.
\end{prop}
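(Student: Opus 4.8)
The plan is to establish existence and uniqueness of the skew-dual pairing by the standard technique for quantum groups: first define the pairing on generators as prescribed, then extend it to all of $\mathcal{B}' \times \mathcal{B}$ using the Hopf-algebraic compatibility conditions, and finally verify that this extension is well-defined by checking that it respects the defining relations $(F1)$--$(F6)$ on both sides. Recall that a skew-dual pairing of Hopf algebras must satisfy $\langle f, ab \rangle = \sum_{(f)} \langle f_{(1)}, a \rangle \langle f_{(2)}, b \rangle$ and $\langle fg, a \rangle = \sum_{(a)} \langle f, a_{(1)} \rangle \langle g, a_{(2)} \rangle$, together with $\langle f, 1 \rangle = \varepsilon(f)$ and $\langle 1, a \rangle = \varepsilon(a)$. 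These multiplicativity rules force the value of $\langle \cdot, \cdot \rangle$ on arbitrary monomials once it is fixed on generators, which gives uniqueness immediately.

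For existence, the cleanest route is to realize the pairing as coming from the Hopf structure itself. First I would record the pairings among the group-like elements: from $\langle \omega_i', \omega_j \rangle = a_{ji}$ and multiplicativity one computes $\langle \omega_i', \omega_j^{-1} \rangle = a_{ji}^{-1}$ and more generally the pairing between arbitrary products of the $\omega_i'^{\pm 1}$ and $\omega_j^{\pm 1}$, which is consistent because the $a_{ij}$ are invertible in $\mathbb{K}$. Next I would verify the mixed pairings, e.g. using $\Delta(E_j) = E_j \otimes 1 + \omega_j \otimes E_j$ and $\Delta(F_i) = 1 \otimes F_i + F_i \otimes \omega_i'$ to check compatibility of $\langle F_i, E_j \rangle = \delta_{ij}/(s_i - r_i)$ with the action of the group-likes, recovering the commutation relations $(F2)$, $(F3)$ as pairing identities. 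The technically demanding part is confirming that the pairing annihilates the Serre relation elements: one must show, for each of $(F5)_1$--$(F5)_6$ (and dually $(F6)$), that pairing the corresponding element of $\mathcal{B}$ against every monomial in $\mathcal{B}'$ yields zero, so that the pairing descends to the quotient by the Serre ideals.

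I expect the main obstacle to be precisely this verification on the Serre relations, since the type $F_4$ relations $(F5)_4$ and $(F6)_4$ are cubic with the nontrivial structure constants $r^{-2} + r^{-1}s^{-1} + s^{-2}$ and $r^{-1}s^{-1}(r^{-2}+r^{-1}s^{-1}+s^{-2})$, making the bookkeeping considerably heavier than in the classical one-parameter case. The cleanest way to handle this uniformly is to invoke the general machinery: the skew-Hopf pairing on the free (or Drinfel'd-double) level always exists, and one checks that the left radical of the pairing is a Hopf ideal containing exactly the Serre elements, so the quotient pairing on $\mathcal{B}' \times \mathcal{B}$ is well-defined. Because the matrix $(a_{ij})$ here is built from the Euler form in the unified framework of \cite{HP}, the requisite compatibility between the two Serre ideals and the pairing is automatic from that construction, reducing the labor to a finite check.

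Finally, the identity $\langle S(a), S(b) \rangle = \langle a, b \rangle$ follows formally from the axioms of a skew-dual pairing together with the fact that the antipodes of $\mathcal{B}$ and $\mathcal{B}'$ are bijective: one verifies it on generators using the explicit antipode formulas $S(E_i) = -\omega_i^{-1} E_i$, $S(F_i) = -F_i \omega_i'^{-1}$, $S(\omega_i^{\pm 1}) = \omega_i^{\mp 1}$, $S(\omega_i'^{\pm 1}) = \omega_i'^{\mp 1}$, and then extends to all of $\mathcal{B}' \times \mathcal{B}$ by the multiplicativity properties, since $S$ is an anti-homomorphism and the pairing is multiplicative in each slot. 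I would carry out the generator check directly and then appeal to multiplicativity to conclude in general.
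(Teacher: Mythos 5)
The paper states Proposition \ref{P22} without proof, deferring implicitly to the analogous results in the cited literature ([BW1], [HW1], [HW2]) and to the unified Euler-form construction of [HP]; your plan --- define the pairing on generators, extend by (co)multiplicativity, check well-definedness against the relations with the Serre relations as the only nontrivial case, and verify the antipode identity on generators --- is exactly that standard argument, so in outline you are doing what the paper relies on.

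There is, however, one concrete slip that would derail the verification if carried out literally: the compatibility axioms you write, namely $\langle f, ab\rangle=\sum\langle f_{(1)},a\rangle\langle f_{(2)},b\rangle$ \emph{and} $\langle fg,a\rangle=\sum\langle f,a_{(1)}\rangle\langle g,a_{(2)}\rangle$, are the axioms of an ordinary dual pairing, not a \emph{skew}-dual pairing; exactly one of the two rules must use the opposite comultiplication (e.g.\ $\langle f,ab\rangle=\sum\langle f_{(2)},a\rangle\langle f_{(1)},b\rangle$). This is not cosmetic: with your two rules as stated, testing the relation $\omega_iE_j=a_{ij}E_j\omega_i$ gives $\langle F_j,\omega_iE_j\rangle=\langle F_j,E_j\rangle$ while $\langle F_j,E_j\omega_i\rangle=a_{ij}\langle F_j,E_j\rangle$, so consistency would force $a_{ij}^2=1$, which fails (e.g.\ $a_{11}=r^2s^{-2}$). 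With the correct skew convention the same test gives $a_{ij}\langle F_j,E_j\rangle=a_{ij}\langle F_j,E_j\rangle$ and everything goes through, including your generator check of $\langle S(F_i),S(E_j)\rangle=\langle F_i\omega_i'^{-1},\omega_j^{-1}E_j\rangle=a_{ji}^{-1}\langle F_i,E_j\rangle a_{ji}=\langle F_i,E_j\rangle$. Beyond this, your deferral of the Serre-relation verification to ``general machinery'' is acceptable in spirit (this is precisely what the Euler-form framework of [HP] and the radical-of-the-form argument provide), but note that for type $F_4$ the check that the cubic Serre elements $(F5)_4$, $(F6)_4$ pair to zero against all monomials of the same degree is a finite computation that must actually be recorded if one does not invoke that framework; as a plan this is fine, as a proof it is incomplete until that convention is corrected and that check is performed.
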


\noindent {\it 2.3.}
Recall that a reduced expression of the longest element of Weyl
group $W$ for type $F_4$  (see \cite{CX}) taken as
$$
w_0=s_1s_2s_3s_2s_1s_4s_3s_2s_3s_4s_1s_2s_3s_2s_1s_2s_3s_2s_4s_3s_2s_4s_3s_4$$
yields a convex
ordering of positive roots.
Write the positive root system $\Phi^+ = \{\b_1,\b_2,\cdots, \b_{24}\}$ with the positive root $\b_i$  given in Table I.
We will work with this fixed choice of a convex ordering on  $\Phi^+$, which our  constructions will implicitly depend on.

Note that the above ordering also corresponds to the standard Lyndon
tree of type $F_4$ (see \cite{LR}):

\unitlength 0.6mm 
\linethickness{0.1pt}
\ifx\plotpoint\undefined\newsavebox{\plotpoint}\fi 

\begin{center}
\begin{picture}(90,90)(0,-70)
\put(0,0){\circle{2}} \put(10,0){\circle{2}}\put(20,0){\circle{2}} \put(30,0){\circle{2}}\put(40,0){\circle{2}}\put(50,0){\circle{2}}
\put(60,0){\circle{2}}\put(70,0){\circle{2}}\put(80,0){\circle{2}}\put(90,0){\circle{2}}
\put(50,10){\circle*{2}}\put(60,10){\circle*{2}}\put(70,10){\circle*{2}}\put(80,10){\circle*{2}}\put(90,10){\circle*{2}}\put(100,10){\circle{2}}
\put(30,-10){\circle{2}}\put(40,-10){\circle{2}}
\put(50,-10){\circle{2}}\put(60,-10){\circle{2}}
\put(0,-20){\circle{2}}\put(10,-20){\circle{2}}\put(20,-20){\circle{2}}\put(30,-20){\circle{2}}\put(40,-20){\circle{2}}
\put(20, -30){\circle{2}}
\put(0, -40){\circle{2}}\put(10, -40){\circle{2}}
\put(0, -50){\circle{2}}
\put(1,0){\line(1,0){8}} \put(11,0){\line(1,0){8}} \put(21,0){\line(1,0){8}} \put(31,0){\line(1,0){8}} \put(41,0){\line(1,0){8}} \put(51,0){\line(1,0){8}}
\put(61,0){\line(1,0){8}} \put(71,0){\line(1,0){8}} \put(81,0){\line(1,0){8}}
\put(41,1){\line(1,1){8}}\put(51,10){\line(1,0){8}}\put(61,10){\line(1,0){8}}
\put(71,10){\line(1,0){8}}
\put(81,10){\line(1,0){8}}
\put(91,10){\line(1,0){8}}
\put(21,-1){\line(1,-1){8}}\put(31,-10){\line(1,0){8}}
\put(41,-1){\line(1,-1){8}}\put(51,-10){\line(1,0){8}}
\put(1,-20){\line(1,0){8}} \put(11,-20){\line(1,0){8}}
\put(21,-20){\line(1,0){8}}
\put(31,-20){\line(1,0){8}}
\put(11,-21){\line(1,-1){8}}
\put(1,-40){\line(1,0){8}}
\put(-1,2){1}\put(9,2){2}\put(19,2){3}\put(29,2){4}\put(39,2){3}\put(49,2){4}\put(59,2){2}\put(69,2){3}\put(79,2){3}\put(89,2){2}
\put(49,12){1}\put(59,12){2}\put(69,12){3}\put(79,12){4}\put(89,12){3}\put(99,12){2}
\put(29,-8){3}\put(39,-8){2}\put(49,-8){2}\put(59,-8){3}
\put(-1,-18){2}\put(9,-18){3}\put(19,-18){4}\put(29,-18){3}\put(39,-18){4}
\put(20,-28){3}\put(-1,-38){3}\put(9,-38){4}
\put(-1,-48){4}
\end{picture}
\end{center}

We can make an inductive definition of the quantum root vectors  $
E_{\b_i}$ in $U^+$. Write $E_{\b_i}=E_i\ (1\leqslant i\leqslant 4)$
if $\b_i $ is a simple root. Let $\gamma\in \Phi^+$ (not a simple
root), define a minimal pair for $\gamma$  to be a pair $(\a,\b)$ of
positive roots such that $\gamma=\a+\b$, $\a<\gamma<\b$ and there is
no pair of positive roots $(\a', \b')$ such that $\gamma=\a'+\b'$
and

\begin{center}
Table I:  Positive roots with the convex ordering and quantum root vectors
\end{center}

\bigskip

\begin{tiny}
\newcommand{\tabincell}[2]{\begin{tabular}{@{}#1@{}}#2\end{tabular}}

\begin{tabular}{|c|c|c|c|c|c|}

  \hline
   order  &\tabincell{c} {positive quantum\\ root vectors}  &  positive roots  &  \tabincell{c} {subscripts of\\
   std. Lyndon words $u_i$} &$h{(\b_i)}$& \tabincell{c} {${({M})}$=minimal pair\\ ${({L})}$=Lyndon pair} \\
  \hline
  1 & $E_1$ & $\b_1{=}\a_1$  & $u_1{=}1$ &1 &$ simple $\\
  \hline
  2 & $E_{12}$ & $\b_2{=}\a_1{+}\a_2$ &$u_2{=}12{=}1\; 2$ & 2&
$\b_2{=}\b_1{+}\b_{16}\ (M)$ \\
  \hline
 3 & $E_{123}$ & $\b_3{=}\a_1{+}\a_2{+}\a_3$ &\tabincell{c}{$u_3{=}123{=}12\; 3$\\${=}1\; 23$} &3&
\tabincell{c}{$ \b_3{=}\b_2{+}\b_{22}\ (M)$\\ ${=}\b_1{+}\b_{17}\ (L)$} \\
  \hline
  4 & $E_{1233}$ & $\b_4{=}\a_1{+}\a_2{+}2\a_3$ &\tabincell{c}{$u_4{=}1233{=}123\; 3$\\${=}1\; 233$} &4&
\tabincell{c}{$\b_4{=}\b_3{+}\b_{22}\ (M)$\\
${=}\b_1{+}\b_{18}\ (L)$} \\
  \hline
  5 & $E_{12332}$ & $\b_5{=}\a_1{+}2\a_2{+}2\a_3$ &$u_5{=}12332{=}1233\; 2$ &5&
     \tabincell{ccc} {$\b_5{=}\b_4{+}\b_{16}\ (M)$\\ ${=}\b_3{+}\b_{17}$ \\ ${=}\b_2{+}\b_{18}$} \\
  \hline
 6 & $E_{1234}$ & \tabincell{c}{$\b_6{=}\a_1{+}\a_2$\\${+}\a_3{+}\a_4$} &\tabincell{cc}{$u_6{=}1234{=}123\; 4$\\${=}1\; 234$\\${=}12\; 34$} &4&
     \tabincell{ccc} {$\b_6{=}\b_3{+}\b_{24}\ (M)$ \\${=}\b_2{+}\b_{23}\ (L)$ \\ ${=}\b_1{+}\b_{19}\ (L)$} \\
  \hline
 7 & $E_{12343}$ &\tabincell{c} {$\b_7{=}\a_1{+}\a_2$\\${+}2\a_3{+}\a_4$} &\tabincell{cc}{$u_7{=}12343{=}1234\; 3$\\${=}1\; 2343$} &5&
     \tabincell{cccc} {$\b_7{=}\b_6{+}\b_{22}\ (M)$\\ ${=}\b_4{+}\b_{24}$ \\ ${=}\b_3{+}\b_{23}$\\ ${=}\b_1{+}\b_{20}\ (L)$} \\
  \hline
 8 & $E_{12343123432}$ &\tabincell{c}{$\b_8{=}2\a_1{+}3\a_2$\\${+}4\a_3{+}2\a_4$} & \tabincell{ccc}{$ u_8{=}12343123432$\\${=}12343\; 123432$} &11&
     \tabincell{ccccccc} {$\b_8{=}\b_7{+}\b_{9}\ (M)$\\${=}\b_6{+}\b_{10}$ \\ ${=}\b_5{+}\b_{11}$\\ ${=}\b_4{+}\b_{12}$\\${=}\b_3{+}\b_{13}$\\ ${=}\b_2{+}\b_{14}$\\ ${=}\b_1{+}\b_{15}$} \\
  \hline
 9 & $E_{123432}$ &\tabincell{c}{ $\b_9{=}\a_1{+}2\a_2$\\${+}2\a_3{+}\a_4$} &$u_9{=}123432{=}12343\; 2$ &6&
     \tabincell{ccccc} {$\b_9{=}\b_7{+}\b_{16}\ (M)$ \\${=}\b_6{+}\b_{17}$ \\ ${=}\b_5{+}\b_{24}$\\ ${=}\b_3{+}\b_{19}$\\${=}\b_2{+}\b_{20}$} \\
  \hline
 10 & $E_{1234323}$ &\tabincell{c}{$\b_{10}{=}\a_1{+}2\a_2$\\${+}3\a_3{+}\a_4$} &\tabincell{cc}{$u_{10}{=}1234323$\\
 ${=}123432\; 3$} &7&
     \tabincell{cccccc} {$\b_{10}{=}\b_9{+}\b_{22}\ (M)$ \\$\quad {=}\b_7{+}\b_{17}\ (L)$ \\ ${=}\b_6{+}\b_{18}$\\ ${=}\b_5{+}\b_{23}$\\${=}\b_4{+}\b_{19}$\\ ${=}\b_3{+}\b_{20}$} \\
  \hline
11 & $E_{123434}$ & \tabincell{c}{$\b_{11}{=}\a_1{+}\a_2$\\${+}2\a_3{+}2\a_4$} & \tabincell{cccccc} {$u_{11}{=}123434{=}12343\; 4$\\${=}1234\; 34$\\${=}1\; 23434$} &6&
  \tabincell{ccc} {$\b_{11}{=}\b_7{+}\b_{24}\ (M)$\\ ${=}\b_6{+}\b_{23}\ (L)$\\ ${=}\b_1{+}\b_{21}\ (L)$}  \\
  \hline
  12 & $E_{1234342}$ & \tabincell{c}{$\b_{12}{=}\a_1{+}2\a_2$\\${+}2\a_3{+}2\a_4$} &\tabincell{ccc}{$u_{12}{=}1234342$\\${=}123434\; 2$} &7&
  \tabincell{cccccc} {$\b_{12}{=}\b_{11}{+}\b_{16}\ (M)$\\ ${=}\b_9{+}\b_{24}\ (L)$\\ ${=}\b_6{+}\b_{19}$}  \\
  \hline
13 & $E_{12343423}$ & \tabincell{c}{$\b_{13}{=}\a_1{+}2\a_2$\\${+}3\a_3{+}2\a_4$} &\tabincell{ccc}{$u_{13}{=}12343423$\\${=}1234342\; 3$} &8&
     \tabincell{cccccc} {$\b_{13}{=}\b_{12}{+}\b_{22}\ (M)$\\$\quad{=}\b_{11}{+}\b_{17}\ (L)$\\${=}\b_{10}{+}\b_{24}$ \\ ${=}\b_9{+}\b_{23}$\\ ${=}\b_7{+}\b_{19}$\\${=}\b_6{+}\b_{20}$} \\
  \hline
14 & $E_{123434233}$ & \tabincell{c}{$\b_{14}{=}\a_1{+}2\a_2$\\${+}4\a_3{+}2\a_4$} & \tabincell{cccccc} {$u_{14}{=}123434233$\\${=}12343423\; 3$\\${=}123434\; 233$} &9&
     \tabincell{cccccc} {$\b_{14}{=}\b_{13}{+}\b_{22}\ (M)$\\  $\quad\quad {=}\b_{11}{+}\b_{18}\ (L)$\\ ${=}\b_{10}{+}\b_{23}$\\${=}\b_7{+}\b_{20}$\\${=}\b_4{+}\b_{21}$} \\
  \hline
15 & $E_{1234342332}$ & \tabincell{c}{$\b_{15}{=}\a_1{+}3\a_2$\\${+}4\a_3{+}2\a_4$} & \tabincell{cccccc}{$u_{15}{=}1234342332$\\
${=}123434233\; 2$} &10&
     \tabincell{cccccc} {$\b_{15}{=}\b_{14}{+}\b_{16}\ (M)$\\${=}\b_{13}{+}\b_{17}$ \\ ${=}\b_{12}{+}\b_{18}$\\${=}\b_{10}{+}\b_{19}$\\${=}\b_9{+}\b_{20}$\\${=}\b_5{+}\b_{21}$} \\
  \hline
16 & $E_2$ & $\b_{16}{=}\a_2$  & $u_{16}{=}2$ &1 &$ simple $\\
  \hline
17 & $E_{23}$ & $\b_{17}{=}\a_2{+}\a_3$ &$u_{17}{=}23{=}2\; 3$ & 2&
$\b_{17}{=}\b_{16}{+}\b_{22}\ (M)$ \\
  \hline
 18 & $E_{233}$ & $\b_{18}{=}\a_2{+}2\a_3$ &$u_{18}{=}233{=}23\; 3$ &3&
$\b_{18}{=}\b_{17}{+}\b_{22}\ (M)$ \\
  \hline
 19 & $E_{234}$ & $\b_{19}{=}\a_2{+}\a_3{+}\a_4$ &     \tabincell{cccccc} {$u_{19}{=}234{=}23\; 4$\\${=}2\; 34$} &3&
\tabincell{cc} {$\b_{19}{=}\b_{17}{+}\b_{24}\ (M)$ \\${=}\b_{16}{+}\b_{23}\ (L)$} \\
  \hline
 20 & $E_{2343}$ & $\b_{20}{=}\a_2{+}2\a_3{+}\a_4$ &$u_{20}{=}2343{=}234\; 3$ &4&
\tabincell{ccc} {$\b_{20}{=}\b_{19}{+}\b_{22}\ (M)$\\${=}\b_{18}{+}\b_{24}$\\ ${=}\b_{17}{+}\b_{23}$}\\
  \hline
  21 & $E_{23434}$ & $\b_{21}{=}\a_2{+}2\a_3{+}2\a_4$ &     \tabincell{cccccc} {$u_{21}{=}23434{=}2343\; 4$\\{=}$234\; 34$} &5&
\tabincell{cc} {$\b_{21}{=}\b_{20}{+}\b_{24}\ (M)$\\ ${=}\b_{19}{+}\b_{23}\ (L)$}\\
  \hline
 22 & $E_3$ & $\b_{22}{=}\a_3$  & $u_{22}{=}3$ &1 &$ simple $\\
  \hline
  23 & $E_{34}$ & $\b_{23}{=}\a_3{+}\a_4$ &$u_{23}{=}34{=}3\; 4$ &2&
$\b_{23}{=}\b_{22}{+}\b_{24}\ (M)$\\
  \hline
 24 & $E_4$ & $\b_{24}{=}\a_4$  & $u_{24}{=}4$ &1 &$ simple $\\
  \hline
 \end{tabular}
\end{tiny}
\newpage\noindent
$\a<\a'<\gamma<\b'$. As for how to get the quantum root vector
$E_{\gamma}$ in $U^+$, we have to add $(r,s)$-bracketings on each
standard Lyndon word obeying the defining rule as
$E_\gamma:=[E_\alpha,
E_\beta]_{\langle\omega_\beta',\omega_\alpha\rangle}=E_\alpha
E_\beta-\langle \omega_\beta',\omega_\alpha\rangle E_\beta E_\alpha$
for $\alpha, \gamma,\beta\in\Phi^+$ with minimal
pair $(\alpha,\beta)$ for $\gamma$. Briefly, we denote these positive quantum root
vectors  with the above ordering as in Table I.
Note that the first decomposition for each positive root in Table I
is a minimal pair.  It will be checked in Lemma \ref{indep of decom} below that the $(r,s)$-brackets for each standard Lyndon words is independent of the choice of the Lyndon decomposition. In particular, we have
$E_{\b_k}=[E_{\b_i},E_{\b_j}]_{\langle\omega_{\beta_j}',\omega_{\beta_i}\rangle}$
if the subscripts Lyndon word associated to $\beta_k$ decomposes into
the co-standard factorization $u_k=u_iu_j$.

For examples, $E_{\b_2}=E_{12}=
E_1E_2-\langle \omega_{\a_2}',\omega_{\a_1}\rangle E_2E_1=
E_1E_2-a_{12}E_2E_1=E_1E_2-s^2E_2E_1$, where root $\b_2$ has the only decomposition $\b_2=\a_1+\a_2$; while  $E_{\b_{20}}=E_{2343}=
E_{234}E_3-\langle \omega_{\a_3}',\omega_{\a_2+\a_3+\a_4}\rangle E_3E_{234}=
E_{234}E_3-a_{23}a_{33}a_{43}E_3E_{234}=E_{234}E_3-sE_3E_{234}$, where the root decomposition $\a_2+2\a_3+\a_4=(\a_2+\a_3+\a_4)+\a_3$, corresponding to the co-standard factorization of
 Lyndon words, i.e., $234\, 3$.
We find that $\b_{20}$ has the other two decompositions
$\b_{20}=(\a_2+\a_3)+(\a_3+\a_4)=(\a_2+2\a_3)+\a_4$, which result in more complicated relations among quantum root vectors $E_{2343}, E_{23}, E_{34}, E_{233}, E_{4}$.

According to \cite{K, Ro, L}, we have

\begin{theorem}\label{BaseE}
$\left.\Bigl\{\, E_{\b_{24}}^{n_{24}}E_{\b_{23}}^{n_{23}}\cdots
E_{\b_{1}}^{n_{1}}
\,\right| \b_1<\b_2<\cdots<\b_{23}<\b_{24}\in \Phi^+,
n_{i}\in  \textit{N}\Bigr\}$ forms a convex
PBW-type Lyndon basis of the algebra $U^+$.\hfill\qed
\end{theorem}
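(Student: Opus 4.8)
The plan is to deduce the statement from the general PBW-basis theory for character Hopf algebras, equivalently for Nichols algebras of diagonal type, as developed in \cite{K, Ro, L}, the type-$F_4$-specific content being the explicit Lyndon words and quantum root vectors recorded in Table I. First I would note that, under the genericity hypotheses $r^2\neq s^2$ and $r^3\neq s^3$, the subalgebra $U^+$ is generated by $E_1,\dots,E_4$ modulo precisely the $(r,s)$-Serre relations $(F5)$, and is graded by the positive root lattice $Q^+=\bigoplus_i\mathbb{N}\alpha_i$ through $\deg E_i=\alpha_i$. This presents $U^+$ as a connected graded braided Hopf algebra of diagonal type whose braiding matrix is $(a_{ij})$; the skew-dual pairing of Proposition \ref{P22} provides the nondegenerate form whose radical is the defining ideal, so that $U^+$ has no homogeneous relations beyond those generated by $(F5)$ and coincides with the associated Nichols algebra.

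I would then apply Kharchenko's PBW theorem: an algebra of this type possesses a PBW basis given by ordered products of powers of PBW generators, indexed by the \emph{hard} Lyndon words of the free monoid on $\{1,2,3,4\}$ taken in lexicographic order, each generator being the iterated $\mathbf{q}$-bracketing (super-letter) attached to its word. The degree map carries each hard word to an element of $Q^+$, and I would identify the hard words with the good Lyndon words of type $F_4$ in the sense of Lalonde--Ram \cite{LR}. Since $F_4$ is of finite type every positive root has multiplicity one, so this identification yields exactly $24$ hard words, one for each root; these are the words $u_1,\dots,u_{24}$ of Table I with degrees $\beta_1,\dots,\beta_{24}$.

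Next I would match Kharchenko's super-letters with the quantum root vectors $E_{\beta_i}$ defined inductively along minimal pairs. The crucial supporting fact is Lemma \ref{indep of decom}: the $(r,s)$-bracketing of a standard Lyndon word does not depend on the choice of Lyndon decomposition, so the inductive rule $E_\gamma=[E_\alpha,E_\beta]_{\langle\omega_\beta',\omega_\alpha\rangle}$ along a minimal pair agrees with the co-standard factorization and hence with the canonical super-letter. This delivers the spanning half of the theorem, the straightening being furnished by the general theory. For convexity, the reduced expression for $w_0$ fixed in \S 2.3 induces the ordering $\beta_1<\cdots<\beta_{24}$ on $\Phi^+$, which coincides with the lexicographic order on the associated Lyndon words; consequently any product $E_{\beta_i}E_{\beta_j}$ with $i>j$ rewrites, by Levendorskii--Soibelman-type relations, as a combination of ordered monomials whose supports lie strictly between $\beta_j$ and $\beta_i$, which is exactly the convex normal form asserted.

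The main obstacle I anticipate lies in this identification step: matching Kharchenko's abstract hard super-letters with the concretely defined $E_{\beta_i}$ and confirming that the good Lyndon words for $F_4$ are precisely the $24$ words in Table I. The combinatorics in this non-simply-laced case is genuinely intricate, as the multiply decomposable roots such as $\beta_8$, $\beta_{13}$ and $\beta_{14}$ show; one must verify case by case that every standard Lyndon word is good and that no extraneous hard word arises. Once the correct generating set is secured, linear independence follows formally from Kharchenko's theorem, so no separate independence argument is needed.
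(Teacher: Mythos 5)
Your proposal is correct and follows essentially the same route as the paper, which establishes the theorem by appealing to the Kharchenko--Rosso--Leclerc theory of PBW bases indexed by (good/hard) Lyndon words, with Table~I supplying the $24$ words and Lemma~\ref{indep of decom} guaranteeing that the minimal-pair bracketing agrees with the co-standard factorization. The paper gives no further argument beyond this citation, so your filled-in identification of hard super-letters with the $E_{\beta_i}$ is simply a more explicit rendering of the same proof.
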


\begin{defi}\label{d24}
Let $\tau$ be the $\mathbb{Q}$-algebra anti-automorphism of
$U_{r,s}(F_4)$ such that $\tau(r)=s$, $\tau(s)=r$,
$\tau(\langle \om_i',\om_j\rangle^{\pm1})=\langle
\om_j',\om_i\rangle^{\mp1}$, and
\begin{gather*}
\tau(E_i)=F_i, \quad \tau(F_i)=E_i, \quad \tau(\om_i)=\om_i',\quad
\tau(\om_i')=\om_i.
\end{gather*}
Then $\mathcal B'=\tau(\mathcal B)$ with those induced defining
 relations from $\mathcal B$, and those cross relations in
$(\textrm{F2})$---$(\textrm{F4})$ are antisymmetric with respect to
$\tau$.\hfill\qed
\end{defi}

Using $\tau$ to $U^+$, we get those negative quantum root
vectors in $U^-$: $F_1, F_{12}, F_{123}$, $F_{1233}, F_{12332},
F_{1234}, F_{12343}, F_{12343123432}, F_{123432}, F_{1234323}, F_{123434}, F_{1234342}, F_{12343423}$,  $
F_{123434233}, F_{1234342332}, F_2, F_{23}, F_{233}, F_{234}, F_{2343}, F_{23434},
F_{3}, F_{34}, F_{4}$. We then have the following

\begin{coro}\label{BaseF}
$\left.\Bigl\{\, F_{\b_{1}}^{n_{1}}F_{\b_{2}}^{n_{2}}\cdots
F_{\b_{24}}^{n_{24}}
\,\right|\, \b_1<\b_2<{\cdots}<\b_{24}\in \Phi^+,
n_{i}\in  \textit{N}\Bigr\}$ forms
a convex PBW-type basis of the algebra $U^-$.\hfill\qed
\end{coro}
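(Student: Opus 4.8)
The plan is to deduce the corollary directly from Theorem \ref{BaseE} by transporting the convex PBW-type Lyndon basis of $U^+$ across the anti-automorphism $\tau$ of Definition \ref{d24}. I would first record that $\tau$ is a bijective $\mathbb{Q}$-algebra anti-automorphism of all of $U_{r,s}(F_4)$, and that, since $\tau(E_i)=F_i$ while $U^+$ (resp. $U^-$) is generated by the $E_i$ (resp. $F_i$), the restriction $\tau|_{U^+}$ is a $\mathbb{Q}$-linear isomorphism of $U^+$ onto $U^-$. Because a linear isomorphism carries any basis to a basis, it suffices to compute the image under $\tau$ of the basis furnished by Theorem \ref{BaseE}.

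Next I would confirm that $\tau$ sends each positive quantum root vector to the corresponding negative one, i.e.\ $\tau(E_{\b_i})=F_{\b_i}$ for $1\le i\le 24$; this is in fact how the $F_{\b_i}$ were introduced in the discussion preceding the statement. The only point requiring care is the behaviour of $\tau$ on the structure constant of the $(r,s)$-bracket. Writing $\alpha=\sum_i a_i\alpha_i$ and $\beta=\sum_j b_j\alpha_j$, using $\langle\om_i',\om_j\rangle=a_{ji}$ from Proposition \ref{P22} together with the rule $\tau(\langle\om_i',\om_j\rangle^{\pm1})=\langle\om_j',\om_i\rangle^{\mp1}$ and the multiplicativity of the pairing, one computes
$$\tau\bigl(\langle\om_\beta',\om_\alpha\rangle\bigr)=\prod_{i,j}a_{ji}^{-b_ja_i}=\langle\om_\alpha',\om_\beta\rangle^{-1}.$$
Applying the anti-automorphism $\tau$ to the defining identity $E_\gamma=E_\alpha E_\beta-\langle\om_\beta',\om_\alpha\rangle E_\beta E_\alpha$ and inserting this, one finds $\tau(E_\gamma)=F_\beta F_\alpha-\langle\om_\alpha',\om_\beta\rangle^{-1}F_\alpha F_\beta$, which is precisely (up to the normalisation fixed in the definition of the $F_{\b_i}$) the $(r,s)$-bracket of $F_\alpha$ and $F_\beta$. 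An induction along the standard Lyndon tree, with the simple-root cases $\tau(E_i)=F_i$ as base, then yields $\tau(E_{\b_i})=F_{\b_i}$ for all $i$. I note that even a nonzero scalar discrepancy at this step would be harmless, since it does not affect linear independence or spanning.

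Finally, because $\tau$ reverses products while $\tau(x^n)=\tau(x)^n$ for a single element, applying $\tau$ to a typical basis monomial of Theorem \ref{BaseE} gives
$$\tau\bigl(E_{\b_{24}}^{n_{24}}E_{\b_{23}}^{n_{23}}\cdots E_{\b_1}^{n_1}\bigr)=F_{\b_1}^{n_1}F_{\b_2}^{n_2}\cdots F_{\b_{24}}^{n_{24}},$$
so that the decreasing order in $U^+$ is turned into exactly the increasing order displayed in the statement. Since $\tau|_{U^+}$ is a linear isomorphism onto $U^-$, the family $\{\,F_{\b_1}^{n_1}\cdots F_{\b_{24}}^{n_{24}}\,\}$ is the image of a basis and is therefore a basis of $U^-$, which proves the corollary. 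The only genuinely computational ingredient is the identification $\tau(E_{\b_i})=F_{\b_i}$, and this is the main obstacle; but the essential content is simply that $\tau$ is a bijective anti-homomorphism intertwining the two triangular halves of $U$ and reversing the convex order on $\Phi^+$.
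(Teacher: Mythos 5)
Your proposal is correct and is exactly the argument the paper has in mind: the paper introduces the $F_{\b_i}$ precisely as the images $\tau(E_{\b_i})$ and then states the corollary with no further proof, the point being that $\tau|_{U^+}$ is a linear bijection onto $U^-$ which, being an anti-homomorphism, reverses the ordered monomials of Theorem \ref{BaseE} into the increasing-order monomials displayed. Your additional verification of how $\tau$ acts on the $(r,s)$-bracket structure constants is a sensible elaboration but does not change the route.
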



\section{Commutation relations and homogeneous central elements}
\medskip
\noindent {\it 3.1.  Some  formulas.}
Let $Q^+=\sum\limits_{i=1}^4\mathbb{Z}^+\alpha_i$, for each
$\zeta=\sum\limits_{i=1}^4\zeta_i\alpha_i\in Q^+$, denote
$$U_\zeta^+=\{\,e\in U^+\mid\omega_{\eta} e\omega_{\eta}^{-1}=\langle\omega_{\zeta}^\prime,\omega_{\eta}\rangle
e,\quad \omega_{\eta}^\prime e
\omega_{\eta}'^{-1}=\langle\omega_{\eta}^\prime,\omega_{\zeta}\rangle^{-1}e,\quad
\forall\; \eta\in\sum\limits_{i=1}^4\mathbb{Z}\alpha_i\,\}.$$ For
each $x\in U_\zeta^+, y\in U_{\eta}^+$, we defined
$[x,y]_{p_{xy}}=xy-p_{xy}yx$, where
$p_{xy}=\langle\omega_\eta^\prime,\omega_\zeta\rangle$ in subsection 2.3. We shall
write $[x,y]_{p_{xy}}$ simply as $[x,y]$ in this paper.

We extend the above operation to $U^+$ by natural linear extension.
According to \cite{K}, we have the following generalized Jacobi
identity and Leibniz rules, which will be applied in the
calculations of commutation relations.
\begin{gather*}
\begin{split}
[[x,y]_{p_{xy}},z]_{p_{xz}p_{yz}}&=[x,[y,z]_{p_{yz}}]_{p_{xy}p_{xz}}+
p_{zy}^{-1}[[x,z]_{p_{xz}},y]_{p_{xy}p_{zy}}\\& \quad +(p_{yz}-p_{zy}^{-1})[x,z]_{p_{xz}}y, \\
\end{split}\tag*{$(J)$}\label{J}
\end{gather*}
\begin{gather*}
\begin{split}
[x,yz]_{p_{xy}p_{xz}}&=[x,y]_{p_{xy}}z+p_{xy}y[x,z]_{p_{xz}},\\
[xy,z]_{p_{xz}p_{yz}}&=p_{yz}[x,z]_{p_{xz}}y+x[y,z]_{p_{yz}}.
\end{split}\tag*{$(L)$}\label{L}
\end{gather*}

\begin{lemm}\label{indep of decom}
The $(r,s)$-brackets for each standard Lyndon words is independent of the choice of the Lyndon decomposition.
\end{lemm}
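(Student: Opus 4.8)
The plan is to induct on the height $h(\b_k)$ of the positive root, equivalently on the length of the standard Lyndon word $u_k$. Simple roots and length-two words are immediate, since then $u_k$ admits a single Lyndon factorization. For the inductive step I fix a root carrying two Lyndon decompositions $u_k=u_iu_j=u_{i'}u_{j'}$ (the entries tagged $(M)$ and $(L)$ in Table I) and show that the corresponding bracketings $[E_{\b_i},E_{\b_j}]$ and $[E_{\b_{i'}},E_{\b_{j'}}]$ coincide. Reading off the standard Lyndon tree, any two such factorizations are joined by elementary re-associations, so it suffices to treat a single move: splitting the inner factor and passing from $[[E_{\b_a},E_{\b_b}],E_{\b_j}]$ to $[E_{\b_a},[E_{\b_b},E_{\b_j}]]$ (or its mirror) by one application of the Jacobi identity $(J)$.

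Each such application of $(J)$ yields one principal term and two correction terms. The surviving term --- the principal one, or, when the outer letter commutes past the split-off factor, one of the corrections --- is exactly the other bracketing, and its coefficient is automatically correct: because the skew-pairing of Proposition \ref{P22} is multiplicative in each slot, the product of $p$'s thrown up by $(J)$ collapses to the structure constant $\langle\om'_{\b_j},\om_{\b_i}\rangle$ prescribed for the target in Table I, so the two expressions agree as elements and not merely up to scalar. It then remains to annihilate the other $(J)$-terms. The anomalous term $(p_{yz}-p_{zy}^{-1})[x,z]_{p_{xz}}y$ drops out whenever the split-off letter is orthogonal to the outer one, for then $p_{yz}=p_{zy}^{-1}=1$, and the term $[x,[y,z]_{p_{yz}}]$ drops out for the same reason since $[y,z]=0$. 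This disposes at once of the roots $\b_3,\b_4,\b_6,\b_7,\b_{12},\b_{19}$, where every bracket removed by $(J)$ is one of the non-adjacency vanishings $[E_1,E_3]=[E_1,E_4]=[E_2,E_4]=0$ or a one-step consequence such as $[E_{12},E_4]=0$ or $[E_1,E_{34}]=0$.

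For the remaining roots $\b_{10},\b_{11},\b_{13},\b_{14},\b_{21}$ the correction terms involve brackets such as $[E_{1234},E_4]$, $[E_{12343},E_3]$, $[E_{234},E_4]$ and $[E_{123434},E_3]$, whose weights ($1112$, $1131$, $0112$, $1132$ in the $\a$-basis) are not positive roots. I would show each vanishes by first stripping off, through $(J)$, the simple-root letters orthogonal to the remainder --- for instance $[E_1,E_3]=0$ peels the $E_1$-head off $E_{123434}$ --- until the bracket lands inside the rank $\le 3$ parabolic generated by $E_2,E_3,E_4$, where a weight that is not a root is killed by the Serre relations $(F5)_4$ through $(F5)_6$; the cubic relation $(F5)_4$ enters precisely for $\b_{14}$, where $E_3$ is bracketed against the word twice. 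Equivalently, each such vanishing is already forced by the convexity in Theorem \ref{BaseE}, since no sum of positive roots strictly between the relevant pair in the convex order realizes that weight. The main obstacle is exactly this middle layer: for the tall roots $\b_{13},\b_{14}$ the reduction to a Serre relation threads several nested $(J)$ and Leibniz $(L)$ steps through genuinely nonzero intermediate brackets, and the accompanying products of the $a_{ij}$ must be kept consistent at each stage --- once these auxiliary vanishings are secured, collapsing $(M)$ onto $(L)$ is a one-line application of $(J)$.
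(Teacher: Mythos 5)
Your overall strategy for the easy cases matches the paper's: identify the words with more than one Lyndon factorization, apply the Jacobi identity $(J)$ once, and kill the correction terms using non-adjacency vanishings such as $[E_1,E_3]=[E_1,E_4]=[E_2,E_4]=0$. (Incidentally, $\b_{10},\b_{12},\b_{13}$ have a unique factorization per Table I and need no checking, while the list that must be checked is $\b_3,\b_4,\b_6,\b_7,\b_{11},\b_{14},\b_{19},\b_{21}$; your coverage is a harmless superset.) The genuine gap is in your treatment of the auxiliary vanishings needed for the tall roots, above all $[E_{2343},E_3]=0$ and $[E_{23434},E_3]=0$, which are the crux of the case $u_{14}=12343423\cdot 3=123434\cdot 233$. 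Your stated mechanism --- strip down to the parabolic on $E_2,E_3,E_4$ and then observe that ``a weight that is not a root is killed by the Serre relations'' --- is false as a principle: the weight space $U^+_{\a_2+3\a_3+\a_4}$ is far from zero (it contains $E_{233}E_{34}$, $E_{234}E_3^2$, etc.), so the fact that $\a_2+3\a_3+\a_4$ is not a root does not make the element $[E_{2343},E_3]$ vanish, and no single Serre relation $(F5)_4$--$(F5)_6$ produces it directly. Your fallback --- that the vanishing ``is already forced by the convexity in Theorem \ref{BaseE}'' --- is circular in this paper's logical order: the convexity of commutators is Theorem \ref{t8}, which is proved \emph{after} and \emph{by means of} the explicit relations that presuppose the root vectors are well defined, i.e.\ presuppose this very lemma.

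What the paper actually does at this point is a comparison-of-two-expansions argument that your proposal does not contain: it computes $[E_{233},E_{34}]$ in two different ways via $(J)$ (splitting off the trailing $E_3$ of $E_{233}$ in one expansion and the trailing $E_4$ of $E_{34}$ in the other), equates the two results, and extracts $(r^2+rs+s^2)[E_{2343},E_3]=0$, hence $[E_{2343},E_3]=0$ since $r^3\ne s^3$; the same trick applied to $[E_{2343},E_{34}]$ then gives $(r+s)[E_{23434},E_3]=0$. These non-degeneracy conditions $r^2+rs+s^2\ne 0$ and $r+s\ne 0$ are essential and nowhere invoked in your argument. Until you supply a correct derivation of these two vanishings (or an equivalent non-circular substitute), the case of $\b_{14}$ --- and with it the lemma --- is not proved.
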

\begin{proof}[{ \bf Proof.}]
It is enough to check this for $E_{\beta_3}=E_{123}$, $E_{\beta_4}=E_{1233}$, $E_{\beta_6}=E_{1234}$, $E_{\beta_7}=E_{12343}$, $E_{\beta_{11}}=E_{123434}$,  $E_{\beta_{14}}=E_{123434233}$, $E_{\beta_{19}}=E_{234}$, $E_{\beta_{21}}=E_{23434}$, since the others are either simple or have unique Lyndon decomposition.

For $E_{\beta_3}$, we have to show that $[E_1,E_{23}]=[E_{12},E_3]$. We have $[E_{12},E_3]=[[E_1,E_2],E_3]=[E_1,[E_2,E_3]]=[E_1,E_{23}]$, this is clear by $[E_1,E_3]=0$ and \ref{J}.

For $E_{\beta_4}$, we have to show $[E_{123},E_3]=[E_1,E_{233}]$. By the statement for $E_{\beta_3}$, we have
$$[E_{123},E_3]=[[E_1,E_{23}],E_3]=[E_1,[E_{23},E_3]]=[E_1,E_{233}]$$
by $[E_1,E_3]=0$ and \ref{J}.

For $E_{\beta_{19}}$,  we have $[E_{23},E_4]=[E_2,E_{34}]$ similarly by $[E_2,E_4]=0$ and \ref{J}.

For $E_{\beta_6}$, we have to show $[E_{123},E_4]=[E_{12},E_{34}]=[E_1,E_{234}]$. By the construction of $E_{\beta_3}$, we have $[E_{123},E_4]=[[E_{12},E_3],E_4]=[E_{12},E_{34}]$, since $[E_{12},E_4]=0$ and \ref{J}. By the construction of  $E_{\beta_{19}}$, we have $[E_{12},E_{34}]=[E_1,[E_2,E_{34}]]=[E_1,E_{234}]$, since $[E_1,E_{34}]=0$ and \ref{J}.

$E_{\beta_7}$,  we have to show that $[E_{1234},E_3]=[E_1,E_{2343}]$. By the construction of $E_{\beta_6}$, we have $[E_{1234},E_3]=[[E_1,E_{234}],E_3]=[E_1,[E_{234},E_3]]$, since $[E_1,E_3]=0$ and \ref{J}.

For $E_{\beta_{21}}$, we have to show $[E_{2343},E_4]=[E_{234},E_{34}]$. By \ref{J}, it is enough to show that $[E_{234},E_4]=0$. In fact, $[E_{34},E_4]=0$ by Serre relation (F5), hence $[E_{234},E_4]=[E_2,[E_{34},E_4]]=0$ by \ref{J}.

For $E_{\beta_{11}}$, we have to show that $[E_{12343},E_4]=[E_{1234},E_{34}]=[E_1,E_{23434}]$. To show the first equality, it is enough to show that $[E_{1234},E_4]=0$ by \ref{J} and the construction of $E_{\beta_7}$. By \ref{J}, we have $[E_{1234},E_4]=[[E_1,E_{234}],E_4]=[E_1,[E_{234},E_4]]=0$, since $[E_{234},E_4]=0$, which is already proved. By the construction of $E_{\beta_7}$ and $E_{\beta_{21}}$, we have $[E_{12343},E_4]=[[E_1,E_{2343}],E_4]=[E_1,[E_{2343},E_4]]=[E_1,E_{23434}]$, since $[E_1,E_4]=0$.

For $E_{\beta_{14}}$, we have to show that $[E_{12343423},E_3]=[E_{123434},E_{233}]$. By \ref{J} and $[E_1,E_3]=0$, it is enough to show that $[E_{123434},E_3]=0$. By the statement for $E_{\beta_{11}}$ and \ref{J}, we have $[E_{123434},E_3]=[[E_1,E_{23434}],E_3]=[E_1,[E_{23434},E_3]]$. To prove our claim, it suffices to show that $[E_{23434},E_3]=0$. It follows from \ref{J} that $[E_{23},E_{34}]=rE_{234}E_3-s^2E_3E_{234}$. On the one hand, we have
\begin{equation*}
\begin{split}
[E_{233},E_{34}]&=s[[E_{23},E_{34}],E_3]+(r-s)[E_{23},E_{34}]E_3\\
&=r[E_{23},E_{34}]E_3-rsE_3[E_{23},E_{34}]\\
&=r^2E_{234}E_3^2-rs^2E_3E_{234}E_3-r^2sE_3E_{234}E_3+rs^3E_3^2E_{234}\\&
=r^2E_{2343}E_3-rs^2E_3E_{2343},
\end{split}
\end{equation*}
by $[E_3,E_{34}]=0$.

On the other hand, we have
\begin{equation*}
\begin{split}
[E_{233},E_{34}]
&=-r[[E_{233},E_4],E_3]+(r-s)[E_{233},E_4]E_3\\
&=-r(r+s)[E_{2343},E_3]+(r^2-s^2)E_{2343}E_3\\
&=-s(r+s)E_{2343}E_3+r^2(r+s)E_3E_{2343}.
\end{split}
\end{equation*}

So we get  $[E_{2343},E_3]=0$, since $r^2+rs+s^2\neq0$.

It follows from \ref{J} and $[E_3,E_{34}]=0$ that
\begin{equation*}
\begin{split}
[E_{2343},E_{34}]&=s[E_{23434},E_3]+(r-s)E_{23434}E_3\\
&=rE_{23434}E_3-sE_3E_{23434},
\end{split}
\end{equation*}
and
\begin{equation*}
\begin{split}
[E_{2343},E_{34}]
&=-r[E_{23434},E_3]+(r-s)E_{23434}E_3\\
&=-sE_{23434}E_3+rE_3E_{23434}.
\end{split}
\end{equation*}
Therefore $[E_{23434},E_3]=0$, since $r+s\neq0.$

This completes the proof.
\end{proof}

\smallskip

\noindent {\it 3.2. Commutation relations in $U^{+}$.}
We make
more efforts to explicitly calculate some commutation
relations, which are useful for determining central elements in
subsection {\it 3.3} and  the integrals in Section 7.

\begin{lemm} \label{l1} The following relations hold in $U^{+}:$

\smallskip

$(1)\quad [E_{1},E_{12}]=[E_{12},E_2]=[E_2,E_{23}]=[E_{233},E_{3}]=[E_{3},E_{34}]=[E_{34},E_{4}]=0;
$

$(2)\quad [E_{1233},E_{23}]=[E_{123},E_2]=[E_1,E_{123}]=[E_1,E_{1233}]=0;
$

$(3)\quad [E_{123},E_{23}]=E_2E_{1233}-s^2E_{1233}E_2;
$

$(4)\quad [E_{12},E_{123}]=[E_{23},E_{233}]=[E_{123},E_{1233}]=[E_{1233},E_{12332}]=0;$

$(5)\quad [E_{2},E_{233}]=r(r-s)E_{23}^2;$

$(6)\quad [E_{12},E_{23}]=(r^2-s^2)E_{123}E_2;$

$(7)\quad [E_{12},E_{233}]=r^2s^2E_{12332}+r(r-s)(E_{123}E_{23}+s^2E_{23}E_{123}).$
\end{lemm}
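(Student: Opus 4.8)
The plan is to prove the seven relations of Lemma~\ref{l1} in sequence, since the later parts reuse the earlier ones as lemmas-within-a-lemma; the whole engine is the generalized Jacobi identity \ref{J} together with the Leibniz rules \ref{L}, the elementary vanishing commutators coming from the Serre relations $(F5)$, and the fact (Lemma~\ref{indep of decom}) that each root vector may be written via any of its Lyndon decompositions. First I would dispatch parts $(1)$ and $(2)$: each identity there asserts that a certain $(r,s)$-bracket vanishes. For a pair $(\alpha,\beta)$ with $\alpha+\beta\notin\Phi^+$ (equivalently, no arrow in the Lyndon tree/Table~I connects them) the bracket is forced to be zero by degree reasons once one expands via \ref{J} and reduces to primitive Serre relations; for instance $[E_1,E_{12}]=0$ and $[E_{12},E_2]=0$ encode the two type-$A_2$ Serre relations $(F5)_1,(F5)_2$, while $[E_2,E_{23}]=0$, $[E_{233},E_3]=0$, $[E_3,E_{34}]=0$, $[E_{34},E_4]=0$ come from $(F5)_3$, $(F5)_4$, $(F5)_5$, $(F5)_6$ respectively after expanding the quantum root vectors into their defining brackets. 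The relations in $(2)$ and $(4)$ of the same vanishing flavor I would obtain by feeding the results of $(1)$ into \ref{J}: e.g. $[E_1,E_{123}]=[E_1,[E_{12},E_3]]$ collapses because $[E_1,E_{12}]=0$ and $[E_1,E_3]=0$, and similarly for $[E_{123},E_2]$, $[E_{1233},E_{23}]$, $[E_1,E_{1233}]$.

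Next I would treat the nonzero-valued identities $(3)$, $(5)$, $(6)$, which are the load-bearing computations. For $(5)$, $[E_2,E_{233}]=r(r-s)E_{23}^2$, I would write $E_{233}=[E_{23},E_3]$ and apply the Leibniz rule \ref{L} (or \ref{J}) to $[E_2,[E_{23},E_3]]$, using $[E_2,E_{23}]=0$ from $(1)$ and $[E_2,E_3]=E_{23}$; the surviving terms collect into a scalar multiple of $E_{23}^2$, and tracking the structure constants $p_{xy}=\langle\omega'_\eta,\omega_\zeta\rangle$ for the relevant roots (read off from the matrix $(a_{ij})$, giving the $rs$-powers) yields the coefficient $r(r-s)$. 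Identity $(6)$, $[E_{12},E_{23}]=(r^2-s^2)E_{123}E_2$, proceeds the same way: expand $E_{12}=[E_1,E_2]$, use \ref{J} to move the bracket past $E_{23}$, invoke $[E_1,E_{23}]=E_{123}$ (the $E_{\beta_3}$ construction from Lemma~\ref{indep of decom}) and $[E_2,E_{23}]=0$, and compute the prefactor. Identity $(3)$, $[E_{123},E_{23}]=E_2E_{1233}-s^2E_{1233}E_2$, is the subtlest of the three: I would expand $E_{123}=[E_1,E_{23}]$ or $E_{123}=[E_{12},E_3]$ and push the outer bracket through via \ref{J}, using the already-established $(2)$ (namely $[E_{1233},E_{23}]=0$ and the $E_{123}$-relations) to prune terms, and recognize the right-hand side as $[E_2,E_{1233}]$ up to the convex-ordering sign $s^2$. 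A useful consistency check here is that $(3)$, $(4)$, and $(7)$ must be mutually compatible, since $E_{12332}$, $E_{1233}$, and $E_{123}$ are linked in Table~I.

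Finally, $(7)$, $[E_{12},E_{233}]=r^2s^2E_{12332}+r(r-s)(E_{123}E_{23}+s^2E_{23}E_{123})$, is where I expect the main obstacle: it mixes a genuine new root vector $E_{12332}$ with two inhomogeneous-looking products $E_{123}E_{23}$ and $E_{23}E_{123}$, so there is no single clean telescoping. The strategy is to write $E_{233}=[E_{23},E_3]$ and apply \ref{J} to $[E_{12},[E_{23},E_3]]$, splitting into $[[E_{12},E_{23}],E_3]$ plus correction terms; the first summand then gets rewritten using $(6)$ to introduce $E_{123}E_2$, and $[E_{123}E_2,E_3]$ is handled by the Leibniz rule \ref{L}, producing $E_{123}E_{23}$ together with a term that reorganizes — via $(3)$ and the definition $E_{12332}=[E_{1233},E_2]$ — into $E_{12332}$ and the $E_{23}E_{123}$ contribution. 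The real difficulty is bookkeeping: one must correctly track every structure constant $p_{xy}$ through two nested applications of \ref{J}/\ref{L} and correctly normal-order the resulting degree-$5$ monomials so that the nonquadratic pieces recombine into $E_{12332}$ with coefficient exactly $r^2s^2$ and the quadratic remainder matches $r(r-s)(E_{123}E_{23}+s^2E_{23}E_{123})$. I would verify the final coefficients against the $q$-number identities forced by the $A_2$ and $B_2$ subsystems, and against Lemma~\ref{indep of decom} to ensure the answer is independent of which Lyndon decomposition of $E_{233}$ or $E_{12}$ I started from.
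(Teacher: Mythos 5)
Your proposal is correct in outline, but it takes a genuinely different route from the paper. The paper's own proof of this lemma occupies two lines: part $(1)$ is read off directly from the $(r,s)$-Serre relations $(F5)$ (e.g. $[E_1,E_{12}]=E_1^2E_2-(r^2+s^2)E_1E_2E_1+r^2s^2E_2E_1^2=0$ once the structure constant $\langle\omega_{\alpha_1+\alpha_2}',\omega_{\alpha_1}\rangle=a_{11}a_{12}=r^2$ is inserted), and parts $(2)$--$(7)$ are not recomputed at all: the authors observe that $\alpha_1,\alpha_2,\alpha_3$ span a $B_3$ subsystem of $F_4$ with the same structure constants, so the corresponding commutation relations are imported wholesale from their earlier type-$B$ paper [HW2]. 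What the citation buys is brevity and the reassurance that these identities were already verified in a published source; what your from-scratch derivation buys is self-containedness, and it is in fact exactly the style of $(J)$/$(L)$ computation the paper itself performs for the genuinely new $F_4$ relations in Lemmas 3.3--3.7 (your computation of $(5)$, for instance, reproduces $[E_2,E_{233}]=-p_{zy}^{-1}[[E_2,E_3],E_{23}]=r(r-s)E_{23}^2$ correctly).

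Two cautions if you carry the computation out. First, your blanket claim that brackets $[E_\alpha,E_\beta]$ with $\alpha+\beta\notin\Phi^+$ vanish ``by degree reasons'' is false as a general principle --- $(5)$ itself is a counterexample, since $\alpha_2+(\alpha_2+2\alpha_3)$ is not a root yet the bracket equals $r(r-s)E_{23}^2\neq 0$; the vanishing statements in $(2)$ and $(4)$ each require an actual computation (or the convexity Theorem 3.9, which however is proved \emph{after} and \emph{from} this lemma, so it cannot be invoked here). Second, your proposed logical order has a hidden interdependence: the natural $(J)$-expansion of $[E_{12},E_{23}]$ in $(6)$ produces the term $p_{zy}^{-1}[E_{123},E_2]$, while the natural expansions of $[E_{123},E_2]$ in $(2)$ (via either Lyndon factorization of $E_{123}$) reintroduce $[E_{12},E_{23}]$. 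So $(2)$ and $(6)$ must be solved as a small linear system (or one must choose the expansions more carefully) rather than strictly in sequence. This is a bookkeeping matter, not a fatal flaw, but it is precisely the kind of tedium the paper avoids by citing [HW2].
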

\begin{proof}[{\bf Proof.}]
(1)  follows directly from the $(r,s)$-Serre relation $(F5)$.

Note that $B_3$ is the subsystem of $F_4$, the commutator relations in $B_3$
still hold in $F_4$, therefore, we get  $(2)$---$(7)$ by
\cite{HW2}.
\end{proof}

Combining   \ref{J}  and  Lemma \ref{l1},
we get the  following lemmas:

\begin{lemm}\label{l2} The following relations hold in $U^{+}:$

\smallskip

      $(1) \quad [E_{1234},E_4]=[E_{1234},E_2]=[E_{1},E_{1234}]=[E_{12},E_{1234}]=[E_{1},E_{12343}]=0;$

      $(2)\quad [E_{23},E_{34}]=rE_{234}E_3-s^2E_3E_{234};$

      $(3)\quad [E_{233},E_4]=(r+s)E_{2343};$

      $(4)\quad [E_{1233},E_4]=(r+s)E_{12343};$

      $(5)\quad [E_{1234},E_{233}]=[E_{1233},E_{234}]=(r+s)E_{23}E_{12343}-s(r+s)E_{12343}E_{23};$

      $(6)\quad [E_{2},E_{2343}]=r^2E_{234}E_{23}-sE_{23}E_{234}=r(r-s)E_{234}E_{23};$

      $(7)\quad [E_{234},E_4]=[E_{2},E_{234}]=[E_{23},E_{234}]=0;$

      $(8)\quad [E_{12332},E_3]=0;$

      $(9)\quad [E_{123},E_{34}]=rE_{1234}E_3-s^2E_3E_{1234};$

      $(10)\quad [E_{1234},E_{23}]=E_2E_{12343}-s^2E_{12343}E_2;$

      $(11)\quad [E_{12332},E_{4}]=(r+s)E_{123432};$

     $(12)\quad [E_{12332},E_{34}]=r^2s^2(r+s)[E_{3},E_{123432}];$

      $(13)\quad [E_{123},E_{2343}]=r^2sE_{12343}E_{23}-s^2E_{23}E_{12343}+(r-s)(E_{1233}E_{234}+s^2E_{233}E_{1234});$

      $(14)\quad [E_{1234},E_{234}]=[E_{2},E_{123434}];$

      $(15)\quad [E_{1234},E_{2343}]=rE_{12343}E_{234}-s^2E_{234}E_{12343}.$

\end{lemm}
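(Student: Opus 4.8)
The plan is to reduce every identity in $(1)$--$(15)$ to the already-established low-degree relations of Lemma \ref{l1} by repeated use of the generalized Jacobi identity \ref{J} and the Leibniz rules \ref{L}. The first preparatory step is purely combinatorial: for each pair of root vectors occurring in $(1)$--$(15)$ I would read off the weights $\zeta,\eta\in Q^+$ from Table I and compute the braiding scalar $p_{xy}=\langle\omega_\eta',\omega_\zeta\rangle$ as the corresponding product of entries $a_{ij}$ of the matrix of structure constants. With these scalars in hand, each bracket $E_\gamma$ can be rewritten via its minimal-pair (co-standard) factorization as a nested $(r,s)$-bracket of the generators $E_1,\ldots,E_4$, so that every relation becomes an identity among iterated brackets to which \ref{J} and \ref{L} apply directly; Lemma \ref{indep of decom} guarantees this rewriting is unambiguous.

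The calculation should be organized by dependency rather than by the order listed. I would establish the vanishing commutators first --- those in $(1)$, $(7)$, $(8)$, together with the auxiliary facts $[E_{234},E_4]=0$, $[E_{2343},E_3]=0$, $[E_{23434},E_3]=0$ already obtained inside the proof of Lemma \ref{indep of decom} --- since these are exactly what force the Jacobi correction term $(p_{yz}-p_{zy}^{-1})[x,z]_{p_{xz}}y$ to drop out elsewhere. A typical instance: writing $[E_{1234},E_4]=[[E_1,E_{234}],E_4]$ and feeding $[E_{234},E_4]=0$ and $[E_1,E_4]=0$ into \ref{J} kills both surviving terms. Once the vanishing relations are in place, the single-term identities $(2)$--$(6)$, $(9)$--$(11)$, $(14)$, $(15)$ each follow from one or two applications of \ref{J}/\ref{L}: for example $[E_{233},E_4]=[[E_{23},E_3],E_4]$ collapses by \ref{J}, using $[E_3,E_4]=E_{34}$, $[E_{23},E_4]=E_{234}$ and $[E_{234},E_3]=E_{2343}$, to a scalar multiple of $E_{2343}$, and tracking the scalars yields the factor $(r+s)$ of $(3)$.

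The genuinely laborious cases are the multi-term identities $(12)$ and $(13)$, and these are where the two-parameter asymmetry $p_{xy}\neq p_{yx}^{-1}$ bites: the correction term in \ref{J} no longer vanishes and is precisely the source of the $(r-s)$- and $r^2s^2(r+s)$-type coefficients. For these I would expand the outer bracket, substitute the quadratic relations of Lemma \ref{l1}$(3)$,$(5)$--$(7)$ to push the resulting products of three or more root vectors back onto the chosen PBW monomials, and then collect terms; here it is essential to keep the weight of every intermediate bracket explicit so that the correct power of $r$ and $s$ is attached to each summand.

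I expect the main obstacle to be bookkeeping rather than any single conceptual step: tracking the asymmetric scalars $p_{xy}$ through several nested applications of \ref{J}, and sequencing the fifteen identities so that each is proved only after its prerequisites, so that the required cancellations (which quietly use $r+s\neq0$, $r^2+rs+s^2\neq0$, and the standing assumptions $r^2\neq s^2$, $r^3\neq s^3$) are actually available. Getting this dependency order right, and not double-counting the Jacobi correction terms, is the delicate part.
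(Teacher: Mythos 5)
Your proposal is correct and follows essentially the same route as the paper, which itself disposes of Lemma \ref{l2} with the single remark that it follows by combining the generalized Jacobi identity \ref{J} (together with \ref{L}) with Lemma \ref{l1} and the auxiliary vanishings $[E_{234},E_4]=[E_{2343},E_3]=[E_{23434},E_3]=0$ established inside Lemma \ref{indep of decom}. Your dependency-ordered plan (vanishing commutators first, then single-term identities, then the multi-term ones with careful tracking of the asymmetric scalars $p_{xy}$) is exactly the bookkeeping the paper leaves implicit; for instance your sketch of $(3)$ does reproduce the coefficient $(r+s)$ once $[E_{23},E_{34}]=rE_{2343}+s(r-s)E_3E_{234}$ is substituted.
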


\begin{lemm} \label{l3} The following relations hold in $U^{+}:$

\smallskip

$(1)\quad [E_{233},E_{234}]=0;$

$(2)\quad [E_{23},E_{2343}]=(r-s)E_{233}E_{234};$

$(3)\quad [E_{234},E_{2343}]=0;$

$(4)\quad [E_{2343},E_3]=0;$

 $(5)\quad [E_{23434},E_3]=0;$

 $(6)\quad [E_{23434},E_4]=0;$

 $(7)\quad [E_{234},E_{23434}]=0;$

 $(8)\quad [E_{2343},E_{23434}]=0;$

 $(9)\quad [E_{233},E_{2343}]=0;$

 $(10)\quad [E_{233},E_{23434}]=r(r^2-s^2)E_{2343}^2;$

 $(11)\quad [E_{1233},E_{23434}]=r(r^2-s^2)(E_{12343}E_{2343}+s^2E_{2343}E_{12343})+r^2s^2E_{123434233}.$
\end{lemm}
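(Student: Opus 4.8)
The plan is to prove all eleven identities by the mechanism already used for Lemma \ref{l2}: expand each composite quantum root vector through its minimal-pair (equivalently co-standard) factorization as an iterated $(r,s)$-bracket, then collapse the nested brackets with the generalized Jacobi identity \ref{J} and the Leibniz rules \ref{L}, feeding in the relations of Lemmas \ref{l1}, \ref{l2} and \ref{indep of decom}. Concretely I would use the factorizations $E_{233}=[E_{23},E_3]$, $E_{234}=[E_{23},E_4]$, $E_{2343}=[E_{234},E_3]$ and $E_{23434}=[E_{2343},E_4]=[E_{234},E_{34}]$ recorded in Table I, together with the vanishings $[E_{233},E_3]=[E_3,E_{34}]=[E_{34},E_4]=0$ of Lemma \ref{l1}(1), $[E_{23},E_{233}]=0$ of Lemma \ref{l1}(4), and $[E_{234},E_4]=[E_{23},E_{234}]=0$ of Lemma \ref{l2}(7).

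I would begin by disposing of $(4)$ and $(5)$, which were already obtained inside the proof of Lemma \ref{indep of decom} (from $r^2+rs+s^2\ne0$ and $r+s\ne0$ respectively), so that only a reference is needed. Next I would settle the remaining vanishing relations. For $(6)$, writing $E_{23434}=[E_{234},E_{34}]$ and applying \ref{J} with $z=E_4$ turns $[E_{23434},E_4]$ into brackets governed by $[E_{34},E_4]=0$ and $[E_{234},E_4]=0$, all of which vanish. The relations $(1)$, $(3)$, $(7)$, $(8)$, $(9)$ are handled the same way: each factorization, combined with $[E_{23},E_{234}]=0$, $[E_{234},E_4]=0$ and $(4)$, makes \ref{J} telescope the bracket to zero.

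The genuinely computational identities $(2)$ and $(10)$ come next. For $(2)$ I would apply \ref{J} to $[E_{23},[E_{234},E_3]]$, annihilate the term $[E_{23},E_{234}]=0$, use $[E_{23},E_3]=E_{233}$, and read the coefficient $(r-s)$ off the surviving term. For $(10)$ I would expand $E_{23434}=[E_{234},E_{34}]$, invoke $(1)$ together with the explicit value $[E_{233},E_{34}]=r^2E_{2343}E_3-rs^2E_3E_{2343}$ computed inside Lemma \ref{indep of decom}, and recognize the surviving monomial as $E_{2343}^2$. Throughout, the point demanding genuine care is the bookkeeping of the scalars $p_{xy}=\langle\omega'_\eta,\omega_\zeta\rangle$ introduced in subsection {\it 3.1}: each use of \ref{J} contributes three separately weighted terms, and the asserted coefficients surface only after the right cancellations, so I would evaluate every $p_{xy}$ from the structure matrix $(a_{ij})$ before substituting. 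Since several of these subsystem relations are interdependent (for instance $(1)$ and $(2)$ each naturally express through the other), I would sequence them carefully, or alternatively import them from the type-$B$ computations of \cite{HW2}, as was done for Lemma \ref{l1}.

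The main obstacle is $(11)$, the only relation that leaves the rank-three subsystem on $\{2,3,4\}$ and reaches into the full $F_4$ through the index $1$. Here I would factor $E_{1233}=[E_1,E_{233}]$ and apply \ref{J} to $[[E_1,E_{233}],E_{23434}]$. The key inputs are $[E_1,E_{23434}]=E_{123434}$ and $[E_1,E_{2343}]=E_{12343}$, both read off Table I since $1\,23434$ and $1\,2343$ are standard Lyndon words; the already-proven $(10)$ for the inner bracket $[E_{233},E_{23434}]=r(r^2-s^2)E_{2343}^2$; and the identification $[E_{123434},E_{233}]=E_{123434233}$. Applying \ref{L} to $[E_1,E_{2343}^2]$ produces precisely $E_{12343}E_{2343}+s^2E_{2343}E_{12343}$, matching the stated right-hand side. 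The delicate step is confirming that the three \ref{J}-weights recombine to leave exactly the coefficients $r(r^2-s^2)$ and $r^2s^2$, with the intermediate monomials of the form $E_{123434}E_{233}$ either cancelling or being reabsorbed into $E_{123434233}$; verifying this cancellation is where I expect the bulk of the effort to lie.
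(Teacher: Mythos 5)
Your overall strategy --- expanding each composite root vector through its Table~I factorization and collapsing with \ref{J} and \ref{L}, quoting (4) and (5) from the proof of Lemma \ref{indep of decom}, and reducing (11) to (10) via $E_{1233}=[E_1,E_{233}]$, $[E_1,E_{2343}]=E_{12343}$ and $[E_{123434},E_{233}]=E_{123434233}$ --- is exactly the paper's, and your treatments of (3), (6)--(9) and (11) match it in substance. For (10) you take a slightly different but plausible route (through $E_{23434}=[E_{234},E_{34}]$ and the value of $[E_{233},E_{34}]$ from Lemma \ref{indep of decom}), where the paper gets it more cleanly from (9) together with $[E_{233},E_4]=(r+s)E_{2343}$ of Lemma \ref{l2}(3).

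The genuine gap is in (1), and hence in (2), which depends on it. Expanding $[E_{23},E_{2343}]=[E_{23},[E_{234},E_3]]$ by \ref{J} does \emph{not} telescope: after killing $[E_{23},E_{234}]=0$ one is left with a multiple of $[E_{233},E_{234}]$ plus a multiple of $E_{233}E_{234}$, i.e.\ (2) is expressed through the unknown (1) --- precisely the circularity you flag. But ``sequencing carefully'' cannot break this circle, and your fallback of importing these two relations from \cite{HW2} does not apply either: the indices $\{2,3,4\}$ span a $C_3$ subsystem of $F_4$, not the $B_3$ subsystem $\{1,2,3\}$ for which Lemma \ref{l1} quotes \cite{HW2}. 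The missing idea is to expand the \emph{same} bracket $[E_{23},E_{2343}]$ a second, independent way using the other expression $E_{2343}=(r+s)^{-1}[E_{233},E_4]$ supplied by Lemma \ref{l2}(3); comparing the two expansions forces $(r^2+rs+s^2)[E_{233},E_{234}]=0$, whence (1) by the standing hypothesis $r^3\neq s^3$, and then (2) drops out of the first expansion. That your plan nowhere locates where the assumptions $r^2\neq s^2$ and $r^3\neq s^3$ are consumed is a symptom of this omission --- they enter exactly in these comparison steps (and in the one already made for (4) and (5)).
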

\begin{proof}[{\bf Proof.}]
(1) \& (2): Observe that:
\begin{equation*}
\begin{split}
[E_{23},E_{2343}]&=-r[E_{233},E_{234}]+(r-s)E_{233}E_{234}\\
&=rE_{234}E_{233}-sE_{233}E_{234}.
\end{split}
\end{equation*}
Using  Lemma \ref{l2} (3) and \ref{J}, we have another expansion:
\begin{equation*}
\begin{split}
[E_{23},E_{2343}]
&=(r+s)^{-1}[E_{23},[E_{233},E_4]]\\
&=(r+s)^{-1}(-r^2[E_{234},E_{233}]+(r^2-s^2)E_{234}E_{233})\\
&=(r+s)^{-1}(r^2E_{233}E_{234}-s^2E_{234}E_{233}).
\end{split}
\end{equation*}

Combining the above two expansions, we have
 $[E_{233},E_{234}]=0$, since $r^2+rs+s^2\neq0$.

(3): Using  Lemma \ref{l2} (3),   we have
\begin{equation*}
[E_{234},E_{2343}]=(r+s)^{-1}[E_{234},[E_{233},E_4]],
\end{equation*}
which is $0$ by \ref{J}, (2) \& Lemma \ref{l2} (7).

(4) and (5) are already proved in the proof of Lemma \ref{indep of decom}.

(6) follows directly from \ref{l2} (7) and
$[E_{34}, E_4]=0$.

(7) follows directly from (3) and Lemma \ref{l2} (7).

(8): Noting that
$[E_{2343},E_{23434}]=[[E_{234},E_3],E_{23434}]$, and using
\ref{J}, (5) and (7), we obtain $[E_{2343},E_{23434}]=0$.

(9) follows directly from  (2) and Lemma \ref{l1} (1).

(10) follows from (9) and Lemma \ref{l2} (3).

(11) follows  from  \ref{J} and  (10).
\end{proof}

\begin{lemm}\label{l4} The following relations hold in $U^{+}:$

\smallskip
$(1)\quad [E_{123},E_{234}]=rE_{1234}E_{23}-s^2E_{23}E_{1234};$

 $(2)\quad [E_{123},E_{1234}]=0;$

 $(3)\quad [E_{1233},E_{1234}]=0;$

 $(4)\quad [E_{12332},E_{1234}]=0;$

 $(5)\quad [E_{1234},E_{12343}]=0;$

 $(6)\quad [E_{12343},E_{233}]=0;$

 $(7)\quad [E_{1234},E_{123432}]=0;$

 $(8)\quad [E_{1234},E_{1234323}]=rE_{123432}E_{12343}-rs^2E_{12343}E_{123432};$

 $(9)\quad [E_{12343123432},E_3]=[E_{12343},E_{1234323}]=0;$

 $(10)\quad [E_{1234},E_{12343123432}]=0;$

 $(11)\quad [E_{12343},E_{12343123432}]=0;$

  $(12)\quad [E_{12332},E_{12343}]=0;$

  $(13)\quad [E_{1233},E_{12343}]=0;$

  $(14)\quad [E_{1234323},E_{3}]=0;$

   $(15)\quad [E_{12},E_{2343}]=rs^2(r-s)E_{234}E_{123}+r(r-s)E_{1234}E_{23}+r^2s^2E_{123432};$

   $(16)\quad [E_{12343},E_{2343}]=(r+s)^{-1}(E_{233}E_{123434}-s^2E_{123434}E_{233});$

  $(17)\quad [E_{1233},E_{123434}]=r(r^2-s^2)E_{12343}^2;$

   $(18)\quad [E_{1233},E_{1234342}]=r(r^2-s^2)(r^{-2}E_{123432}E_{12343}+E_{12343}E_{123432})$

 \hskip4cm  $+\,r^2s^2E_{123434}E_{12332}-r^{-2}E_{12332}E_{123434};$

   $(19)\quad [E_{12332},E_{123434}]=-rs(r+s)[E_{123432},E_{12343}].$
\end{lemm}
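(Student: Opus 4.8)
The plan is to prove all nineteen identities by the single mechanism already exploited in Lemmas \ref{l1}--\ref{l3}: repeatedly apply the generalized Jacobi identity \ref{J} and the Leibniz rules \ref{L}, feeding each bracket back into the minimal-pair (equivalently, co-standard Lyndon) presentations $E_{\beta_k}=[E_{\beta_i},E_{\beta_j}]$ recorded in Table I, and simplifying the structure constants $p_{xy}=\langle\omega'_\eta,\omega_\zeta\rangle$ (for $x\in U^+_\zeta$, $y\in U^+_\eta$) against the matrix $(a_{ij})$. Because the parts are interdependent, the first step is to fix a dependency-consistent order: the vanishing relations $(2)$--$(7)$, $(9)$--$(14)$ are established first and then substituted into the structural relations $(1)$, $(8)$, $(15)$--$(19)$. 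A useful organising principle, already visible in the proof of Lemma \ref{l1}, is that $E_2,E_3,E_4$ generate a $B_3$-subsystem: from the $(L)$-column of Table I each root vector $E_{1u}$ is the Lyndon bracket $[E_1,E_u]$, and since $[E_1,E_3]=[E_1,E_4]=0$, most of the $E_{1u}$-identities in $(15)$--$(19)$ are the $\text{ad}_{l}\,E_1$-images of the rank-three identities of Lemma \ref{l3}; for instance $(17)$ is the exact analogue of Lemma \ref{l3}$(10)$ obtained by prepending the $\alpha_1$-strand. This reduces much of the bookkeeping to computations already in hand.

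The vanishing relations are treated by expanding the larger argument through a minimal pair so that \ref{J} rewrites the bracket as a combination of two strictly shorter brackets that are already known to be zero; where a single expansion is inconclusive, I would compute the same bracket in two different ways and compare, as in Lemma \ref{l3}$(1),(2)$ and in Lemma \ref{indep of decom}, concluding via $r^2+rs+s^2\neq0$ or $r+s\neq0$. The top-root relations $(9)$--$(11)$ are more delicate: using $E_{12343123432}=[E_{12343},E_{123432}]$ and the several decompositions of the highest root $\beta_8$ listed in Table I, one reduces through \ref{J} to shorter brackets of $E_{1234},E_{12343},E_{123432}$ with $E_3$, which are controlled by $(8)$, $(14)$ and the fact that $[E_{12343},E_3]=0$.

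The structural heart, and the step I expect to be the main obstacle, is the pair $(17),(19)$ (with $(17)$ then feeding $(18)$), where the coefficient bookkeeping is unforgiving. For $(17)$ I would write $E_{123434}=[E_{12343},E_4]$ and apply \ref{J} with $x=E_{1233}$, $y=E_{12343}$, $z=E_4$: the term $[[E_{1233},E_{12343}],E_4]$ vanishes by $(13)$; the correction term $(p_{yz}-p_{zy}^{-1})[x,z]y$ drops out because $\beta_7\perp\alpha_4$ forces the symmetric pairing $p_{yz}p_{zy}=(rs^{-1})^{2(\beta_7,\alpha_4)}$ to equal $1$; and using $[E_{1233},E_4]=(r+s)E_{12343}$ (Lemma \ref{l2}$(4)$) together with the self-bracket $[E_{12343},E_{12343}]=(1-rs^{-1})E_{12343}^2$ one is left with exactly $r(r^2-s^2)E_{12343}^2$. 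For $(19)$ the same orthogonality $\beta_7\perp\alpha_4$ again annihilates the correction term in
\begin{align*}
[E_{12332},E_{123434}]&=[[E_{12332},E_{12343}],E_4]-p_{zy}^{-1}[[E_{12332},E_4],E_{12343}]\\
&\quad-(p_{yz}-p_{zy}^{-1})[E_{12332},E_4]\,E_{12343},
\end{align*}
where the first bracket is zero by $(12)$, $[E_{12332},E_4]=(r+s)E_{123432}$ by Lemma \ref{l2}$(11)$, and $p_{zy}^{-1}=\langle\omega'_{\beta_7},\omega_{\alpha_4}\rangle^{-1}=rs$, producing precisely $-rs(r+s)[E_{123432},E_{12343}]$.

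By contrast, $(18)$ is the place where the correction term must \emph{survive}. Writing $E_{1234342}=[E_{123434},E_2]$ and applying \ref{J} with $z=E_2$, the factor $[[E_{1233},E_{123434}],E_2]$ is computed from $(17)$ by a Leibniz expansion of $[E_{12343}^2,E_2]$ (yielding the combination $r^{-2}E_{123432}E_{12343}+E_{12343}E_{123432}$), while now $\beta_{11}\not\perp\alpha_2$, so the surviving correction term is exactly what generates the mixed monomials $r^2s^2E_{123434}E_{12332}$ and $-r^{-2}E_{12332}E_{123434}$. In short, the genuine difficulty is not conceptual but the relentless tracking of the bicharacter $p_{xy}=\langle\omega'_\eta,\omega_\zeta\rangle$ through the non-simply-laced entries $r^2s^{-2},s^2,rs^{-1},s,\dots$ of $(a_{ij})$, where a single exponent slip propagates into every later identity. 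I would therefore verify each $p_{xy}$ and each symmetric pairing $p_{xy}p_{yx}=(rs^{-1})^{2(\text{wt}\,x,\,\text{wt}\,y)}$ directly against the weights in Table I before assembling the final coefficients, using the orthogonality criterion to discard whole correction terms in advance whenever two weights are perpendicular.
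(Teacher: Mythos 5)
Your overall method is the paper's method: everything is driven by the generalized Jacobi identity \ref{J}, the Leibniz rules \ref{L}, the minimal/Lyndon decompositions of Table I, and careful tracking of the pairings $p_{xy}=\langle\omega'_\eta,\omega_\zeta\rangle$; and the three derivations you actually carry out, (17), (18), (19), are correct (I checked the pairings: $\langle\omega'_{\alpha_4},\omega_{\beta_7}\rangle\langle\omega'_{\beta_7},\omega_{\alpha_4}\rangle=1$ so the correction term does vanish in (17) and (19), $p_{zy}^{-1}=rs$ in (19), and in (18) the surviving correction term with $\langle\omega'_{\beta_{11}},\omega_{\alpha_2}\rangle=s^2$ produces exactly the mixed monomials $r^2s^2E_{123434}E_{12332}-r^{-2}E_{12332}E_{123434}$). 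These are in fact more explicit than the paper, which disposes of (17)--(19) with one-line citations.

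The one concrete flaw is your proposed order of attack. You assert that a dependency-consistent order puts all the vanishing relations (2)--(7), (9)--(14) before the structural relations (1), (8), (15)--(19). That order does not exist: the paper's proof of (2) hinges on substituting the explicit expansion $[E_{123},E_{234}]=rE_{1234}E_{23}-s^2E_{23}E_{1234}$ from (1) into $[E_1,[E_{123},E_{234}]]$ to close the self-consistent equation for $[E_{123},E_{1234}]$, and the obvious alternative (expanding $E_{1234}=[E_{123},E_4]$ by a minimal pair and applying \ref{J}) collapses to a tautology $0=0$, so (1) genuinely precedes (2), and hence the whole cascade (3)--(7), (12)--(14). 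Likewise (9) is proved from (8): the coefficient $r^{-1}s^{-2}[[E_{1234},E_{1234323}],E_3]$ must be evaluated using the explicit right-hand side of (8) before the $(1+rs^{-1})[E_{12343},E_{1234323}]=0$ trick applies. So the structural relations are not terminal outputs but load-bearing intermediate steps; the correct order interleaves, e.g.\ $(1)\to(2)\to(3)\to(4),(5),(13)\to(6)\to(7),(14)\to(8)\to(9)\to(10),(11),(12)\to(15)$--$(19)$. A second, smaller caution: your remark that (17) is ``the $\mathrm{ad}_l\,E_1$-image'' of Lemma \ref{l3}(10) is only a formal analogy --- $\mathrm{ad}_l\,E_1$ applied to a bracket distributes by Leibniz and does not send $[E_{233},E_{23434}]$ to $[E_{1233},E_{123434}]$ --- but since you then give an independent correct derivation of (17) via \ref{J}, (13) and Lemma \ref{l2}(4), nothing is lost.
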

\begin{proof}[{\bf Proof.}]
(1): Using \ref{J}, we have
\begin{equation*}
\begin{split}
[E_{123},E_{234}]&=[[E_{123},E_{23}],E_4]-r[E_{1234},E_{23}]+(r-s)E_{1234}E_{23}\\
&=[[E_2,E_{1233}],E_4]+rsE_{23}E_{1234}-sE_{1234}E_{23}\quad  (\mathrm{Lemma}\
\ref{l1}\ (2))\\
&=(r+s)[E_2,E_{12343}]+rsE_{23}E_{1234}-sE_{1234}E_{23}\quad  (\mathrm{Lemma}\
\ref{l2}\ (4))\\
&=rE_{1234}E_{23}-s^2E_{23}E_{1234} \quad (\mathrm{Lemma}\
\ref{l2}\ (10)).
\end{split}
\end{equation*}

(2): We have
\begin{equation*}
\begin{split}
[E_{123},E_{1234}]&=[[E_{123},E_1],E_{234}]-r^2[[E_{123},E_{234}],E_1]-(s^2{-}r^2)[E_{123},E_{234}]E_1\\
&=(1{-}r^2s^{-2})[E_{123}E_1,E_{234}]+s^{-2}E_1[E_{123},E_{234}]-s^2[E_{123},E_{234}]E_1\\
&=(1{-}r^2s^{-2})E_{123}E_{1234}-r^2[E_{123},E_{234}]E_1+s^{-2}E_1[E_{123},E_{234}]\\
&=(1{-}r^2s^{-2})E_{123}E_{1234}+s^{-2}[E_1,[E_{123},E_{234}]]\\
&=(1{-}r^2s^{-2})E_{123}E_{1234}+s^{-2}(r{+}s)[E_{12},E_{12343}]+rs^{-1}[E_1,E_{23}E_{1234}]\\
&\quad -s^{-1}[E_1,E_{1234}E_{23}]\\
&=(1{-}r^2s^{-2})E_{123}E_{1234}+s^{-2}(r{+}s)(-r[E_{123},E_{1234}]+(r{-}s)E_{123}E_{1234})\\
&\quad+rs^{-1}E_{123}E_{1234}-r^2s^{-1}E_{1234}E_{123}.
\end{split}
\end{equation*}
Therefore,
$[E_{123},E_{1234}]=0.$

(3): We have
\begin{equation*}
\begin{split}
&[E_{1233},E_{1234}]\\
&=(1-r^2s^{-2})[E_{1233}E_1,E_{234}]-r^2[[E_{1233},E_{234}],E_1]+(r^2-s^2)[E_{1233},E_{234}]E_1\\
&=(1-r^2s^{-2})E_{1233}E_{1234}+s^{-2}[E_1,[E_{1233},E_{234}]]\\
&=(1-r^2s^{-2})E_{1233}E_{1234}+s^{-2}(r+s)(E_{123}E_{12343}-r^2sE_{12343}E_{123})\\
&=(1-r^2s^{-2})E_{1233}E_{1234}+s^{-2}(r+s)[E_{123},E_{12343}]\\
&=(1-r^2s^{-2})E_{1233}E_{1234}-s^{-2}(r+s)(r[E_{1233},E_{1234}]+(r-s)E_{1233}E_{1234})\\
&=-rs^{-2}(r+s)[E_{1233},E_{1234}]
\end{split}
\end{equation*}
by (2) and Lemma \ref{l2} (1), (5).
Therefore,
$[E_{1233},E_{1234}]=0$, since $1+rs^{-1}+r^2s^{-2}\neq0$.

(4) follows directly from (3) and Lemma \ref{l2} (1).

(5): Using  Lemma \ref{l2} (4), we have
$$[E_{1234},E_{12343}]=(r+s)^{-1}[E_{1234},[E_{1233},E_4]],$$
which is $0$, by  (3) and Lemma \ref{l2}  (1).

(6): Using $[E_{233},E_3]=0$, we have
\begin{equation*}
\begin{split}
&[E_{12343},E_{233}]\\
&=(1-r^2s^{-2})[E_{1234},E_3E_{233}]+r^{-2}[[E_{1234},E_{233}],E_3]+(s^{-2}-r^{-2})[E_{1234},E_{233}]E_3\\
&=(1-r^2s^{-2})E_{12343}E_{233}+s^{-2}[[E_{1234},E_{233}],E_3]\\
&=(1-r^2s^{-2})E_{12343}E_{233}-s^{-1}(r+s)E_{12343}E_{233}+rs^{-2}(r+s)E_{233}E_{12343}\\
&=-rs^{-2}(r+s)[E_{12343},E_{233}]
\end{split}
\end{equation*}
and  $(1+rs^{-1}+r^2s^{-2})[E_{12343},E_{233}]=0$.  So  $[E_{12343}, E_{233}]=0$.

(7) follows directly from (5) and Lemma \ref{l2} (1).

(8) follows directly from  \ref{J} and (7).

(9): Using (6) and (8), we have
\begin{equation*}
\begin{split}
&[E_{12343},E_{1234323}]\\
&=(1-rs^{-1})[E_{1234},E_3E_{1234323}]+r^{-2}s^{-1}[[E_{1234},E_{1234323}],E_3]\\
&\quad +(s^{-2}r^{-1}-r^{-2}s^{-1})[E_{1234},E_{1234323}]E_3\\
&=(1-rs^{-1})E_{12343}E_{1234323}+r^{-1}s^{-2}[[E_{1234},E_{1234323}],E_3]\\
&=(1-rs^{-1})E_{12343}E_{1234323}-rs^{-1}[E_{12343},E_{1234323}]+(rs^{-1}-1)E_{12343}E_{1234323}\\
&=-rs^{-1}[E_{12343},E_{1234323}],
\end{split}
\end{equation*}
 then $(1+rs^{-1})[E_{12343},E_{1234323}]=0$. Therefore, $[E_{12343}, E_{1234323}]=0$.

Using \ref{J}   and $[E_{12343},E_3]=0$,  we have:
$$[E_{12343123432},E_3]=[E_{12343},E_{1234323}].$$

(10)-(19): Using \ref{J}, (10) follows  from  (5)   and (7);
(11) follows directly from  (9) and  (10);
(12) follows directly from  (4)  and  Lemma \ref{l2}  (8);
(13) follows directly from  (3) and $[E_{1233},E_{3}]=0$;
(14) follows directly from  (6) and  $[E_{12343},E_{3}]=0$;
(15) follows directly from (1) and Lemma \ref{l2} (6);
(16) follows directly from (6) and  Lemma \ref{l2} (3);
(17) follows directly from  (13) and  Lemma \ref{l2} (4);
(18) follows directly from (17);  (19) follows directly from  (12) and  Lemma \ref{l2} (11).
\end{proof}

\begin{lemm}\label{l5} The following relations hold in $U^{+}:$
\smallskip

$(1)\quad [E_{123432},E_2]=0;$

$(2)\quad [E_{12343123432},E_2]=r^{-1}s^{-2}(r-s)E_{123432}^2;$

$(3)\quad [E_{12343123432},E_{123432}]=0;$

$(4)\quad [E_{1234323},E_2]=0;$

$(5)\quad [E_{123432},E_{1234323}]=0;$

$(6)\quad [E_{12343},E_{234}]=(r+s)^{-1}(E_{23}E_{123434}-s^2E_{123434}E_{23});$

$(7)\quad [E_{1234323},E_4]=rs(r+s)^{-1}E_{12343423};$

$(8)\quad [E_{123432},E_{34}]=s^2(r+s)^{-1}(r^2E_{3}E_{1234342}-E_{1234342}E_{3});$

$(9)\quad [E_{1234},E_{123434}]=0;$

$(10)\quad [E_{123432},E_{123434}]=0;$

$(11)\quad [E_{1234323},E_{123434}]=0;$

$(12)\quad [E_{12343},E_{123434}]=0;$

$(13)\quad [E_{1234342},E_4]=0;$

$(14)\quad [E_{12343},E_{1234342}]=(s^{-2}-r^{-2})E_{123432}E_{123434};$

$(15)\quad [E_{123434},E_{1234342}]=0;$

$(16)\quad [E_{1234323},E_{34}]=r^{2}sE_{3}E_{12343423}-sE_{12343423}E_{3};$

$(17)\quad [E_{123432},E_{234}]=(r+s)^{-1}(r^2E_{23}E_{1234342}-s^2E_{1234342}E_{23}).$
\end{lemm}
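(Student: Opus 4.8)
The plan is to reduce every one of the seventeen identities to commutators already established in Lemmas \ref{l1}--\ref{l4} (and among the generators $E_i$) by expanding the long quantum root vectors through their minimal-pair definitions from Table I and collapsing the resulting nested brackets with the Jacobi identity \ref{J} and the Leibniz rules \ref{L}. The substitutions I would use repeatedly are $E_{123432}=[E_{12343},E_2]$, $E_{1234323}=[E_{123432},E_3]$, $E_{123434}=[E_{12343},E_4]$, $E_{1234342}=[E_{123434},E_2]$, $E_{12343423}=[E_{1234342},E_3]$ and $E_{12343123432}=[E_{12343},E_{123432}]$, so that each bracket in the statement becomes a bracket of strictly lower vectors to which a prior result applies.

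The identities split into a vanishing group $(1),(3),(4),(5),(9)$--$(13),(15)$ and an explicit-formula group $(2),(6),(7),(8),(14),(16),(17)$; I would treat them in a dependency-respecting order so that every invocation of a prior item is genuinely prior, roughly $(1)\to(4),(5)\to(6),(9)\to(2),(3)\to(7),(8),(16)\to(10)$--$(13),(15)\to(14),(17)$. A point worth flagging is that a few base cases are of the shape $(\text{ad}\,E_i)^2$ applied to a root vector --- for instance $(1)$ is $[[E_{12343},E_2],E_2]$ --- where a single pass of \ref{J} is tautological. These I would instead settle by rewriting through a neighbouring relation and importing a rank-three subsystem relation: for $(1)$, rewrite $E_{123432}=(r+s)^{-1}[E_{12332},E_4]$ via Lemma \ref{l2}(11) and apply \ref{J}, whereupon $[E_2,E_4]=0$ and the $B_3$-relation $[E_{12332},E_2]=0$ (the vanishing of the bracket of the highest root vector of the subsystem $\langle\alpha_1,\alpha_2,\alpha_3\rangle$ with $E_2$, available through Lemma \ref{l1} and \cite{HW2}) kill both surviving terms. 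With $(1)$ in hand, $(4)$ and $(5)$ follow by expanding $E_{1234323}=[E_{123432},E_3]$ and feeding in $(1)$, and the remaining vanishing relations come by the same mechanism from $E_{123434}=[E_{12343},E_4]$ and $E_{1234342}=[E_{123434},E_2]$ together with Lemmas \ref{l2} and \ref{l4}.

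For the explicit formulas I keep the nonzero inner brackets. Relation $(2)$ expands $[E_{12343123432},E_2]=[[E_{12343},E_{123432}],E_2]$ by \ref{J}; substituting $(1)$ and $E_{123432}=[E_{12343},E_2]$ collapses the right-hand side to the stated multiple of $E_{123432}^2$, and $(6),(7),(8),(16),(17)$ produce the $E_{123434}$-, $E_{12343423}$- and $E_{1234342}$-expressions in the same way, while $(14)$ yields $(s^{-2}-r^{-2})E_{123432}E_{123434}$. The device that recurs throughout --- exactly as in the proofs of Lemmas \ref{l3} and \ref{l4} --- is that after full expansion the target commutator $[X,Y]$ reappears on the right with a scalar coefficient $c\neq1$; one then rearranges to $(1-c)[X,Y]=(\text{already-known terms})$ and divides, using that the standing hypotheses $r^2\neq s^2$ and $r^3\neq s^3$ force $r+s\neq0$, $r^2+rs+s^2\neq0$ and $1+rs^{-1}+r^2s^{-2}\neq0$.

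The main obstacle is not any individual identity but the bookkeeping of the skew-symmetry scalars and the acyclicity of the order. Every use of \ref{J} or \ref{L} carries coefficients $p_{xy}=\langle\omega'_\eta,\omega_\zeta\rangle$ that must be evaluated as the correct products of structure constants $a_{ij}$ over the weights $\zeta,\eta$ of the two factors, and for these high weight vectors a single mistaken $a_{ij}$ would spoil the nonvanishing of $1-c$ on which the cancellations in $(2)$, $(14)$ and the vanishing arguments rest. I would therefore tabulate, for each of the seventeen items, the exact list of prior results it consumes, and verify the processing order against that table before carrying out the computations; the genuinely delicate cases are the $(\text{ad}\,E_i)^2$ base cases and the self-referential cancellations, where there is no slack in the argument.
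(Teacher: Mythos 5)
Your overall strategy coincides with the paper's: expand each long root vector through its minimal-pair (co-standard) factorization, collapse with $(J)$ and $(L)$, and close the self-referential cases by isolating $(1-c)[X,Y]$ with $c\neq 1$ guaranteed by $r^2\neq s^2$, $r^3\neq s^3$. Two points differ in substance. First, for item (1) the paper does not detour through $E_{123432}=(r{+}s)^{-1}[E_{12332},E_4]$; it expands one level further, $[E_{123432},E_2]=[[[E_{1234},E_3],E_2],E_2]$, and uses $[E_{1234},E_2]=0$ (Lemma \ref{l2}(1)) to push both $E_2$'s onto $E_3$, landing on the Serre relation $[[E_3,E_2],E_2]=0$. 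Your alternative is workable, but it leans on $[E_{12332},E_2]=0$, which is not among the relations recorded in Lemma \ref{l1}; you would need to import it explicitly from the $B_3$ case. Second, and more seriously, your dependency order places (6) strictly before (7) and asserts both are "produced in the same way" as (2); but neither admits an independent one-pass derivation. Expanding $[E_{12343},E_{234}]$ via $E_{234}=[E_{233},E_4]/(r{+}s)$ reintroduces $[E_{1234323},E_4]$, and expanding (7) reintroduces (6). The paper resolves this by computing $[[E_{1233},E_4],E_{234}]$ in two different ways, obtaining two independent linear relations in the pair of unknowns $[E_{12343},E_{234}]$ and $[E_{1234323},E_4]$, and solving the resulting $2\times 2$ system; without this simultaneous-equations device your acyclic ordering stalls at exactly this point. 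The remaining items ((8)--(17)) do go through essentially as you describe, with the long self-cancelling computations for (10) and (15) being the only places requiring the full weight of the scalar bookkeeping you rightly flag.
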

\begin{proof}[{\bf Proof.}]
(1): Using  \ref{J} and Lemma \ref{l2} (1), we have
\begin{equation*}
\begin{split}
[E_{123432},E_2]
&=[[[E_{1234},E_3],E_2],E_2]\\
&=[[E_{1234},[E_3,E_2]],E_2]\\
&=[E_{1234},[[E_3,E_2],E_2]]\\
&=0.
\end{split}
\end{equation*}

(2): Using  \ref{J} and  (1), we have
\begin{equation*}
\begin{split}
[E_{12343123432},E_2]&=r^{-2}[E_{123432},E_{123432}]+(s^{-2}-r^{-2})E_{123432}^2\\&=r^{-1}s^{-2}(r-s)E_{123432}^2.
\end{split}
\end{equation*}

(3): 
Observe that
$$[E_{123432},E_{12343123432}]=E_{123432}E_{12343123432}-r^2E_{12343123432}E_{123432},$$
and
\begin{equation*}
\begin{split}
[E_{123432},E_{12343123432}]
&=[E_{12343},[E_2,E_{12343123432}]] \quad (\mathrm{(J)\  and\  Lemma}\  \ref{l4} (11))\\
&=[E_{12343}, -r^2s^2[E_{12343123432},E_2]]\\
&=r(s{-}r)[E_{12343},E_{123432}^2]  \quad (2)\\
&=r(s{-}r)E_{12343123432}E_{123432}+s^{-1}(s{-}r)E_{123432}E_{12343123432}
\quad (L).
\end{split}
\end{equation*}
Therefore,
$[E_{12343123432},E_{123432}]=0$.

(4): Using (1) and $[E_2,E_{23}]=0$, we have
\begin{equation*}
\begin{split}
&[E_{1234323},E_2]
\\&=(1-r^2s^{-2})[E_{12343},E_{23}E_2]+r^{-2}[E_{123432},E_{23}]+(s^{-2}-r^{-2})E_{123432}E_{23}\\
&=(1-r^2s^{-2})E_{1234323}E_2+s^{-2}[E_{123432},E_{23}]\\
&=(1-r^2s^{-2})E_{1234323}E_2-r^2s^{-2}[E_{1234323},E_{2}]+(r^2s^{-2}-1)E_{1234323}E_2\\
&=-r^2s^{-2}[E_{1234323},E_{2}].
\end{split}
\end{equation*}
Therefore, $[E_{1234323},E_{2}]=0$, since  $1+r^2s^{-2}\neq0$.

(5) follows directly from (4), Lemma \ref{l4} (9) and
\ref{J}.

(6) \&  (7):  On the one hand,
\begin{equation*}
[E_{12343},E_{234}]=[E_{1234323},E_4]+r^{-1}E_{23}E_{123434}-sE_{123434}E_{23};
\end{equation*}
On the other hand,
\begin{equation*}
\begin{split}
 &[[E_{1233},E_4],E_{234}]\\
&=(1{-}rs^{-1})[E_{1233},E_4E_{234}]+r^{-1}[[E_{1233},E_{234}],E_4]+(s^{-1}{-}r^{-1})[E_{1233},E_{234}]E_4\\
&=(1{-}rs^{-1})[E_{1233},E_4]E_{234}+s^{-1}[[E_{1233},E_{234}],E_4]\\
&=(1{-}rs^{-1})(r{+}s)E_{12343}E_{234}-(r{+}s)[E_{1234323},E_4]+(r^2{-}s^2)E_{234}E_{12343}\\
 &\quad +(rs)^{-1}(r^2{-}s^2)E_{23}E_{123434}\\
&=(1{-}rs^{-1})(r{+}s)[E_{12343},E_{234}]-(r{+}s)[E_{1234323},E_4]+(rs)^{-1}(r^2{-}s^2)E_{23}E_{123434},
\end{split}
\end{equation*}
that is
\begin{equation*}
[E_{12343},E_{234}]=(r^{-1}{-}r^{-2}s)E_{23}E_{123434}-r^{-1}s[E_{1234323},E_4].
\end{equation*}
So we get (6) \& (7).

(8) follows directly from \ref{J}  and (7).

(9) follows directly from \ref{J},  Lemma \ref{l2} (1) and \ref{l4}.

(10):  Using \ref{J}, \ref{L}, Lemma \ref{l4} (7) and $[E_{1234}, E_{1234342}]=0$, we obtain
\begin{equation*}
\begin{split}
 &[E_{123432},E_{123434}]\\
&=(1{-}rs^{-1})[E_{123432}E_{1234},E_{34}]-rs[[E_{123432},E_{34}],E_{1234}]\\
&=r(s{-}r)[E_{123432},E_{34}]E_{1234}+(1{-}rs^{-1})E_{123432}E_{123434}-rs[[E_{123432},E_{34}],E_{1234}]\\
&=s^{-1}E_{1234}[E_{123432},E_{34}]-r^2[E_{123432},E_{34}]E_{1234}+(1{-}rs^{-1})E_{123432}E_{123434}\\
&=-s(r{+}s)^{-1}E_{1234}E_{12343423}+s(r{-}s)E_{1234}E_3E_{1234342}+(rs)^2(r{+}s)^{-1}E_{12343423}E_{1234}\\
 &\quad-(rs)^2(r{-}s)E_3E_{1234342}E_{1234}+(1{-}rs^{-1})E_{123432}E_{123434}\\
&=-s(r{+}s)^{-1}[E_{1234},E_{12343423}]+s(r{-}s)E_{12343}E_{1234342}+(1{-}rs^{-1})E_{123432}E_{123434}\\
&=-s(r{+}s)^{-1}(-r^2[E_{12343},E_{1234342}]+(r^2{-}s^2)E_{12343}E_{1234342})+s(r{-}s)E_{12343}E_{1234342}\\
 &\quad+(1{-}rs^{-1})E_{123432}E_{123434}\\
&=-s(r{+}s)^{-1}(-s^2[E_{12343},E_{1234342}]+(r^2{-}s^2)E_{1234342}E_{12343})\\
 &\quad +s(r{-}s)([E_{12343},E_{1234342}]+E_{1234342}E_{12343})+(1{-}rs^{-1})E_{123432}E_{123434}\\
&=r^2s(r{+}s)^{-1}(-s^{-2}[E_{123432},E_{123434}]+(s^{-2}{-}r^{-2})E_{123432}E_{123434})\\
 &\quad+(1{-}rs^{-1})E_{123432}E_{123434}\\
&=-r^2s^{-1}(r{+}s)^{-1}[E_{123432},E_{123434}].
\end{split}
\end{equation*}
We have $[E_{123432},E_{123434}]=0$, since $r^2+rs+s^2\neq0$.

(11) follows directly from (10) and $[E_{123434},E_3]=0$.

(12) follows directly from (9), $[E_{123434},E_3]=0$ and
\ref{J}.

(13) follows directly from $[E_{123434},E_4]=0$ and
$[E_2,E_4]=0$.

(14): Using \ref{J}, (12) and (10), we have
\begin{equation*}
\begin{split}
&[E_{12343},E_{1234342}]\\
&=-s^{-2}[E_{123432},E_{123434}]+(s^{-2}{-}r^{-2})E_{123432}E_{123434}\\
&=(s^{-2}{-}r^{-2})E_{123432}E_{123434}.
\end{split}
\end{equation*}

(15): Using  \ref{L}, (13) and (14), we have
\begin{equation*}
\begin{split}
&[E_{123434},E_{1234342}]\\
&=[E_{12343},[E_4,E_{1234342}]]{+}r^{-2}[[E_{12343},E_{1234342}],E_4]{+}(s^{-2}{-}r^{-2})[E_{12343},E_{1234342}]E_4\\
&=(1{-}r^2s^{-2})[E_{12343},E_4E_{1234342}]+r^{-2}[[E_{12343},E_{1234342}],E_4]\\
&+(s^{-2}{-}r^{-2})[E_{12343},E_{1234342}]E_4\\
&=(1{-}r^2s^{-2})E_{123434}E_{1234342}+s^{-2}[[E_{12343},E_{1234342}],E_4]\\
&=(1{-}r^2s^{-2})E_{123434}E_{1234342}+s^{-2}(s^{-2}{-}r^{-2})[E_{123432}E_{123434},E_4]\\
&=(1{-}r^2s^{-2})(E_{123434}E_{1234342}-rs^3E_{1234342}E_{123434})\\
&=(1{-}r^2s^{-2})[E_{123434},E_{1234342}].
\end{split}
\end{equation*}
Therefore,
$[E_{123434},E_{1234342}]=0.$

(16) follows directly from \ref{J},  (7) and Lemma 3.4 (14).

(17) follows directly from  \ref{J}, Lemma \ref{l5} (1) and (8).
\end{proof}

\begin{lemm}\label{l6} The following relations hold in $U^{+}:$

\smallskip
$(1)\quad [E_{123434},E_{12343423}]=0;$

$(2)\quad [E_{1234342},E_{2}]=0;$

$(3)\quad [E_{1234342},E_{23}]=r^{-2}E_2E_{12343423}-r^2E_{12343423}E_2;$

$(4)\quad [E_{12343423},E_{2}]=0;$

$(5)\quad [E_{1234342},E_{12343423}]=0;$

$(6)\quad [E_{123434233},E_{3}]=0;$

$(7)\quad [E_{1234342},E_{123434233}]=r(r{-}s)E_{12343423}^2;$

$(8)\quad [E_{12343423},E_{123434233}]=0;$

$(9)\quad [E_{12343423},E_{23}]=r^{-2}s^{-2}E_2E_{123434233}-s^2E_{123434233}E_2;$

$(10)\quad
[E_{12343423},E_{233}]=r^{-1}s^{-2}(r{+}s)(E_{23}E_{123434233}-rs^3E_{123434233}E_{23});$

$(11)\quad [E_{123434233},E_{23}]=0;$

$(12)\quad [E_{1234342332},E_{3}]=0;$

$(13)\quad [E_{12343423},E_{1234342332}]=0;$

$(14)\quad [E_{123434233},E_{1234342332}]=0;$

$(15)\quad [E_{1234342332},E_2]=0;$

$(16)\quad [E_{1234342},E_{233}]=E_{23}E_{12343423}-rsE_{12343423}E_{23};$

$(17)\quad
[E_{1234323},E_{234}]=r(r{+}s)^{-1}(E_{23}E_{12343423}-s^2E_{12343423}E_{23});$

$(18)\quad
[E_{123432},E_{2343}]=r^2(r{-}s)E_{233}E_{1234342}+(r{-}s)E_{23}E_{12343423}$

\hskip4cm $+\,r^2sE_{234}E_{1234323}-sE_{1234323}E_{234}$.
\end{lemm}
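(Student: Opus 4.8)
The plan is to establish the eighteen relations by the same bootstrapping method used for Lemmas \ref{l1}--\ref{l5}: repeatedly apply the generalized Jacobi identity \ref{J} and the Leibniz rules \ref{L}, reducing each bracket to combinations of brackets already computed, while exploiting the inductive (minimal-pair) definitions recorded in Table I. Concretely I would substitute $E_{123434}=[E_{12343},E_4]$, $E_{1234342}=[E_{123434},E_2]$, $E_{12343423}=[E_{1234342},E_3]$, $E_{123434233}=[E_{12343423},E_3]$, $E_{1234342332}=[E_{123434233},E_2]$, and $E_{1234323}=[E_{123432},E_3]$, peeling off one generator at a time. Because several relations feed into one another (for instance (3) is needed for (4), and (1) rests on Lemma \ref{l5}(11),(15)), they must be proved in a carefully chosen order rather than strictly top to bottom.

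Two recurring devices handle the two types of statements. For the vanishing relations --- (1), (2), (4), (5), (6), (8), (11)--(15) --- I would expand $[X,Y]$ along a chain of \ref{J}/\ref{L} steps until it reappears as a scalar multiple $c[X,Y]$ of itself; since the structure constants $p_{xy}=\langle\omega_\eta',\omega_\zeta\rangle$ are monomials in $r,s$, the coefficient $1-c$ is always one of $1+rs^{-1}$, $1+r^2s^{-2}$, $r+s$, or $1+rs^{-1}+r^2s^{-2}$ (equivalently $r^2+rs+s^2$), each nonzero under the standing assumptions $r^2\ne s^2$, $r^3\ne s^3$, forcing $[X,Y]=0$. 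Several cases collapse faster: e.g. (2) follows by writing $E_{1234342}=[[E_{12343},E_4],E_2]$, commuting $E_4$ past $E_2$ via $[E_4,E_2]=0$, and invoking Lemma \ref{l5}(1); while (6) and (12) vanish for weight reasons --- adjoining a further $\alpha_3$ to $E_{123434233}$ or $E_{1234342332}$ leaves the root lattice outside $\Phi^+$ --- which the Jacobi reduction makes explicit.

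For the explicit relations --- (3), (7), (9), (10), (16), (17), (18) --- the same expansions are carried out but retained in full: one writes the target vector via its minimal pair, applies \ref{L} to distribute the bracket, and substitutes the established identities to collect the surviving monomials. Typical inputs are Lemma \ref{l5}(6),(7),(8),(14),(17) together with the vanishing relations (2),(5),(8) of this lemma; relation (3), for example, comes directly from \ref{L} applied to $[E_{1234342},[E_2,E_3]]$ using (2), and then (16),(17) follow by feeding (3) and Lemma \ref{l5}(7),(8) back through \ref{J}.

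The main obstacle will be the bookkeeping of the scalar structure constants and the choice of the most economical bracket chain in the deepest cases. Relation (18), whose four-term right-hand side spans $E_{233}E_{1234342}$, $E_{23}E_{12343423}$ and the pair $E_{234}E_{1234323}$, $E_{1234323}E_{234}$, requires combining \ref{J} with relations (3),(16),(17) of this lemma and Lemma \ref{l5}(17), and the intermediate expressions proliferate; likewise (9) and (10), which reach the height-$9$ vector $E_{123434233}$, demand several nested applications of \ref{J} with careful tracking of powers of $r$ and $s$. Verifying that the accumulated coefficients simplify to the stated monomials, and that every cancellation-producing factor is genuinely nonzero under $r^2\ne s^2$, $r^3\ne s^3$, is where the real work lies; the conceptual content is entirely contained in the reduction scheme above.
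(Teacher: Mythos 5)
Your plan coincides with the paper's proof in essentially every respect: the paper establishes the vanishing relations (4), (8), (11) by exactly the self-reproducing trick you describe (expanding via \ref{J}/\ref{L} until the bracket returns as $c\,[X,Y]$ with $1-c$ a nonzero factor such as $1-r^2s^{-2}$ or $1+rs^{-1}+r^2s^{-2}$), derives the remaining vanishing cases ((1), (2), (5), (6), (12)--(15)) directly from \ref{J} together with Lemma \ref{l5}(1),(15) and the earlier items of this lemma, and obtains the explicit relations (3), (7), (9), (10), (16)--(18) by the same distribute-and-substitute expansions you outline (e.g.\ (18) from \ref{J}, (16) and Lemma \ref{l5}(17)), in the same dependency order. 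The only cosmetic difference is that the paper proves (6) via the Lyndon decomposition $[E_{123434},E_{233}]$ and the identities $[E_{123434},E_3]=[E_{233},E_3]=0$ rather than by any weight heuristic, but since you defer to the Jacobi reduction there anyway, your proposal is a correct outline of the paper's argument.
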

\begin{proof}[{\bf Proof.}]
 (1): Using \ref{J}, Lemma \ref{l5} (15) and
$[E_{123434},E_3]=0$,  we obtain  the desired result.

(2) follows from \ref{J}  and Lemma \ref{l5} (1).

(3): Using (2) and Serre relations, we
obtain
\begin{equation*}
\begin{split}
[E_{1234342},E_{23}]
&=-s^2[E_{12343423},E_2]+(s^2{-}r^2)E_{12343423}E_2\\
&=r^{-2}E_2E_{12343423}-r^2E_{12343423}E_2.
\end{split}
\end{equation*}

(4): We have
\begin{equation*}
\begin{split}
&[E_{12343423},E_{2}]\\
&=[E_{123434},[E_{23},E_2]]+r^{-2}[E_{1234342},E_{23}]+(s^{-2}{-}r^{-2})E_{1234342}E_{23}\\
&=(1{-}r^2s^{-2})[E_{123434},E_{23}E_2]+r^{-2}[E_{1234342},E_{23}]+(s^{-2}{-}r^{-2})E_{1234342}E_{23}\\
&=(1{-}r^2s^{-2})E_{12343423}E_2+s^{-2}[E_{1234342},E_{23}]\\
&=(1{-}r^2s^{-2})E_{12343423}E_2+s^{-2}(r^{-2}E_2E_{12343423}-r^2E_{12343423}E_2)\\
&=r^2s^{-2}[E_{12343423},E_{2}].
\end{split}
\end{equation*}
So we get $[E_{12343423},E_{2}]=0$.

(5) follows directly from \ref{J},  (1) and  (4).

(6) follows directly from $[E_{123434},E_3]=0$,
$[E_{233},E_3]=0$ and \ref{J}.

(7): Using (5) and \ref{J}, we have
\begin{equation*}
\begin{split}
[E_{1234342},E_{123434233}]
&=-rs[E_{12343423},E_{12343423}]\\
&=rs(rs^{-1}{-}1)E_{12343423}^2\\
&=r(r{-}s)E_{12343423}^2.
\end{split}
\end{equation*}

(8): Using (6) and \ref{J}, we have
\begin{equation*}
\begin{split}
&[E_{12343423},E_{123434233}]\\&=[E_{1234342},[E_3,E_{123434233}]]+r^{-2}[[E_{1234342},E_{123434233}],E_3]\\
&\quad +(s^{-2}{-}r^{-2})[E_{1234342},E_{123434233}]E_3\\
&=(1{-}r^2s^{-2})[E_{1234342},E_3E_{123434233}]+r^{-2}[[E_{1234342},E_{123434233}],E_3]\\
&\quad +(s^{-2}{-}r^{-2})[E_{1234342},E_{123434233}]E_3\\
&=(1{-}r^2s^{-2})E_{12343423}E_{123434233}+s^{-2}[[E_{1234342},E_{123434233}],E_3]\\
&=(1{-}r^2s^{-2})E_{12343423}E_{123434233}+rs^{-2}(r{-}s)[E_{12343423}^2,E_3]\\
&=(1{-}r^2s^{-2})E_{12343423}E_{123434233}\\
&\quad +rs^{-2}(r{-}s)(rsE_{123434233}E_{12343423}+E_{12343423}E_{123434233})\\
&=(1{-}rs^{-1})(E_{12343423}E_{123434233}-r^2E_{123434233}E_{12343423})\\
&=(1{-}rs^{-1})[E_{12343423},E_{123434233}].
\end{split}
\end{equation*}
Therefore, $[E_{12343423},E_{123434233}]=0.$

(9): Using (4) and \ref{J}, we have
\begin{equation*}
\begin{split}
[E_{12343423},E_{23}]
&=-r^2[E_{123434233},E_2]+(r^2{-}s^2)E_{123434233}E_2\\
&=r^{-2}s^{-2}E_2E_{123434233}-s^2E_{123434233}E_2.
\end{split}
\end{equation*}

(10): Using \ref{J}, \ref{L}, (9) and (6),  we obtain
\begin{equation*}
\begin{split}
[E_{12343423},E_{233}]&
=[[E_{12343423},E_{23}],E_3]-rs[E_{123434233},E_{23}]\\
&=r^{-2}s^{-2}[E_2E_{123434233},E_3]-s^2[E_{123434233}E_2,E_3]\\
&\quad -rsE_{123434233}E_{23}+r^{-1}s^{-1}E_{23}E_{123434233}\\
&=s^{-1}(r^{-1}{+}s^{-1})E_{23}E_{123434233}-s(r{+}s)E_{123434233}E_{23}.
\end{split}
\end{equation*}

(11): Using (10) and \ref{J}, we have
\begin{equation*}
\begin{split}
&[E_{123434233},E_{23}]\\&=[E_{123434},[E_{233},E_{23}]]+r^{-2}[E_{12343423},E_{233}]+(s^{-2}{-}r^{-2})E_{12343423}E_{233}\\
&=(1{-}r^2s^{-2})[E_{123434},E_{233}E_{23}]+r^{-2}[E_{12343423},E_{233}]+(s^{-2}{-}r^{-2})E_{12343423}E_{233}\\
&=(1{-}r^2s^{-2})E_{123434233}E_{23}{+}s^{-2}[E_{12343423},E_{233}]\\
&=(1{-}r^2s^{-2})E_{123434233}E_{23}{+}s^{-3}(r^{-1}{+}s^{-1})E_{23}E_{123434233}{-}s^{-1}(r{+}s)E_{123434233}E_{23}\\
&=-(rs^{-1}{+}r^2s^{-2})(E_{123434233}E_{23}-r^{-2}s^{-2}E_{23}E_{123434233})\\
&=-(rs^{-1}{+}r^2s^{-2})[E_{123434233},E_{23}],
\end{split}
\end{equation*}
then $(1{+}rs^{-1}{+}r^2s^{-2})[E_{123434233},E_{23}]=0$. Therefore,
$[E_{123434233},E_{23}]=0$, since $1{+}rs^{-1}{+}r^2s^{-2}\neq0$.

(12): Using (6), (11) and \ref{J}, we have
$$[E_{1234342332},E_{3}]=[E_{123434233},E_{23}]=0.$$

(13) follows directly from  \ref{J}, (4)  and (8).

(14) follows directly from \ref{J}, (12) and (13).

(15): Using  \ref{J}   and (4), we
have
\begin{equation*}
\begin{split}
[E_{1234342332},E_2]&=[[E_{123434233},E_2],E_2]\\
&=[[E_{12343423},[E_3,E_2]],E_2]\\
&=[E_{12343423},[[E_3,E_2],E_2]]\\
&=0.
\end{split}
\end{equation*}

(16): On the one hand, we have

\begin{equation*}
\begin{split}
[E_{1234342},E_{233}]&=[E_{123434},[E_2,E_{233}]]+r^2s^2[E_{123434233},E_2]\\
&=r(r{-}s)[E_{123434},E_{23}^2]+r^2s^2E_{1234342332}\\
&=r(r{-}s)(E_{12343423}E_{23}+r^{-2}E_{23}E_{12343423})+r^2s^2E_{1234342332}.
\end{split}
\end{equation*}
On the other hand, using \ref{J}, (3) and \ref{L}, we have
\begin{equation*}
\begin{split}
[E_{1234342},E_{233}]&=[[E_{1234342},E_{23}],E_{3}]-rs[E_{12343423},E_{23}]\\
&=r^{-2}[E_2E_{12343423},E_{3}]-r^2[E_{12343423}E_2, E_3]-rs[E_{12343423},E_{23}]\\
&=(1{+}r^{-1}s)E_{23}E_{12343423}-(r^2{+}rs)E_{12343423}E_{23}\\
&\quad -r^2s^2E_{123434233}E_2+r^{-2}E_2E_{123434233}.
\end{split}
\end{equation*}
Comparing the expansions, we obtain
$$[E_{1234342},E_{233}]=E_{23}E_{12343423}-rsE_{12343423}E_{23}.$$

(17): Using \ref{J}, Lemma \ref{l4} (14) and Lemma \ref{l5} (4),  we have  $[E_{1234323}, E_{23}]=0.$

Using \ref{J}, Lemma \ref{l5} (7), we obtain
\begin{equation*}
\begin{split}
[E_{1234323},E_{234}]&=[[E_{1234323},E_{23}],E_{4}]+s^{-1}E_{23}[E_{1234323},E_4]-s[E_{1234323},E_4]E_{23}\\
&=r(r{+}s)^{-1}(E_{23}E_{12343423}-s^2E_{12343423}E_{23}).
\end{split}
\end{equation*}

(18) follows from  \ref{J}, (16) and Lemma \ref{l5} (17).
\end{proof}

Combining the above lemmas, we get the following
\begin{theorem}\label {t7}
$[E_{\b_i},E_{\b_{i+1}}]=0, \b_i\in \Phi^+, 1\leq i<24.$ $\hfill\Box$
\end{theorem}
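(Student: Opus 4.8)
The plan is to obtain the statement as a direct collation of the commutation relations already established in Lemmas \ref{l1}--\ref{l6}, since the convex ordering fixes the sequence $\b_1<\b_2<\cdots<\b_{24}$ explicitly via Table I. Concretely, the theorem reduces to checking, for each of the $23$ adjacent pairs $(\b_i,\b_{i+1})$, that the bracket $[E_{\b_i},E_{\b_{i+1}}]$ has already been shown to vanish. First I would transcribe the ordered list of quantum root vectors
\[
E_1,\,E_{12},\,E_{123},\,E_{1233},\,E_{12332},\,E_{1234},\,E_{12343},\,E_{12343123432},\,E_{123432},\dots,E_{34},\,E_4,
\]
read off from the first column of Table I, and then attach to each consecutive pair the precise lemma item that kills its bracket.

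The matching splits into three natural blocks. The pairs confined to a rank-two or rank-three subconfiguration are covered by Lemma \ref{l1}: part (1) yields $[E_1,E_{12}]=[E_2,E_{23}]=[E_3,E_{34}]=[E_{34},E_4]=0$, and part (4) yields $[E_{12},E_{123}]=[E_{23},E_{233}]=[E_{123},E_{1233}]=[E_{1233},E_{12332}]=0$. The long central block around the highest-root vector $E_{12343123432}$ is supplied one pair at a time: $[E_{12332},E_{1234}]$ by Lemma \ref{l4}(4), $[E_{1234},E_{12343}]$ by \ref{l4}(5), $[E_{12343},E_{12343123432}]$ by \ref{l4}(11), $[E_{12343123432},E_{123432}]$ by \ref{l5}(3), $[E_{123432},E_{1234323}]$ by \ref{l5}(5), $[E_{1234323},E_{123434}]$ by \ref{l5}(11), $[E_{123434},E_{1234342}]$ by \ref{l5}(15), $[E_{1234342},E_{12343423}]$ by \ref{l6}(5), $[E_{12343423},E_{123434233}]$ by \ref{l6}(8), and $[E_{123434233},E_{1234342332}]$ by \ref{l6}(14). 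Finally the descending tail is closed by $[E_{1234342332},E_2]=0$ from Lemma \ref{l6}(15), together with $[E_{233},E_{234}]=0$ (\ref{l3}(1)), $[E_{234},E_{2343}]=0$ (\ref{l3}(3)), $[E_{2343},E_{23434}]=0$ (\ref{l3}(8)) and $[E_{23434},E_3]=0$ (\ref{l3}(5)).

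Once the correspondence is laid out, the remaining step is purely a matter of verifying coverage: the $23$ cited items exhaust exactly the $23$ nearest-neighbour pairs of the convex ordering, with none omitted and none double-counted. The genuine difficulty is not in the present argument but has already been absorbed into the lemmas, where each such bracket is forced to vanish through repeated use of the generalized Jacobi identity \ref{J} and the Leibniz rules \ref{L}, and crucially through the nondegeneracy hypotheses $r^2\ne s^2$ and $r^3\ne s^3$ that guarantee the cancellation factors $r+s\ne0$, $r^2+rs+s^2\ne0$ and $1+rs^{-1}+r^2s^{-2}\ne0$ invoked in Lemmas \ref{l3}--\ref{l6}. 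Thus I expect the only real pitfall to be bookkeeping: ensuring that the explicit convex order from Table I is followed exactly, so that \emph{every} adjacent pair is indeed among the relations computed, which is precisely what makes the chosen PBW ordering convex in the strong sense that consecutive root vectors $q$-commute.
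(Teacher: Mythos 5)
Your proposal is correct and is essentially the paper's own argument: the paper proves Theorem \ref{t7} simply by ``combining the above lemmas,'' and your explicit pair-by-pair matching of the $23$ adjacent brackets to items of Lemmas \ref{l1}, \ref{l3}, \ref{l4}, \ref{l5} and \ref{l6} is an accurate and complete transcription of that collation.
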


\begin{theorem} \label{t8}
For two positive roots  $\b_i<\b_j$, we have $[E_{\b_i},E_{\b_j}]\in B_{i,j}$, where $B_{i,j}$ is the subalgebra of $U^+$
generated by $\{E_{\b_k}\mid \b_i<\b_k<\b_j, \b_k\in \Phi^+\}$.
\end{theorem}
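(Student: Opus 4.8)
The plan is to read this as the Levendorskii--Soibelman convexity property for the Lyndon PBW generators of $U^+$ and to prove it by induction on the total height $\mathrm{ht}(\beta_i)+\mathrm{ht}(\beta_j)$ (the entries in the column $h(\beta_i)$ of Table I). Since $B_{i,j}$ is generated only by the \emph{strictly} interior root vectors, two families of pairs serve as base cases. When $j=i+1$ there are no interior generators, so $B_{i,i+1}=\mathbb{K}$, and Theorem \ref{t7} gives $[E_{\beta_i},E_{\beta_{i+1}}]=0\in\mathbb{K}$. When $\beta_i,\beta_j$ are both simple, $\beta_i+\beta_j$ is either not a root, in which case the $(r,s)$-Serre relations force $[E_{\beta_i},E_{\beta_j}]=0$, or it is a root $\gamma$, in which case (its only decomposition being the two simple summands) $[E_{\beta_i},E_{\beta_j}]$ equals the defining bracket $E_\gamma$, whose index lies strictly between $i$ and $j$ by convexity of the ordering; either way $[E_{\beta_i},E_{\beta_j}]\in B_{i,j}$.

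For the inductive step I would split the higher root through its minimal (co-standard) factorization from Table I, $E_{\beta_j}=[E_{\beta_p},E_{\beta_q}]$ with $\beta_p<\beta_j<\beta_q$, and expand $[E_{\beta_i},[E_{\beta_p},E_{\beta_q}]]$ using the generalized Jacobi identity \ref{J} and the Leibniz rules \ref{L}. Because $\beta_j=\beta_p+\beta_q$ forces both $\mathrm{ht}(\beta_p)$ and $\mathrm{ht}(\beta_q)$ to be strictly smaller than $\mathrm{ht}(\beta_j)$, the inner commutators $[E_{\beta_i},E_{\beta_p}]$ and $[E_{\beta_i},E_{\beta_q}]$ have strictly smaller total height, so the induction hypothesis applies to each. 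Lemma \ref{indep of decom} guarantees that the value of $E_{\beta_j}$ is independent of the chosen factorization, so one is free to pick the most convenient pair at each root; when it is $\beta_i$ rather than $\beta_j$ that should be split, its minimal pair $\beta_c<\beta_i<\beta_d$ is used symmetrically.

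The main obstacle is the unavoidable ``overshoot'': one factor, say $\beta_q$, lies above $\beta_j$, so the induction only places $[E_{\beta_i},E_{\beta_q}]$ in the larger algebra $B_{i,q}$, which may involve generators $E_{\beta_k}$ with $j\le k<q$ that are forbidden in $B_{i,j}$. The heart of the proof is to show that these out-of-range contributions cancel or recombine into $B_{i,j}$ once the outer brackets against $E_{\beta_p}$ are taken and \ref{J}, \ref{L} are applied again. This is exactly the purpose of the explicit computations in Lemmas \ref{l1}--\ref{l6}: they record the precise structure constants of all the boundary brackets (for instance $[E_{233},E_4]=(r+s)E_{2343}$) together with the many vanishing relations, and, combined with Theorem \ref{t7}, they let one verify root by root that every term of height equal to $\mathrm{ht}(\beta_j)$ drops out. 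Since $F_4$ is non-simply-laced the coefficients are delicate and the number of boundary configurations is large, so I expect this reorganization, rather than the bookkeeping of the induction itself, to be where essentially all the work lies.

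Conceptually the statement is a special case of Kharchenko's general convexity theorem for PBW bases of good Lyndon words (\cite{K}): for Lyndon words $u_i<u_j$ the bracket $[E_{u_i},E_{u_j}]$ is automatically a combination of ordered monomials in super-letters whose words lie strictly between $u_i$ and $u_j$, and by Table I the lexicographic order on the $u_k$ coincides with the fixed convex order on $\Phi^+$. One could invoke this to obtain membership in $B_{i,j}$ abstractly; the explicit induction above is nonetheless preferable here because it produces the exact structure constants needed for the homogeneous central elements of Section 4 and the integrals of Sections 7--8.
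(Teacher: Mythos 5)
Your overall strategy --- reduce everything to the explicitly computed brackets of Lemmas \ref{l1}--\ref{l6} --- is the right instinct, but as written the argument has a genuine gap precisely at the point you yourself flag as ``the heart of the proof.'' In the inductive step you expand $[E_{\b_i},[E_{\b_p},E_{\b_q}]]$ with $\b_p<\b_j<\b_q$ by the identity $(J)$ and invoke the induction hypothesis to place $[E_{\b_i},E_{\b_q}]$ in $B_{i,q}$; you then assert that the out-of-range generators $E_{\b_k}$ with $j\le k<q$ ``cancel or recombine'' after the outer bracket with $E_{\b_p}$ is taken, but no mechanism for this cancellation is given --- it is exactly the content of the theorem in disguise. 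Moreover the induction is not clearly well-founded: after the first application of $(J)$ one must still compute brackets such as $[E_{\b_p},E_{\b_k}]$ for the roots $\b_k$ produced by the inner step, and when $\b_i+\b_q$ is itself a root this new pair has the \emph{same} total height $\mathrm{ht}(\b_p)+\mathrm{ht}(\b_i)+\mathrm{ht}(\b_q)=\mathrm{ht}(\b_i)+\mathrm{ht}(\b_j)$ as the pair you started from, so the height induction does not apply to it. As it stands the proposal is a plan whose decisive step is missing, not a proof.

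The paper's proof is organized quite differently and avoids a top-level induction altogether. It fixes the weight $\gamma=\b_i+\b_j$ and exploits homogeneity: $[E_{\b_i},E_{\b_j}]$ lies in the graded piece of $U^+$ of weight $\gamma$, which by Theorem \ref{BaseE} is spanned by ordered PBW monomials in root vectors whose roots sum to $\gamma$, and convexity of the chosen order forces every decomposition of $\gamma$ into positive roots other than $\b_i+\b_j$ itself to use only roots strictly between $\b_i$ and $\b_j$. What remains is a finite check: when $\gamma$ is a root, that $[E_{\b_i},E_{\b_j}]$ equals $E_\gamma$ (minimal or Lyndon pair) or one of the explicit interior expressions --- the table in the paper's proof indexes which of Lemmas \ref{l1}--\ref{l6} covers each remaining decomposition --- and when $\gamma$ is not a root, that the boundary monomial $E_{\b_j}E_{\b_i}$ does not survive, again read off from the explicit relations. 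If you want to keep an inductive flavour, the clean route is the one you mention only in passing: Kharchenko's convexity statement for hard super-letters together with the observation from Table I that the lexicographic order on the standard Lyndon words matches the convex order on $\Phi^+$; but then you must actually verify that compatibility, and that $E_{\b_i}$ and $E_{\b_j}$ themselves cannot occur in the resulting monomials, rather than leaving both as a remark.
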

\begin{proof}[{\bf Proof.}]We can write $\b$ as a sum $\b_i+\b_j$ of two positive roots $\b_i<\b_j$.

Case 1.  $\b$ is a positive root. If $(\b_i,\b_j)$ is a minimal pair
or a Lyndon pair for $\b$, we have $[E_{\b_i}, E_{\b_j}]=E_{\b}\in
B_{i,j}$;  if $(\b_i,\b_j)$ is neither a minimal pair  nor a Lyndon
pair for $\b$, according to the root decomposition in Table I and
the relations given in the  following forms, we have $[E_{\b_i},
E_{\b_j}]\in B_{i,j}$.

\newcommand{\tabincell}[2]{\begin{tabular}{@{}#1@{}}#2\end{tabular}}

\bigskip
\begin{tiny}
\begin{center}
\begin{tabular}{|c|c|c|c|c|c|}

  \hline
   $h(\b)$  &\tabincell{c} { roots $\rightarrow$  root vectors}  & \tabincell{c} {relations}   \\
  \hline
  4 & \tabincell{c}{$\b_4\rightarrow E_{1233}$ \\$\b_6\rightarrow E_{1234}$\\$\b_{20}\rightarrow E_{2343}$} &
  \tabincell{c}{$(M),(L);$ \\$(M), (L),(L);$\\$(M), L3.2(3),(2); $}\\
  \hline
  5 & \tabincell{c}{$\b_5\rightarrow E_{12332}$ \\$\b_7\rightarrow E_{12343}$\\$\b_{21}\rightarrow E_{23434}$} &
  \tabincell{c}{$(M),L3.1 (3), (7);$ \\$(M), (L),L3.2 (4), (9);$\\$(M),(L);$}\\
  \hline
 6 & \tabincell{c}{$\b_9\rightarrow E_{123432}$ \\$\b_{11}\rightarrow E_{123434}$} &
 \tabincell{c}{$(M), L3.2 (10), (11),  L3.4 (1), (15);$\\$(M),(L),(L); $}\\
  \hline
 7 & \tabincell{c}{$\b_{10}\rightarrow E_{1234323}$ \\$\b_{12}\rightarrow E_{1234342}$} &  \tabincell{c}{$(M), (L), L3.2 (5), (12), (13);$\\$(M),(L),L3.2 (14); $}\\
  \hline
 8 & \tabincell{c}{$\b_{13}\rightarrow E_{12343423}$} &  \tabincell{c}{$(M), (L), L3.2 (15), L3.5 (6), (7), (8); $}\\
  \hline
 9 & \tabincell{c}{$\b_{14}\rightarrow E_{123434233}$} &  \tabincell{c}{$(M), (L), L3.5 (16), L3.4 (16), L3.3 (11);$}\\
  \hline
 10 & \tabincell{c}{$\b_{15}\rightarrow E_{1234342332}$} &  \tabincell{c}{$(M), (L), L3.6 (9), (16), (17), (18), [E_{12332},E_{23434}]\in B_{5,21};$}\\
  \hline
 11 & \tabincell{c}{$\b_{8}\rightarrow E_{12343123432}$} &  \tabincell{c}{$(M),  L3.4 (8), (18),(19), [E_{123},E_{12343423}]\in B_{3,13},$\\$ [E_{12},E_{123434233}]\in B_{2,14}, [E_{1},E_{1234342332}]\in B_{1,15}.$}\\
  \hline
 \end{tabular}
 \end{center}
 \end{tiny}
\bigskip

Case 2.  $\b$ is not a positive root. If $\b=\b_i+\b_j=2\b_k$, $\b_k$ is a positive root with
$\b_i<\b_k<\b_j$, then $[E_{\b_i}, E_{\b_j}]=AE_{\b_k}^2$, where $A$ is a non-zero coefficient.
This implies $[E_{\b_i}, E_{\b_j}]\in B_{i,j}$.  If $\b=\b_i+\b_j=\b_{i'}+\b_{j'}$
and $\b_i<\b_{i'}<\b<\b_{j'}<\b_j, \b_{i'},\b_{j'}\in \Phi^+$,
then $[E_{\b_i}, E_{\b_j}]$  is a linear  combination  of products
$E_{\b_{i'}}E_{\b_{j'}}$ and $E_{\b_{j'}}E_{\b_{i'}} $.
If $\b=\b_i+\b_j\neq \b_{i'}+\b_{j'}$, then $[E_{\b_i}, E_{\b_j}]=0$. So $[E_{\b_i}, E_{\b_j}]\in B_{i,j}$.

This completes the proof.
\end{proof}


\noindent {\it 3.3. Central elements.} Assume that $r$ is a
primitive $d$th root of unity, $s$ is a primitive
 $d'$th root of unity and $\ell$ is the least common multiple of $d$ and
 $d'$. From now on, we assume that $\Bbb{K}$ contains a primitive $\ell $th
 root of unity.

In the following lemmas, we adopt the following notational conventions:
$$[n]_t:=\frac{1-t^n}{1-t},\quad [n]_t!:=[n]_t[n-1]_t\cdots
[2]_t[1]_t,\quad
 \left[m\atop n\right]_t:=\frac{[m]_t!}{[n]_t![m-n]_t!}. $$
By convention $[0]_t=0$ and
$[0]_t!=1$.

The following lemmas are useful for deriving some commutation
relations.

\begin{lemm}\label{l9} Let $x, y,z$ be elements of associative algebra $\mathfrak{A}$ over
$\mathbb{K}$, $p, q, q_1,q_2\in\mathbb{K}$, for nonnegative integer $m$,
 then
the following assertions hold

$(1)$ $x^my=\sum\limits_{i=0}^{m}q^i\left[m\atop i\right]_p((\mbox{ad}_qx)_L^{(m-i)}y)x^i$, where $(\mbox{ad}_qx)_L^{(n)}y=x((\mbox{ad}_qx)_L^{(n-1)}y)-p^{n-1}q((\mbox{ad}_qx)_L^{(n-1)}y)x, n>0$, and set  $(\mbox{ad}_qx)_L^{(0)}y=y;$

$(2)$ $xy^m=\sum\limits_{i=0}^{m}q^i\left[m\atop i\right]_py^i((\mbox{ad}_qy)_R^{(m-i)}x),$
where
$(\mbox{ad}_qy)_R^{(n)}x=((\mbox{ad}_qy)_R^{(n-1)}x)y-p^{n-1}qy((\mbox{ad}_qy)_R^{(n-1)}x), n>0$,   and set  $(\mbox{ad}_qy)_R^{(0)}x=x;$

$(3)$ $(\mbox{ad}_{q_1q_2}x)_L^{(m)}(yz)=\sum_{i=0}^mq_1^i \left[m\atop i\right]_p ((\mbox{ad}_{q_1}x)_L^{(m-i)}y)((\mbox{ad}_{q_2}x)_L^{(i)}z).$
\end{lemm}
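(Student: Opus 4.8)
The plan is to prove all three identities by induction on $m$, the common engine being the $p$-Pascal identity
\[
\left[{m+1}\atop i\right]_p=\left[m\atop i\right]_p+p^{m+1-i}\left[m\atop{i-1}\right]_p,
\]
which I will check directly from $\left[m\atop i\right]_p=[m]_p!/\bigl([i]_p!\,[m-i]_p!\bigr)$ together with $[m+1]_p=[m]_p+p^m$. The base case $m=0$ is immediate from the conventions $(\mbox{ad}_qx)_L^{(0)}y=y$, $(\mbox{ad}_qy)_R^{(0)}x=x$ and $\left[0\atop0\right]_p=1$, and it is worth recording the case $m=1$ as a sanity check, since there the two surviving terms reassemble $xy$ (resp.\ $x(yz)$) exactly.

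For $(1)$, I assume the formula for $m$ and multiply on the left by $x$. The only nontrivial move is to rewrite each $x\bigl((\mbox{ad}_qx)_L^{(m-i)}y\bigr)$ by the defining recursion in the raised form
\[
x\,A=(\mbox{ad}_qx)_L^{(n)}y+p^{n-1}q\,A\,x,\qquad A=(\mbox{ad}_qx)_L^{(n-1)}y,
\]
taken at $n=m-i+1$. This splits $x^{m+1}y$ into two sums; reindexing the second by $j=i+1$ aligns it termwise with the first, and the coefficient of $q^i\bigl((\mbox{ad}_qx)_L^{(m+1-i)}y\bigr)x^i$ becomes precisely $\left[m\atop i\right]_p+p^{m+1-i}\left[m\atop{i-1}\right]_p=\left[{m+1}\atop i\right]_p$, which closes the induction.

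Statement $(2)$ is the right-handed mirror of $(1)$ and I would deduce it cleanly by reading $(1)$ in the opposite algebra $\mathfrak{A}^{\mathrm{op}}$: there left multiplication by $x$ becomes right multiplication, $a^m b$ becomes $b\,a^m$, and the left-adjoint operator $(\mbox{ad}_qa)_L$ turns into the right-adjoint $w\mapsto wa-p^{n-1}q\,aw$, so after the relabelling $a=y$, $b=x$ the formula in $(2)$ drops out. (Equivalently one repeats the induction of $(1)$ verbatim, multiplying on the right and using $B_{n-1}y=(\mbox{ad}_qy)_R^{(n)}x+p^{n-1}q\,yB_{n-1}$ with $B_j=(\mbox{ad}_qy)_R^{(j)}x$; the same Pascal identity applies.) For $(3)$ I again induct on $m$, the new ingredient being a Leibniz splitting of a single twisted commutator: for any scalars with $c=ab$,
\[
x\,(AB)-c\,(AB)\,x=(x\,A-a\,A\,x)\,B+a\,A\,(x\,B-b\,B\,x).
\]
Applying the level-raising step $(\mbox{ad}_{q_1q_2}x)_L^{(m+1)}w=x\,w-p^{m}q_1q_2\,w\,x$ to the inductive expression $\sum_i q_1^i\left[m\atop i\right]_p A_{m-i}B_i$, where $A_j=(\mbox{ad}_{q_1}x)_L^{(j)}y$ and $B_j=(\mbox{ad}_{q_2}x)_L^{(j)}z$, I factor $c=p^{m}q_1q_2$ as $a=p^{m-i}q_1$ and $b=p^{i}q_2$; then the two pieces of the splitting become exactly $A_{m-i+1}B_i$ and $p^{m-i}q_1\,A_{m-i}B_{i+1}$, i.e.\ the level-$(m-i+1)$ raising of $y$ and the level-$(i+1)$ raising of $z$. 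The same reindex-and-Pascal bookkeeping then produces the coefficient $\left[{m+1}\atop i\right]_p$.

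The routine but delicate part throughout is tracking the exponents of $p$ so that they land on the correct side of the Pascal identity; in $(3)$ the crux is that the factorization $p^{m}q_1q_2=(p^{m-i}q_1)(p^{i}q_2)$ distributes precisely so as to match the two defining recursions for the $q_1$- and $q_2$-adjoint actions. Once this $p$-bookkeeping is pinned down, each induction closes by one application of the Pascal relation, so I expect the main obstacle to be organizational rather than conceptual.
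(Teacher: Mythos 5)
Your proposal is correct and follows essentially the same route as the paper: part $(1)$ is proved there by exactly the induction you describe (left-multiply by $x$, raise the level via the defining recursion, reindex, and close with the $p$-Pascal identity $\left[m\atop i\right]_p+p^{m+1-i}\left[m\atop{i-1}\right]_p=\left[{m+1}\atop i\right]_p$), while the paper dispatches $(2)$ and $(3)$ with the remark that they follow by the same induction on $m$. Your explicit opposite-algebra reduction for $(2)$ and the twisted-Leibniz splitting $x(AB)-ab\,(AB)x=(xA-aAx)B+aA(xB-bBx)$ with $a=p^{m-i}q_1$, $b=p^{i}q_2$ for $(3)$ are exactly the details the paper leaves implicit, and they check out.
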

\begin{proof}[{\bf Proof.}] (1): We can prove it by induction on $m$. The
equality clearly holds for $m=0$, suppose that equality holds for
all $k\leq m$, when $k=m+1$, we have
\begin{equation*}
\begin{split}
x^{m+1}y&=x\sum\limits_{i=0}^mq^i\left[m\atop i\right]_p((\mbox{ad}_qx)_L^{(m-i)}y)x^i\\
       &=\sum\limits_{i=0}^mq^i\left[m\atop i\right]_p\left((\mbox{ad}_qx)_L^{(m+1-i)}y+p^{m-i}q(\mbox{ad}_qx)_L^{(m-i)}y)x\right)x^i\\
       &=\sum\limits_{i=0}^mq^i\left[m\atop i\right]_p((\mbox{ad}_qx)_L^{(m+1-i)}y)x^i+\sum\limits_{i=0}^mp^{m-i}q^{i+1}\left[m\atop i\right]_p((\mbox{ad}_qx)_L^{(m-i)}y)x^{i+1}\\
        \end{split}
\end{equation*}
\begin{equation*}
\begin{split}&=\sum\limits_{i=0}^mq^i\left[m\atop i\right]_p((\mbox{ad}_qx)_L^{(m+1-i)}y)x^i+\sum\limits_{i=1}^{m+1}p^{m-i+1}q^{i}\left[m\atop i{-}1\right]_p((\mbox{ad}_qx)_L^{(m-i+1)}y)x^{i}\\
      &=\sum\limits_{i=0}^{m+1}q^i\left(\left[m\atop i\right]_p+p^{m-i+1}\left[m\atop i{-}1\right]_p\right)((\mbox{ad}_qx)_L^{(m+1-i)}y)x^i\\
       &=\sum\limits_{i=0}^{m+1}q^i\left[m{+}1\atop i\right]_p((\mbox{ad}_qx)_L^{(m+1-i)}y)x^i,
\end{split}
\end{equation*}
which completes the proof.

(2) and (3) can be easily verified by induction on $m$.
\end{proof}

\begin{lemm}\label{l10}
$(1)$ There exists a positive integer $N$ such that for all $m\geq
N$,
$$(\mbox{ad}_q{E}_{\b_i})_L^{(m)} {E}_{\b_{j}}=0, \quad i<j$$
holds, where
$q=\langle\omega_{\b_{j}}^\prime,\omega_{\b_i}\rangle.$

$(2)$ There exists a positive integer $N$ such that for all $m\geq
N$,
 $$(\mbox{ad}_q{E}_{\b_j})_R^{(m)} {E}_{\b_i}=0, \quad i<j$$
holds, where
$q=\langle\omega_{\b_i}^\prime,\omega_{\b_j}\rangle.$
\end{lemm}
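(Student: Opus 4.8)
The plan is to prove local nilpotency of the two skew-derivations directly. Fix $p=\lg\om_{\b_i}',\om_{\b_i}\rg$ in $(1)$ and $p=\lg\om_{\b_j}',\om_{\b_j}\rg$ in $(2)$, the self-pairing of the repeated root vector. With this choice the scalar $p^{\,n-1}q$ appearing at the $n$-th step of the iteration equals the weight-corrected structure constant: the element $(\mbox{ad}_q E_{\b_i})_L^{(n-1)}E_{\b_j}$ is homogeneous of weight $\b_j+(n-1)\b_i$, and $\lg\om'_{\b_j+(n-1)\b_i},\om_{\b_i}\rg=\lg\om'_{\b_j},\om_{\b_i}\rg\lg\om'_{\b_i},\om_{\b_i}\rg^{\,n-1}=q\,p^{\,n-1}$. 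Hence in $(1)$ the operator is literally the $m$-fold iterated bracket $[E_{\b_i},[E_{\b_i},\cdots,[E_{\b_i},E_{\b_j}]\cdots]]$ of subsection $3.1$, lying in $U^+_{m\b_i+\b_j}$; likewise $(2)$ produces an element of $U^+_{\b_i+m\b_j}$.

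For $(1)$ I would confine this iterated bracket to the subalgebra $B_{i,j}$ generated by $\{E_{\b_k}\mid\b_i<\b_k<\b_j\}$. The base case $[E_{\b_i},E_{\b_j}]\in B_{i,j}$ is Theorem \ref{t8}. Since $[E_{\b_i},-]$ obeys the Leibniz rule \ref{L}, it is enough that $[E_{\b_i},E_{\b_k}]\in B_{i,j}$ for each intermediate $\b_k$; and $\b_i<\b_k$ gives $[E_{\b_i},E_{\b_k}]\in B_{i,k}\subseteq B_{i,j}$ by Theorem \ref{t8} again. Thus $(\mbox{ad}_q E_{\b_i})_L^{(m)}E_{\b_j}\in B_{i,j}$ for every $m$, and by the PBW basis (Theorem \ref{BaseE}) every weight of $B_{i,j}$ lies in the cone $C=\mbox{cone}\{\b_k\mid\b_i<\b_k<\b_j\}$. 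As $m\to\infty$ one has $\tfrac1m(m\b_i+\b_j)\to\b_i$, so if $m\b_i+\b_j\in C$ for infinitely many $m$ then $\b_i\in C$ by closedness of $C$; but convexity of the fixed ordering forces $\b_i\notin\mbox{cone}\{\gamma\in\Phi^+\mid\gamma>\b_i\}\supseteq C$. Therefore $B_{i,j}\cap U^+_{m\b_i+\b_j}=0$ for all large $m$, which gives $(1)$.

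For $(2)$ this confinement breaks down: the first term $E_{\b_i}E_{\b_j}-\lg\om'_{\b_i},\om_{\b_j}\rg E_{\b_j}E_{\b_i}$ already involves the product of the two endpoint root vectors, so it leaves $B_{i,j}$, and the weight $\b_i+m\b_j$ never escapes the cone of $\{\b_k\mid\b_i\le\b_k\le\b_j\}$. Here the standing hypothesis of subsection $3.3$ that $r,s$ are roots of unity is indispensable (indeed $(2)$ already fails for generic parameters, as one checks in the rank-two subsystem generated by $E_{\b_i}, E_{\b_j}$). I would instead expand the right operator using Lemma \ref{l9}$(2)$: for $m$ equal to the order $e$ of the root of unity $p$, the quantum binomials $\left[m\atop k\right]_p$ vanish for $0<k<e$, so Lemma \ref{l9}$(2)$ collapses to $(\mbox{ad}_q E_{\b_j})_R^{(e)}E_{\b_i}=E_{\b_i}E_{\b_j}^{\,e}-q^{\,e}E_{\b_j}^{\,e}E_{\b_i}$, and the statement is reduced to a single skew-commutation between the $e$-th power $E_{\b_j}^{\,e}$ and $E_{\b_i}$.

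The step I expect to be the main obstacle is exactly this skew-commutation $E_{\b_i}E_{\b_j}^{\,e}=q^{\,e}E_{\b_j}^{\,e}E_{\b_i}$. It cannot be borrowed from $(1)$, which governs powers of $E_{\b_i}$ rather than of $E_{\b_j}$, so it must be proved on its own by moving $E_{\b_i}$ rightward through $E_{\b_j}^{\,e}$ with the relation $E_{\b_i}E_{\b_j}=\lg\om'_{\b_j},\om_{\b_i}\rg E_{\b_j}E_{\b_i}+(\text{element of }B_{i,j})$ from Theorem \ref{t8} and checking that both the accumulated $B_{i,j}$-corrections and the leading coefficient collapse once $r,s$ are roots of unity; this is where the arithmetic of $\ell$ (the order hypotheses on $r,s$) really enters, and $e$ may have to be enlarged to a common multiple of the orders of $p$ and of $\lg\om'_{\b_j},\om_{\b_i}\rg\lg\om'_{\b_i},\om_{\b_j}\rg^{-1}$ so that the leading scalar matches $q^{\,e}$. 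Once the skew-commutation holds, the defining recursion $(\mbox{ad}_q E_{\b_j})_R^{(m)}E_{\b_i}=\big((\mbox{ad}_q E_{\b_j})_R^{(m-1)}E_{\b_i}\big)E_{\b_j}-p^{\,m-1}q\,E_{\b_j}\big((\mbox{ad}_q E_{\b_j})_R^{(m-1)}E_{\b_i}\big)$ propagates the vanishing from $m=e$ to all larger $m$, yielding $(2)$. One could alternatively try to transport $(2)$ to $U^-$ through the anti-automorphism $\tau$ of Definition \ref{d24}, but the same confinement failure reappears there, so the root-of-unity input is still required.
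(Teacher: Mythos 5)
Your part (1) is correct, but it reaches finiteness by a different mechanism than the paper. The paper argues by induction on $j-i$: the base case $j=i+1$ is Theorem \ref{t7}, and the inductive step feeds Theorem \ref{t8} into the product formula of Lemma \ref{l9}(3), so that each intermediate factor of $[E_{\beta_i},E_{\beta_j}]$ is annihilated after finitely many iterations by the inductive hypothesis. You instead confine all iterates to $B_{i,j}$ (same inputs: Theorem \ref{t8} plus the Leibniz rule) and then use a weight–cone argument. That works, but note that the separation $\beta_i\notin\mathrm{cone}\{\beta_k\mid k>i\}$ is not a formal consequence of convexity of the order alone; it holds because the order comes from a reduced word of $w_0$, so $\{\beta_k\mid k>i\}=\Phi^+\cap w(\Phi^+)$ for a prefix $w$ of that word while $w^{-1}\beta_i\in\Phi^-$, and the functional $\langle w\rho^\vee,\cdot\rangle$ separates strictly. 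You should record this; with it, your route is a clean and arguably more conceptual alternative to the paper's induction.

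Part (2) is where the proposal has a genuine gap. Your diagnosis of the scalar is sharp and correct: with the $q=\langle\omega_{\beta_i}',\omega_{\beta_j}\rangle$ printed in the statement, the first iterate $E_{\beta_i}E_{\beta_j}-qE_{\beta_j}E_{\beta_i}$ differs from $[E_{\beta_i},E_{\beta_j}]$ by a multiple of $E_{\beta_j}E_{\beta_i}$, escapes $B_{i,j}$, and (as one can verify already for $\beta_i=\alpha_1$, $\beta_j=\alpha_2$) the iterates never terminate over $\mathbb{Q}(r,s)$. But the conclusion you draw from this sends you down a dead end: your reduction via Lemma \ref{l9}(2) parks the entire content of (2) in the single unproved identity $E_{\beta_i}E_{\beta_j}^{\,e}=q^{e}E_{\beta_j}^{\,e}E_{\beta_i}$, which you yourself flag as "the main obstacle." That identity is exactly the kind of $\ell$-th power (skew-)centrality statement that Lemma \ref{l10} exists to feed into Proposition \ref{p11} and Theorem \ref{71}; taking it as input is circular, and no independent proof of it is offered. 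As written, part (2) is not proved.

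What the paper's "similar argument" actually requires — and what its own application of (2) in the proof of Theorem \ref{71} uses — is the mirror-image normalization $q=\langle\omega_{\beta_j}',\omega_{\beta_i}\rangle$, $p=\langle\omega_{\beta_j}',\omega_{\beta_j}\rangle$. Then $p^{\,n-1}q=\langle\omega_{\beta_j}',\omega_{\beta_i+(n-1)\beta_j}\rangle$, so $(\mathrm{ad}_qE_{\beta_j})_R^{(n)}E_{\beta_i}$ is the genuine iterated bracket $[\cdots[[E_{\beta_i},E_{\beta_j}],E_{\beta_j}]\cdots,E_{\beta_j}]$ of weight $\beta_i+n\beta_j$, Theorem \ref{t8} and the Leibniz rule keep every iterate inside $B_{i,j}$, and either the paper's induction on $j-i$ or your own cone argument (now $\tfrac1m(\beta_i+m\beta_j)\to\beta_j$, separated from $\mathrm{cone}\{\beta_k\mid k<j\}$) finishes — with no root-of-unity input at all. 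So what you have really found is an inconsistency between the scalar in the statement of (2) and the scalar used where the lemma is applied; the repair is to correct the scalar and run the symmetric argument, not to import the arithmetic of $\ell$.
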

\begin{proof}[{\bf Proof.}]
(1)  holds for $j=i+1$ by Theorem \ref{t7}. Using
Theorem \ref{t8}, we get
$$[ {E}_{\b_i}, {E}_{\b_{i+k+1}}]\in \mbox{span}_{\mbox{alg}}\{\
 {E}_{\b_t}\mid i<t<i+k+1\},$$ using the induction hypothesis and Lemma \ref{l9} (3),  we know
(1) holds for $j=i+k+1$, which completes the proof.

The similar argument shows that (2) is true.
\end{proof}

\begin{prop}\label{p11}
$E_{\b_i}^\ell$ $(1\leq
i\leq24)$ commutes with $E_j$ $(1\leq j\leq4)$.
\end{prop}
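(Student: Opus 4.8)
The plan is to transport the power $E_{\b_i}^{\ell}$ across the simple generator $E_j$ by means of the $q$-commutator binomial formula of Lemma \ref{l9}, and to show that at exponent $m=\ell$ the resulting expansion collapses to its two extreme terms. Fix $i$ with $1\le i\le 24$ and $j$ with $1\le j\le 4$, and let $k$ be the position of $\a_j$ in the convex ordering, so that $E_j=E_{\b_k}$ with $k\in\{1,16,22,24\}$. If $\b_i=\a_j$ there is nothing to prove, so assume $i\ne k$. Put
$$
p=\langle \om_{\b_i}',\om_{\b_i}\rangle,\qquad q=\langle \om_{\b_k}',\om_{\b_i}\rangle,
$$
the self-braiding of $\b_i$ and the bracketing scalar attached to the pair $(\b_i,\b_k)$ as in subsection {\it 3.1}. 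Since each of $p,q$ is a Laurent monomial in $r,s$ and $r^{\ell}=s^{\ell}=1$ (because $d\mid\ell$ and $d'\mid\ell$), we get $p^{\ell}=q^{\ell}=1$ for free.

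Suppose first $i<k$. Applying Lemma \ref{l9}(1) with $x=E_{\b_i}$, $y=E_j$ and $m=\ell$ gives
$$
E_{\b_i}^{\ell}E_j=\sum_{t=0}^{\ell}q^{t}\left[\ell\atop t\right]_p\bigl((\mbox{ad}_q E_{\b_i})_L^{(\ell-t)}E_j\bigr)E_{\b_i}^{t}.
$$
The crux, justified in the last paragraph, is that $\left[\ell\atop t\right]_p=0$ for every $0<t<\ell$: if $p$ is a primitive $\ell$-th root of unity then $[\ell]_p=0$ while $[t]_p\ne0$, so the factor $[\ell]_p!$ in the numerator annihilates the coefficient without being cancelled. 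Only $t=0$ and $t=\ell$ survive, whence
$$
E_{\b_i}^{\ell}E_j=(\mbox{ad}_q E_{\b_i})_L^{(\ell)}E_j+q^{\ell}E_jE_{\b_i}^{\ell}.
$$
By the local nilpotency of the adjoint action, Lemma \ref{l10}(1), the extremal term $(\mbox{ad}_q E_{\b_i})_L^{(\ell)}E_j$ vanishes once $\ell$ exceeds the nilpotency bound $N$, and $q^{\ell}=1$; hence $E_{\b_i}^{\ell}E_j=E_jE_{\b_i}^{\ell}$. When $i>k$ one argues symmetrically, expanding $E_jE_{\b_i}^{\ell}$ via Lemma \ref{l9}(2) and killing the surviving extremal right-adjoint term by Lemma \ref{l10}(2).

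The genuine obstacle is the root-of-unity bookkeeping underlying the collapse of the binomial sum, i.e. the vanishing of $\left[\ell\atop t\right]_p$ for $0<t<\ell$. This hinges on $p=\langle \om_{\b_i}',\om_{\b_i}\rangle$ being a primitive $\ell$-th root of unity for \emph{every} $\b_i$, and here long and short roots behave differently: the self-braiding equals $(rs^{-1})^{2}$ on long roots but $rs^{-1}$ on short ones, so one must verify that the order of each is exactly $\ell$, and, where it is only a proper divisor of $\ell$, fall back on the coarser collapse (whose surviving middle terms are still killed by the nilpotency bound $N$ of Lemma \ref{l10}). This is precisely where the arithmetic hypotheses on $\ell$ relative to the small primes dividing the entries of $(a_{ij})$ intervene, and it is the step demanding the most care; everything else is a formal consequence of Lemmas \ref{l9} and \ref{l10} recorded above.
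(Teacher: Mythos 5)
Your argument is essentially the paper's own proof: expand $E_{\b_i}^{\ell}E_j$ (resp.\ $E_jE_{\b_i}^{\ell}$) by the $q$-binomial formula of Lemma \ref{l9}, kill the middle terms because $\left[\ell\atop t\right]_p=0$ for $0<t<\ell$, kill the extremal adjoint term $(\mbox{ad}_q E_{\b_i})_L^{(\ell)}E_j$ by the nilpotency statement of Lemma \ref{l10} (which rests on Theorems \ref{t7} and \ref{t8}), and use $q^{\ell}=1$. The only point to correct is your proposed ``fallback'' in the last paragraph: if $p$ had order a proper divisor $m$ of $\ell$, the $q$-Lucas theorem leaves the coefficients $\left[\ell\atop t\right]_p$ nonzero at every multiple $t$ of $m$, and the term with $t=\ell-m$ carries the adjoint power $(\mbox{ad}_q E_{\b_i})_L^{(m)}E_j$, which for small $m$ (e.g.\ $m=1$, where it is just $[E_{\b_i},E_j]$) is \emph{not} annihilated by the nilpotency bound $N$; so that escape route fails. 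The collapse genuinely requires $p=\langle\om_{\b_i}',\om_{\b_i}\rangle$ to be a primitive $\ell$-th root of unity, which holds here because $p$ equals $rs^{-1}$ or $(rs^{-1})^{2}$, $rs^{-1}$ has order $\ell$, and $\ell$ is odd --- the same (tacit) hypothesis under which the paper asserts the vanishing of the middle binomial coefficients.
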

\begin{proof}[{\bf Proof.}] Set $E_j=E_{\b_j}$, for a simple root $\b_j$.
If $ {\b_i}<\b_j$, note that for $p\in\mathbb{K}$ with $p^\ell=1$ we have
$\left[\ell \atop k\right]_p=0$ except $k=0, \ell,$
moreover $q=\langle\omega_{\b_j}^\prime,\omega_{\alpha_i}\rangle^\ell=1.$
Using Lemmas \ref{l9} and \ref{l10}, we
get
$$ {E}_{\b_i}^\ell E_j=(\mbox{ad}_q {E}_{\b_i})_L^{(\ell)}E_j+E_j {E}_{\b_i}^\ell=E_j{E}_{\b_i}^\ell.$$
The same argument shows
that this holds for the case when $ \b_i< \b_j$. Thus we
complete the proof.
\end{proof}

\begin{lemm}\label{l12}
Let  ${\b_i}\in \Phi^+$ with $h(\b_i)>1$.  For  $1\leq j\leq4$,  the following relations hold:
$$[E_{\b_i}, F_j]_1 =\left\{
\begin{array}{llllll}
 \mathbf{E_{\b}}\omega_1', \,  \mathbf{E_{\b}}\in B_{8,22}, & i=8, j=1;\\
r^{-3}(r{-}s)E_{12343}^2\omega_2, & i=8, j=2;\\
r^{-3}s^{-1}(r^{2}{-}s^2)E_{12343}E_{12332}\omega_4, & i=8, j=4;\\
a\mathbf{E_{\b}}\omega_1', \   a\neq 0,\   \mathbf{E_{\b}}\in B_{15,22}, & j=1, 9\leqslant i\leqslant 15; \\
a\mathbf{E_{\b}}\omega_2', \  a\neq 0, \   \mathbf{E_{\b}}\in B_{21,24},  & j=2, 17\leqslant i\leqslant 21;\\
aE_{\b_{k}}\omega_j ,  &   if \  \b_i=\b_{k}{+}\a_j, \b_{k}<\b_i<\a_j, \b_k\in \Phi^+;\\
aE_{\b_{k}}\omega_j' ,  & if \    \b_i=\a_j{+}\b_{k}, \a_j<\b_i<\b_k, \b_k\in \Phi^+;\\
0,   &  \textit{other\  cases}.
\end{array}
\right.$$
where $[x,y]_1=xy-yx$.
\end{lemm}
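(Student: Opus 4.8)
The plan is to induct on the height $h(\b_i)$, writing each $E_{\b_i}$ through its co-standard factorization $E_{\b_i}=[E_{\b_k},E_{\b_l}]=E_{\b_k}E_{\b_l}-p\,E_{\b_l}E_{\b_k}$ recorded in Table~I and exploiting that $[\,\cdot\,,F_j]_1$ is an \emph{ordinary} derivation, $[xy,F_j]_1=x[y,F_j]_1+[x,F_j]_1y$. The base of the induction is $(F4)$, namely $[E_j,F_j]_1=\frac{\om_j-\om_j'}{r_j-s_j}$, together with $[E_{\b_k},F_j]_1=0$ whenever $\a_j$ does not occur in $\b_k$. Expanding $[E_{\b_i},F_j]_1$ by the derivation rule thus reduces it to the inductively known commutators $[E_{\b_k},F_j]_1$ and $[E_{\b_l},F_j]_1$, after which the only remaining work is to commute the group-likes $\om_j,\om_j'$ produced by the base case to the right through the surviving quantum root vectors, using the grading relations that define $U_\zeta^+$ in subsection~3.1.

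For the three generic shapes (the last three lines of the statement) exactly one factor carries the unique copy of $\a_j$ and the other commutes with $F_j$. If $\a_j$ is peeled off on the right, $E_{\b_i}=[E_{\b_k},E_j]$ with $p=\langle\om_j',\om_{\b_k}\rangle$, the derivation rule gives $[E_{\b_i},F_j]_1=\tfrac{1}{r_j-s_j}\bigl(E_{\b_k}(\om_j-\om_j')-p(\om_j-\om_j')E_{\b_k}\bigr)$; pushing $\om_j,\om_j'$ to the right and using precisely $p=\langle\om_j',\om_{\b_k}\rangle$ cancels the $\om_j'$-contribution and leaves $a\,E_{\b_k}\om_j$. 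The mirror computation with $\a_j$ on the left, $E_{\b_i}=[E_j,E_{\b_k}]$, cancels the $\om_j$-contribution and yields $a\,E_{\b_k}\om_j'$. The grouped ranges $9\le i\le15$ (for $j=1$) and $17\le i\le21$ (for $j=2$) follow by iterating this single step: since $\a_1=\b_1$ and $\a_2=\b_{16}$ occur once and stand leftmost in the corresponding Lyndon words, every inductive step stays in the $\om_1'$- (resp. $\om_2'$-) branch, and by Theorem~\ref{t8} the accompanying factor remains inside $B_{15,22}$ (resp. $B_{21,24}$); it need not be a single root vector, which is exactly why the statement only records $\mathbf{E_\b}\in B_{15,22}$ (resp. $B_{21,24}$).

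The genuine difficulty is $i=8$, where $\b_8=2\a_1+3\a_2+4\a_3+2\a_4$ is the highest root and its factorization $E_{\b_8}=[E_{12343},E_{123432}]=[E_{\b_7},E_{\b_9}]$ has $\a_j$ occurring in \emph{both} factors for $j\in\{1,2,4\}$; here neither $[E_{\b_7},F_j]_1$ nor $[E_{\b_9},F_j]_1$ vanishes, so the derivation rule produces four terms. After commuting out the group-likes one must collapse the resulting sum of root-vector products into the stated closed forms $r^{-3}(r{-}s)E_{12343}^2\om_2$, $r^{-3}s^{-1}(r^2{-}s^2)E_{12343}E_{12332}\om_4$ and $\mathbf{E_\b}\om_1'$ with $\mathbf{E_\b}\in B_{8,22}$; this collapse is where the explicit relations of Lemmas~\ref{l1}--\ref{l6} and the $(r,s)$-Serre relations $(F5)$ enter, and the bulk of the labour is precisely this bookkeeping. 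Finally, all remaining pairs $(i,j)$—including $(i,j)=(8,3)$, for which $\b_8-\a_3$ fails to be a positive root—give $0$: by the induction and Theorem~\ref{t8} the non-surviving contributions cancel, leaving $[E_{\b_i},F_j]_1=0$.
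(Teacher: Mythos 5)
Your proposal follows essentially the same route as the paper: the paper's proof is likewise a case-by-case direct computation of $[E_{\b_i},F_j]_1$, obtained by expanding the recursively defined quantum root vectors, using that the ordinary commutator with $F_j$ is a derivation, reducing to $(F4)$ and the commutation relations of Section~3, and then pushing the group-likes to the right — with the case $i=8$ singled out for explicit treatment exactly as you describe. One small inaccuracy worth noting: for $j=2$ the factor $E_{\b_7}=E_{12343}$ in fact satisfies $[E_{12343},F_2]_1=0$ even though $\a_2$ occurs in $\b_7$ (the relevant difference $\b_7-\a_2$ is not a positive root), so only two of your four terms survive in that subcase; this makes it easier rather than harder and does not affect the validity of the argument.
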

\begin{proof}[{\bf Proof.}]
The proof can be carried out for $j=1,2,3,4$:

(I) If $j=1$, for $i=8$, we calculate
\begin{gather*}
\begin{split}
[E_{12343},F_1]_1&=-E_{2343}\omega_1';\\
[E_{123432},F_1]_1&=(r^{-2}E_{2}E_{2343}-r^2E_{2343}E_2)\omega_1'.
\end{split}
\end{gather*}
So we obtain  $[E_{12343123432},F_1]_1=\mathbf{E_{\b}}\omega_1'$ by
using $$[E_{12343},E_{2343}]=(r{+}s)^{-1}[E_{233},E_{123434}],$$
where
\begin{equation*}
\begin{split}
\mathbf{E_{\b}}&=r^{-2}s^{-1}(r{+}s)(E_{123432}E_{2343}{-}r^4s^2E_{2343}E_{123432})\\
&\quad{+}\,r^{-4}(r{+}s)^{-1}(E_2[E_{233},E_{123434}]{-}
r^6[E_{233},E_{123434}]E_2)\in
B_{15,22}.
\end{split}
\end{equation*}

For $i\neq 8$,  by direct calculation, we obtain $[E_{\b_i}, F_1]_1=a\mathbf{E_{\b}}\omega_1'$, where $\mathbf{E_{\b}}\in B_{15,22}$, $a\neq 0$
for $i=5$ and $9\leqslant i\leqslant 15$;  $a=0$ for $16\leqslant i \leqslant 24$; for $i=2,3,4,6,7$,
$[ {E}_{\b_i},F_1]_1=aE_{\b_{k}}\omega_1'$, where $\b_i=\a_1+\b_{k}$, $\a_1<\b_i<\b_k$, $\b_k\in \Phi^+$.

(II) If $j=2$, for $i>16$, by direct calculation, we obtain $[E_{\b_i}, F_2]_1=a\mathbf{E_{\b}}\omega_2'$,
where $\mathbf{E_{\b}}\in B_{21,24}$, $a\neq 0$  for $17\leqslant i\leqslant 21$;  $a=0$ for $22\leqslant i \leqslant 24$.

 For $i=2$, we get $[E_{12},F_{2}]_1=r^{-2}E_1\omega_2$  by direct calculation, furthermore, we have
$[E_{123},F_2]_c=0$ and  $[{E}_{\b_i},F_2]_1=0$ for $i=3,4,6,7,11$.
Moreover, we have
$[{E}_{\b_5},F_2]_1=[E_{12332},F_2]_c=r^{-2}E_{1233}\omega_2$ and
$[{E}_{\b_9},F_2]_1=[E_{123432},F_2]_c=r^{-2}E_{12343}\omega_2.$ By
direct calculation,  we have
$$[E_{\b_8},E_2]_1=[E_{12343123432},F_2]_1=r^{-2}(1{-}r^{-1}s)E_{12343}^2\omega_2.$$
 Moreover, we have $[E_{\b_i}, F_2]_1=0$ for $i=10,13,14$; $[E_{\b_i},F_2]_1=aE_{\b_{k}}\omega_2 ,   \b_i=\b_{k}+\a_2, \b_{k}<\b_i<\a_2, \b_k\in \Phi^+,$ for $i=12,15$.


(III) If $j=3$, for $i=2, 5,6,8,9,11,12, 15,19,21$,  $[E_{\b_i}, F_3]=0$;  for $i=3,4,7,10,13,14,17,18,20$, we have $[ {E}_{\b_i},F_3]_1=aE_{\b_{i-1}}\omega_3$ if $\b_i=\b_{i-1}+\a_3$, $\b_{i-1}<\b_i<\a_3$; for $i=23$, we have
$[ {E}_{\b_{23}},F_3]_1=[ {E}_{34},F_3]_1=aE_{4}\omega_3'$.


(IV) If $j=4$, for $i=2, 3,4,5,10,14,15, 17,18$,  $[E_{\b_i}, F_4]=0$;  for $i=6,7,9,10,11,$
$12,13,19,20,21,23$, we have $[ {E}_{\b_i},F_4]_1=aE_{\b_{i-1}}\omega_4$ if
$\b_i=\b_{k}+\a_4$, $\b_{k}<\b_i<\a_4, \b_{k}\in \Phi^+$ ; for $i=8$, we have
\begin{gather*}
\begin{split}
[ E_{\b_8},F_4]_1&=r^{-3}s^{-1}(E_{1233}E_{123432}-E_{123432}E_{1233})\omega_4\\
&=r^{-3}s^{-1}(r^2{-}s^2)E_{12343}E_{12332}\omega_4.
\end{split}
\end{gather*}


We complete the proof.
\end{proof}

\begin{prop}\label{p13}
$ {E}_{\b_i}^\ell$ $
(1\leq i\leq24)$ commutes with $F_j$ $ (1\leq j\leq4)$.
\end{prop}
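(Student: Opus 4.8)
The statement to prove is $[E_{\beta_i}^{\ell},F_j]_1=0$ for all $1\le i\le 24$ and $1\le j\le 4$. Rather than treating the many cases of Lemma \ref{l12} separately, the plan is to show \emph{uniformly} that $Z:=[E_{\beta_i}^{\ell},F_j]_1$ is a normal element of $U^+$ that squares to zero, and then to invoke the fact that $U^+$ is a domain. The only inputs needed are Proposition \ref{p11}, the shape of $[E_{\beta_i},F_j]_1$ from Lemma \ref{l12}, and the PBW basis of Theorem \ref{BaseE}.

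First I would record two facts about $E_{\beta_i}^{\ell}$. By Proposition \ref{p11} it commutes with each $E_k$, and since $U^+$ is generated by $E_1,\dots,E_4$ it is central in $U^+$. Moreover conjugation by $\omega_k$ (respectively $\omega_k'$) scales $E_{\beta_i}$ by $\langle\omega_{\beta_i}',\omega_k\rangle^{\pm1}$, a Laurent monomial in $r,s$ whose $\ell$-th power equals $1$ because $r^{\ell}=s^{\ell}=1$; hence $E_{\beta_i}^{\ell}$ commutes with every $\omega_k$ and $\omega_k'$, i.e. with all of $U^0$.

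Next I would prove that $Z$ centralises $U^+$. Applying the ordinary Jacobi identity $[[A,B]_1,C]_1=[A,[B,C]_1]_1+[[A,C]_1,B]_1$ with $A=E_{\beta_i}^{\ell}$, $B=F_j$, $C=E_k$, the term $[[E_{\beta_i}^{\ell},E_k]_1,F_j]_1$ vanishes by Proposition \ref{p11}, while $[F_j,E_k]_1=-\delta_{jk}(\omega_k-\omega_k')/(r_k-s_k)$ by $(F4)$ lies in $U^0$ and so commutes with $E_{\beta_i}^{\ell}$ by the previous step. Thus $[Z,E_k]_1=0$ for every $k$, and since the $E_k$ generate $U^+$, $Z$ commutes with all of $U^+$.

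Finally I would exploit the shape of $Z$. By Lemma \ref{l12} the commutator $[E_{\beta_i},F_j]_1$ is always of the form $\mathbf{E}_{\beta}\,g$ with $\mathbf{E}_{\beta}\in U^+$ homogeneous of weight $\beta_i-\alpha_j$ and $g$ a single group-like ($\omega_j$ or $\omega_j'$); substituting into $Z=\sum_{m=0}^{\ell-1}E_{\beta_i}^{m}[E_{\beta_i},F_j]_1E_{\beta_i}^{\ell-1-m}$ and moving $g$ to the right shows $Z=\mathbf{W}g$ with $\mathbf{W}\in U^+$ homogeneous of weight $\ell\beta_i-\alpha_j$. Centrality of $Z$ in $U^+$ then reads $u\mathbf{W}=\chi_g(u)\,\mathbf{W}u$ for homogeneous $u\in U^+$, where $\chi_g(u)$ is the scalar (depending only on the weight of $u$) defined by $gu=\chi_g(u)ug$. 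Taking $u=\mathbf{W}$ gives $(1-\chi_g(\mathbf{W}))\mathbf{W}^2=0$; here $\chi_g(\mathbf{W})=a_{jj}^{\pm1}$, computed from $\langle\omega_{\ell\beta_i-\alpha_j}',\omega_j\rangle$ (or its inverse), the contribution of $\ell\beta_i$ dropping out because $\langle\omega_{\beta_i}',\omega_j\rangle^{\ell}=1$, and $a_{jj}\in\{r^2s^{-2},rs^{-1}\}\neq1$ since $r^2\neq s^2$. Hence $\mathbf{W}^2=0$, and as $U^+$ is a domain (its PBW basis, Theorem \ref{BaseE}, exhibits it as an iterated Ore extension), we get $\mathbf{W}=0$ and $Z=0$. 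The decisive point is that relation $(F4)$ feeds only a central ($U^0$) term into the Jacobi identity, so that $Z$ inherits centrality in $U^+$; the self-pairing $a_{jj}\neq1$ then forces the normal element $\mathbf{W}$ to be nilpotent, hence zero in the domain $U^+$. This bypasses the delicate $q$-binomial and adjoint-nilpotency bookkeeping (Lemmas \ref{l9} and \ref{l10}) that the non-$q$-commuting cases of Lemma \ref{l12} would otherwise demand.
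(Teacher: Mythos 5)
Your route is genuinely different from the paper's. The paper expands $E_{\b_i}^{\ell}F_j$ with the $q$-binomial formula of Lemma \ref{l9} and kills the correction term by the adjoint-nilpotency coming from Lemmas \ref{l10} and \ref{l12}, treating $i=j$ separately via the explicit formula for $E_i^aF_i$. You instead show that $Z=[E_{\b_i}^{\ell},F_j]_1$ centralises $U^+$ (this part is correct: the Jacobi identity, Proposition \ref{p11}, relation $(F4)$, and the fact that $E_{\b_i}^{\ell}$ commutes with $U^0$ because $\langle\omega_{\b_i}',\omega_k\rangle^{\ell}=1$), write $Z=\mathbf{W}g$ via Lemma \ref{l12}, and derive $(1-a_{jj}^{\pm1})\mathbf{W}^2=0$ from self-commutation; the weight computation giving $\chi_g(\mathbf{W})=a_{jj}^{\pm1}\neq1$ is also correct. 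This is an attractive way to bypass the iterated-adjoint bookkeeping, but it is incomplete at two points.

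First, the decisive step $\mathbf{W}^2=0\Rightarrow\mathbf{W}=0$ needs $U^+$ to be a domain, and your justification --- that the PBW basis of Theorem \ref{BaseE} ``exhibits it as an iterated Ore extension'' --- is not valid as stated: a PBW-type basis alone does not force an algebra to be a domain (the restricted Borel part $\mathfrak b$ has a PBW basis with $E_i^{\ell}=0$). What is actually needed is Theorem \ref{t8}, whose convexity relations control the leading terms and allow the standard Levendorskii--Soibelman/De Concini--Kac filtration argument showing $\mathrm{gr}\,U^+$ is a quantum affine space; this fact is true but is nowhere stated or proved in the paper, so it must be supplied, at which point your proof is no shorter than the original. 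Second, Lemma \ref{l12} only covers $h(\b_i)>1$, so the uniform shape $[E_{\b_i},F_j]_1=\mathbf{E}_{\b}\,g$ with a single group-like fails exactly when $\b_i=\a_j$, where $[E_j,F_j]=(\omega_j-\omega_j')/(r_j-s_j)$ involves both $\omega_j$ and $\omega_j'$. That case needs separate treatment: either the paper's explicit formula (the coefficient $\frac{r_j^{\ell}-s_j^{\ell}}{r_j-s_j}$ vanishes), or a telescoping sum in which $\sum_{m=0}^{\ell-1}a_{jj}^{m}=0$, or a splitting $Z=\mathbf{W}_1\omega_j+\mathbf{W}_2\omega_j'$ with your normality argument applied to each component. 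Both repairs are routine, but as written the proof has these two genuine gaps.
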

\begin{proof}[{\bf Proof.}]

 For $i \neq j$, we have $(\mbox{ad}_q{E}_{\b_i})_L^{(\ell)}F_j=0$, by Lemma \ref{l12};  using Lemma \ref{l9} for $q=1,
p=\langle\omega_{\b_i}^\prime,\omega_{\b_i}\rangle
\langle\omega_{\b_i}^\prime,\omega_{j}\rangle^{-1}$, we get:
$${E}_{\b_i}^\ell F_j=F_j{E}_{\b_i}^\ell+(\mbox{ad}_q {E}_{\b_i})_L^{(\ell)}F_j=F_j{E}_{\b_i}^\ell.$$

For $i=j$, we have
$$E_i^aF_i=F_iE_i^a+\left(\frac{r_i^a-s_i^a}{r_i-s_i}\right)E_i^{a-1}\frac{s_i^{-a+1}\omega_i-r_i^{-a+1}\omega_i^\prime}{r_i-s_i}, \quad 1\leq i\leq4,$$
 for $a=\ell$, we have  $E_i^\ell F_i=F_iE_i^\ell$,
which completes the proof.
\end{proof}

\begin{theorem}\label{t14}
All $E_{\beta_i}^\ell$,
$F_{\beta_i}^\ell$ $ (1\leq i\leq24)$, and $\omega_k^\ell-1$,
$\omega_k^{\prime \ell}-1$ $ (1\leq k\leq4)$ are central in $U_{r,s}(F_4)$.
\end{theorem}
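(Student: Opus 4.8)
The plan is to reduce centrality to commutation with the algebra generators $E_j$, $F_j$, $\omega_k^{\pm1}$, $\omega_k'^{\pm1}$ $(1\le j,k\le 4)$: these generate $U_{r,s}(F_4)$, and since the centralizer of a fixed element is a subalgebra, it suffices to check that each candidate element lies in every such centralizer. For $E_{\beta_i}^\ell$ two of the three families are already settled, which is where the real work has been done: Proposition \ref{p11} gives $[E_{\beta_i}^\ell,E_j]=0$ and Proposition \ref{p13} gives $[E_{\beta_i}^\ell,F_j]=0$. Thus the only remaining point for $E_{\beta_i}^\ell$ is commutation with the group-likes.

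For the torus part I would invoke the weight grading of subsection 3.1: $E_{\beta_i}\in U_{\beta_i}^+$, so for every $\eta$ one has
$$\omega_\eta E_{\beta_i}\omega_\eta^{-1}=\langle\omega_{\beta_i}',\omega_\eta\rangle E_{\beta_i},\qquad \omega_\eta' E_{\beta_i}\omega_\eta'^{-1}=\langle\omega_{\beta_i}',\omega_\eta\rangle^{-1}E_{\beta_i}.$$
Raising to the $\ell$th power yields $\omega_\eta E_{\beta_i}^\ell\omega_\eta^{-1}=\langle\omega_{\beta_i}',\omega_\eta\rangle^\ell E_{\beta_i}^\ell$, and the key observation is that $\langle\omega_{\beta_i}',\omega_\eta\rangle$ is, by bimultiplicativity of the skew pairing of Proposition \ref{P22} together with the shape of the matrix $(a_{ij})$, a Laurent monomial in $r$ and $s$. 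Since $r^\ell=s^\ell=1$, this scalar becomes $1$ after the $\ell$th power, so $\omega_\eta E_{\beta_i}^\ell\omega_\eta^{-1}=E_{\beta_i}^\ell$ and likewise for $\omega_\eta'$. Combined with the two propositions, $E_{\beta_i}^\ell$ commutes with all generators and is therefore central.

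To pass to $F_{\beta_i}^\ell$ I would transport this through the $\mathbb{Q}$-algebra anti-automorphism $\tau$ of Definition \ref{d24}. Any bijective anti-automorphism sends central elements to central elements (if $zx=xz$ for all $x$, then $\tau(z)\tau(x)=\tau(x)\tau(z)$ for all $x$, and $\tau$ is onto); and by construction of the negative root vectors following Definition \ref{d24} one has $\tau(E_{\beta_i})=F_{\beta_i}$, so that $\tau(E_{\beta_i}^\ell)=\tau(E_{\beta_i})^\ell=F_{\beta_i}^\ell$ because $\tau$ reverses products. Hence centrality of $E_{\beta_i}^\ell$ immediately gives centrality of $F_{\beta_i}^\ell$ (the interchange of $r$ and $s$ built into $\tau$ is harmless, since the transfer of centrality is a purely algebraic consequence of $\tau$ being a bijective anti-automorphism).

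Finally, $\omega_k^\ell-1$ and $\omega_k'^\ell-1$ are handled by direct computation. By (F1) they commute with every $\omega_i^{\pm1},\omega_i'^{\pm1}$; by (F2) we have $\omega_k E_j\omega_k^{-1}=a_{kj}E_j$ and $\omega_k F_j\omega_k^{-1}=a_{kj}^{-1}F_j$, so $\omega_k^\ell E_j\omega_k^{-\ell}=a_{kj}^\ell E_j=E_j$ and $\omega_k^\ell F_j\omega_k^{-\ell}=a_{kj}^{-\ell}F_j=F_j$ since each $a_{kj}$ is a monomial in $r,s$; the analogous computation using (F3) treats $\omega_k'^\ell$. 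Thus $\omega_k^\ell$ and $\omega_k'^\ell$ are central, whence so are $\omega_k^\ell-1$ and $\omega_k'^\ell-1$. I do not expect a genuine obstacle at this stage: all of the real difficulty has been absorbed into Propositions \ref{p11} and \ref{p13} and the commutation relations of Section 3 that underlie them, and the only point requiring care is the uniform remark that every structural scalar $\langle\omega',\omega\rangle$ and every $a_{ij}$ is a monomial in $r,s$ and therefore trivializes upon taking $\ell$th powers.
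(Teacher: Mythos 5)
Your proposal is correct and follows essentially the same route as the paper, which proves the theorem by citing Propositions \ref{p11} and \ref{p13} for commutation of $E_{\beta_i}^\ell$ with the $E_j$ and $F_j$, and then applies $\tau$ to transfer the result to $F_{\beta_i}^\ell$; the commutation with the group-likes (via the observation that the pairing values are Laurent monomials in $r,s$ whose $\ell$th powers are $1$) is left implicit there but is exactly the routine check you supply. The only cosmetic slip is the order of arguments in the pairing for conjugation by $\omega_\eta'$ (it should be $\langle\omega_\eta',\omega_{\beta_i}\rangle^{-1}$ per the definition of $U_\zeta^+$), which does not affect the argument.
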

\begin{proof}[{\bf Proof.}]
It follows from Propositions \ref{p11} and \ref{p13}, together with using $\tau$.
\end{proof}

\section{Restricted two-parameter quantum groups}

\noindent{\it 4.1.} In what follows, {\it we assume that $\ell$ is
odd.} Now we define the restricted two-parameter quantum group of type
$F_4$.
\begin{defi}\label{3.17}\  The
\textit{restricted two-parameter quantum group of type $F_4$}  is the quotient
$$\mathfrak{u}_{r,s}(F_4):=U_{r,s}(F_4)/\mathcal{I},$$
where $\mathcal {I}$ is the two-sided ideal of $U_{r,s}(F_4)$ generated
by  $E_{\b_i}^\ell, F_{\b_i}^\ell$ $ (1\leq i\leq24)$ and
$\omega_k^\ell-1, \,\omega_k^{\prime \ell}-1$ $ (1\leq k\leq4)$.
\end{defi}

The following elements form a basis of $\mathfrak{u}_{r,s}(F_4)$ by
Theorem \ref{BaseE} and Corollary \ref{BaseF}:
$$E_{\b_{24}}^{c_{24}}E_{\b_{23}}^{c_{23}}\cdots E_{\b_1}^{c_1}
\omega_{1}^{b_1}\omega_{2}^{b_2}\omega_{3}^{b_3}\omega_{4}^{b_4}\omega_{1}'^{b_1^\prime}\omega_{2}'^{b_2^\prime}\omega_{3}'^{b_3^\prime}\omega_{4}'^{b_4^\prime}
F_{\b_1}^{d_{1}}F_{\b_2}^{d_{2}}\cdots F_{\b_{24}}^{d_{24}},$$ where all powers are between $0$ and $\ell{-}1$, so we
have $\mbox{dim}\  \mathfrak{u}_{r,s}(F_4)=\ell^{56}.$

\medskip \noindent{\it 4.2.} The main theorem of this section is
\begin{theorem}\label{Hopf}
The ideal $\mathcal{I}$ is a Hopf ideal, and
$\mathfrak{u}_{r,s}(F_4)$ is a finite-dimensional
Hopf algebra.
\end{theorem}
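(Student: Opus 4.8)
The plan is to verify the three defining properties of a Hopf ideal for $\mathcal{I}$: that $\varepsilon(\mathcal{I})=0$, that $S(\mathcal{I})\subseteq\mathcal{I}$, and the coideal condition $\Delta(\mathcal{I})\subseteq\mathcal{I}\otimes U+U\otimes\mathcal{I}$. Since $\mathcal{I}$ is a two-sided ideal and $\Delta,\varepsilon$ are algebra maps while $S$ is an anti-algebra map, it suffices to test these on the generators $E_{\b_i}^\ell$, $F_{\b_i}^\ell$, $\omega_k^\ell-1$, $\omega_k'^{\ell}-1$. The counit condition is immediate: each $E_{\b_i}$ (resp.\ $F_{\b_i}$) is an iterated $(r,s)$-bracket of the $E_j$ (resp.\ $F_j$), all of which have $\varepsilon=0$, and $\varepsilon(\omega_k^\ell-1)=\varepsilon(\omega_k'^{\ell}-1)=0$. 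For the group-like generators the coideal condition is also direct, since $\Delta(\omega_k^\ell-1)=(\omega_k^\ell-1)\otimes\omega_k^\ell+1\otimes(\omega_k^\ell-1)$ and likewise for $\omega_k'^{\ell}-1$.

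The heart of the matter is the identity $\Delta(E_{\b_i}^\ell)=E_{\b_i}^\ell\otimes1+\omega_{\b_i}^\ell\otimes E_{\b_i}^\ell$, together with its image under the anti-automorphism $\tau$ of Definition \ref{d24} for $F_{\b_i}^\ell$; once this holds, both summands lie in $\mathcal{I}\otimes U+U\otimes\mathcal{I}$ because $E_{\b_i}^\ell\in\mathcal{I}$, so the coideal condition follows at once. For a simple root I would compute directly: from $\Delta(E_i)=E_i\otimes1+\omega_i\otimes E_i$ and relation $(F2)$ the two summands $a_{ii}$-commute, so the quantum binomial theorem yields $\Delta(E_i^\ell)=\sum_{k=0}^\ell\left[\ell\atop k\right]_{a_{ii}}E_i^{\ell-k}\omega_i^k\otimes E_i^k$, and the same vanishing of Gaussian binomial coefficients $\left[\ell\atop k\right]_{a_{ii}}=0$ for $0<k<\ell$ that underlies Proposition \ref{p11} (where the relevant parameter for $E_i$ is exactly $\langle\omega_i',\omega_i\rangle=a_{ii}$) leaves only the two extreme terms.

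For a non-simple $\b_i$ I would argue by induction along the convex order of Table~I. Writing $E_{\b_i}=[E_\a,E_\b]$ for the minimal pair and using that $\Delta$ is multiplicative together with the Leibniz rules \ref{L}, one obtains $\Delta(E_{\b_i})=E_{\b_i}\otimes1+\omega_{\b_i}\otimes E_{\b_i}+\sum_\mu x_\mu\otimes y_\mu$, where each $x_\mu,y_\mu$ is, up to a factor from $U^0$, a product of root vectors $E_{\b_k}$ of weight strictly between $0$ and $\b_i$. Raising this to the $\ell$-th power, reordering factors by means of Lemma \ref{l9} and the commutation relations of Section~3, and repeatedly invoking the centrality of all $\ell$-th powers from Theorem \ref{t14}, I expect every genuinely mixed term to collect a factor $E_{\b_k}^\ell$, hence to lie in $\mathcal{I}\otimes U+U\otimes\mathcal{I}$, while the pure terms reassemble into $E_{\b_i}^\ell\otimes1+\omega_{\b_i}^\ell\otimes E_{\b_i}^\ell$; here again the vanishing of the relevant Gaussian binomials is what eliminates the mixed contributions.

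The antipode condition is then formal: applying $S$ to the coproduct identity and using $m(S\otimes\mathrm{id})\Delta=\varepsilon$ gives $S(E_{\b_i}^\ell)=-\omega_{\b_i}^{-\ell}E_{\b_i}^\ell\in\mathcal{I}$, and $S(\omega_k^\ell-1)=-\omega_k^{-\ell}(\omega_k^\ell-1)\in\mathcal{I}$. Finite-dimensionality follows from the PBW-type bases of Theorem \ref{BaseE} and Corollary \ref{BaseF}: in the quotient every exponent is confined to $\{0,\dots,\ell-1\}$, yielding the spanning monomials already displayed after Definition \ref{3.17} and $\dim\mathfrak{u}_{r,s}(F_4)=\ell^{56}$. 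The main obstacle is precisely the non-simple inductive step for the high-weight roots---most severely $\b_8$ of height $11$---where the number of intermediate terms in $\Delta(E_{\b_i})^\ell$ is large and one must check that each mixed term truly absorbs an $\ell$-th power; this is where the detailed relations of Section~3 and the binomial vanishing must be marshalled together.
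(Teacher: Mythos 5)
Your proposal follows essentially the same route as the paper: reduce to generators, handle the group-likes directly, use the quantum binomial theorem with parameter $a_{ii}=r_is_i^{-1}$ for simple roots, argue by induction along the convex order that the mixed terms in $\Delta(E_{\b_i})^\ell$ absorb $\ell$-th powers of lower root vectors, deduce $S(\mathcal{I})\subseteq\mathcal{I}$ from $m(S\otimes\mathrm{id})\Delta=\varepsilon$, transfer to the $F$'s via $\tau$, and get finite-dimensionality from the PBW bases. The only caveat is that your opening claim $\Delta(E_{\b_i}^\ell)=E_{\b_i}^\ell\otimes1+\omega_{\b_i}^\ell\otimes E_{\b_i}^\ell$ holds exactly only for simple roots and should be read modulo $\mathcal{I}\otimes U+U\otimes\mathcal{I}$ in general (the paper keeps an explicit remainder $(\ast)$), which you in effect acknowledge later.
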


To prove that $\mathfrak{u}_{r,s}(F_4)$ is a finite-dimensional Hopf
algebra,  it suffices to prove that $\mathcal {I}$ is a Hopf ideal.
We observe that $\varepsilon(\mathcal {I})=0$, and the co-multiplication
$\Delta$ (resp. antipode $S$) is a homomorphism (resp.
anti-homomorphism) of algebras, so it remains to show that
$\Delta(x)\in \mathcal{I}\otimes U+U\otimes \mathcal {I}$ and
$S(x)\in \mathcal {I}$ hold for each generator $x$. We first
calculate:
\begin{equation*}
\begin{split}
\Delta(\omega_k^\ell-1)&=\omega_k^\ell \otimes\omega_k^\ell-1\otimes1\\
                    &=(\omega_k^\ell-1)\otimes\omega_k^\ell+1\otimes(\omega_k^\ell-1)\\
                    &\in\mathcal{I}\otimes
U+U\otimes \mathcal {I};\\
S(\omega_k^\ell-1)&=-\omega_k^{-\ell}(\omega_k^\ell-1)\in\mathcal {I}.
\end{split}
\end{equation*}
The same argument shows that $\Delta(\omega_k'^\ell-1)\in \mathcal{I}\otimes
U+U\otimes \mathcal {I} $ and $S(\omega_k'^\ell-1)\in \mathcal{I}$.

 The proof of Theorem \ref{Hopf} will be done through the
following Proposition on the formulae of quantum root vectors under
the co-multiplication.

The following lemma is useful for deriving the formulae of non-simple
root vectors under the co-multiplication.

\begin{lemm}\label{LC} Let $X, Y, Z$ be elements of a  $\mathbb{K}$-algebra such that  $XY=\a
YX+Z$  for some $\a \in \mathbb{K}^*$ and $m$ a natural number. Then
the following assertions hold

$(1)$ If  $ZY=\b YZ$ for $\b(\neq \a)  \in \mathbb{K}$, then
$XY^m=\a^mY^mX+\frac{\a^m-\b^m}{\a-\b}Y^{m-1}Z$;

$(2)$ If  $XZ=\b ZX$ for $\b(\neq \a)  \in \mathbb{K}$, then
$X^mY=\a^mYX^m+\frac{\a^m{-}\b^m}{\a{-}\b}ZX^{m{-}1}$;

$(3)$ If  $ZY=\a^2 YZ$, $XZ=\a^2 ZX$, $\a^2\neq 1$, then
$$(X+Y)^m=\sum\limits_{m_1,m_2,m_3\in Z^+,\atop{ m_1+2m_2+m_3=l}}\frac{[m]_\a!}{[m_1]_\a![m_3]_\a![2m_2]_\a!!}Y^{m_1}Z^{m_2}X^{m_3},$$
where $[m]_\a=\frac{1{-}\a^m}{1{-}\a}$.
\end{lemm}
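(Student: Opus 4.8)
The plan is to prove (1) and (2) by a direct induction on $m$, and then to deduce (3) from (1) together with the quasi-commutation hypotheses by an induction that normal-orders every monomial into the shape $Y^{m_1}Z^{m_2}X^{m_3}$; throughout, the summation in (3) is read as running over $m_1+2m_2+m_3=m$.

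For (1) I would induct on $m$, the case $m=1$ being precisely the defining relation $XY=\alpha YX+Z$. Assuming the formula for $m$, I multiply it on the right by $Y$ and push the single $X$ and the single $Z$ through using $XY=\alpha YX+Z$ and $ZY=\beta YZ$. Collecting the resulting $Y^{m}Z$ terms, the scalar becomes $\alpha^{m}+\beta\frac{\alpha^{m}-\beta^{m}}{\alpha-\beta}=\frac{\alpha^{m+1}-\beta^{m+1}}{\alpha-\beta}$, which is exactly the claimed coefficient. Part (2) is the mirror image: induct on $m$, multiply on the left by $X$, and push $X$ through using $XY=\alpha YX+Z$ and $XZ=\beta ZX$; the same telescoping $\alpha^{m}+\beta\frac{\alpha^{m}-\beta^{m}}{\alpha-\beta}$ produces the coefficient of $ZX^{m}$. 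Both steps are routine and use only $\beta\neq\alpha$ so that the geometric sums are well defined.

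For (3) I would induct on $m$, writing $(X+Y)^{m+1}=X(X+Y)^{m}+Y(X+Y)^{m}$ and inserting the inductive expansion. The term $Y\cdot Y^{m_1}Z^{m_2}X^{m_3}$ is immediate. To treat $X\cdot Y^{m_1}Z^{m_2}X^{m_3}$ I would first apply (1) with $\beta=\alpha^{2}$, which is legitimate since $\alpha^{2}\neq1$ forces $\alpha^{2}\neq\alpha$, giving $XY^{m_1}=\alpha^{m_1}Y^{m_1}X+\alpha^{m_1-1}[m_1]_\alpha Y^{m_1-1}Z$; then I move the freed $X$ past $Z^{m_2}$ via $XZ^{m_2}=\alpha^{2m_2}Z^{m_2}X$. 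This rewrites $X\cdot Y^{m_1}Z^{m_2}X^{m_3}$ as $\alpha^{m_1+2m_2}Y^{m_1}Z^{m_2}X^{m_3+1}+\alpha^{m_1-1}[m_1]_\alpha Y^{m_1-1}Z^{m_2+1}X^{m_3}$, so that each normal-ordered monomial $Y^{a}Z^{b}X^{c}$ with $a+2b+c=m+1$ receives exactly three contributions: $c_{a-1,b,c}$ from the $Y$-factor, $\alpha^{a+2b}c_{a,b,c-1}$ from the first piece above, and $\alpha^{a}[a+1]_\alpha c_{a+1,b-1,c}$ from the second.

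Collecting these three contributions, the coefficient recursion reduces, after cancelling the common factor $[m]_\alpha!/([a]_\alpha![c]_\alpha![2b]_\alpha!!)$, to the scalar identity $[a]_\alpha+\alpha^{a}[2b]_\alpha+\alpha^{a+2b}[c]_\alpha=[a+2b+c]_\alpha=[m+1]_\alpha$, which is immediate from $[n]_\alpha=1+\alpha+\cdots+\alpha^{n-1}$ by concatenating the three geometric blocks. I expect the main obstacle to be the bookkeeping in (3): keeping the double factorial $[2b]_\alpha!!$ straight under the shift $b\mapsto b-1$ via $[2b]_\alpha!!=[2b]_\alpha[2b-2]_\alpha!!$, and verifying that the boundary cases $a=0$, $b=0$, $c=0$ are absorbed automatically by the convention $[0]_\alpha=0$, which annihilates precisely the contribution that would otherwise demand a negative exponent. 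Once these edge cases are seen to be handled uniformly by the vanishing of quantum integers at $0$, the induction closes and the formula follows.
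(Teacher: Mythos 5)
Your proof is correct. Note, however, that the paper does not actually prove this lemma: it disposes of (1) and (2) by citing \cite{HW2} and of (3) by pointing to the proof of Lemma 1 in \cite{B}, so there is no in-text argument to compare against. What you supply is a complete, self-contained verification in the same spirit as those references: the inductions for (1) and (2) are the standard ones, with the telescoping $\a^m+\b\frac{\a^m-\b^m}{\a-\b}=\frac{\a^{m+1}-\b^{m+1}}{\a-\b}$ correctly identified, and your derivation of (3) from (1) with $\b=\a^2$ (legitimate, since $\a^2\ne 1$ and $\a\ne 0$ give $\a^2\ne\a$) is sound. The three-term coefficient recursion you obtain reduces, exactly as you say, to $[a]_\a+\a^{a}[2b]_\a+\a^{a+2b}[c]_\a=[a+2b+c]_\a$, which is the concatenation of three geometric blocks, and the boundary cases $a=0$, $b=0$, $c=0$ are indeed absorbed by the convention $[0]_\a=0$ together with $[2b]_\a!!=[2b]_\a[2b-2]_\a!!$. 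Your reading of the summation condition as $m_1+2m_2+m_3=m$ (the displayed $l$ is a typo in the statement) is the intended one. The only thing your write-up adds beyond the cited sources is transparency; nothing is missing.
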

\begin{proof}[{\bf Proof.}]
(1) and  (2) follow from \cite{HW2}.

(3) For details, see the proof of Lemma 1 in \cite{B}.
\end{proof}

\begin{prop} For $\b_i\in \Phi^+$, we have
\begin{equation*}
\begin{split}
&(1) \quad \Delta(E_{\b_i}^\ell)\in \mathcal{I}\otimes U+U\otimes
\mathcal{I},\quad  S(E_{\b_i}^\ell)\in \mathcal{I};\\
&(2) \quad \Delta(F_{\b_i}^\ell)\in \mathcal{I}\otimes U+U\otimes
\mathcal{I},\quad  S(F_{\b_i}^\ell)\in \mathcal{I}.
\end{split}
\end{equation*}
\end{prop}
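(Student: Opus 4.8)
The plan is to prove assertion (1) for the $E$-root vectors and then deduce (2) for free from the algebra anti-automorphism $\tau$ of Definition \ref{d24}. Since $\tau^2=\mathrm{id}$, $\tau(E_{\b_i})=F_{\b_i}$, $\tau(r)=s$, $\tau(s)=r$, and $\tau$ permutes the generators of $\mathcal I$, we have $\tau(\mathcal I)=\mathcal I$; moreover a check on generators gives $\Delta\circ\tau=\sigma\circ(\tau\ot\tau)\circ\Delta$, where $\sigma$ is the tensor flip. As $\tau(E_{\b_i}^\ell)=F_{\b_i}^\ell$, applying $\sigma(\tau\ot\tau)$ to the containment $\Delta(E_{\b_i}^\ell)\in\mathcal I\ot U+U\ot\mathcal I$ and using $\sigma(\mathcal I\ot U)=U\ot\mathcal I$ yields the same containment for $F_{\b_i}^\ell$; the antipode statement transports likewise. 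So I would work only with $E_{\b_i}^\ell$. Because $\Delta$ is an algebra homomorphism and $S$ an anti-homomorphism, $\Delta(E_{\b_i}^\ell)=\Delta(E_{\b_i})^\ell$ and $S(E_{\b_i}^\ell)=S(E_{\b_i})^\ell$, so everything reduces to analysing the single element $\Delta(E_{\b_i})\in U\ot U$ and its $\ell$-th power.

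First I would pin down the shape of $\Delta(E_{\b_i})$ by induction on the height $h(\b_i)$. For a simple root this is the given skew-primitive form $\Delta(E_i)=E_i\ot 1+\om_i\ot E_i$. For a non-simple $\b_k$ with co-standard factorization $u_k=u_iu_j$, so that $E_{\b_k}=[E_{\b_i},E_{\b_j}]$, I would compute $\Delta(E_{\b_k})=[\Delta(E_{\b_i}),\Delta(E_{\b_j})]$ from the inductive forms, reducing all products by the commutation relations collected in Lemmas \ref{l1}--\ref{l6}. In each case the result is the triangular form
\[
\Delta(E_{\b_i})=E_{\b_i}\ot 1+\om_{\b_i}\ot E_{\b_i}+\sum_{0<\mu<\b_i}a_\mu\ot b_\mu ,
\]
where the middle terms have $a_\mu\in(U^+)_{\b_i-\mu}\,U^0$ and $b_\mu\in(U^+)_\mu$ of strictly intermediate positive weight.

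Next I would compute the $\ell$-th power. Grouping $\Delta(E_{\b_i})=X+Y$ with $X=E_{\b_i}\ot 1$ and $Y$ the remaining terms, the key point is that $X$, $Y$ and $Z:=XY-\al YX$ satisfy the hypotheses of Lemma \ref{LC}(3): $ZY=\al^2YZ$, $XZ=\al^2ZX$, where $\al$ is the structure-constant scalar $\langle\om'_{\b_i},\om_{\b_i}\rangle^{-1}$-type quantity relating $X$ and $Y$, and $\al^2\neq1$ by the standing assumptions $r^2\neq s^2$, $r^3\neq s^3$. Verifying these $\al^2$-commutations is exactly where the Section-3 identities re-enter. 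Lemma \ref{LC}(3) then gives
\[
\Delta(E_{\b_i})^\ell=\sum_{m_1+2m_2+m_3=\ell}\frac{[\ell]_\al!}{[m_1]_\al!\,[m_3]_\al!\,[2m_2]_\al!!}\,Y^{m_1}Z^{m_2}X^{m_3}.
\]
Since $\al^\ell=1$ the numerator $[\ell]_\al!$ carries exactly one zero factor; because $\ell$ is \emph{odd}, $[2m_2]_\al!!$ has no zero factor when $0<2m_2<\ell$, while $[m_1]_\al!$ and $[m_3]_\al!$ vanish only at $m_1=\ell$ and $m_3=\ell$ respectively --- precisely the vanishing of $\left[{\ell\atop k}\right]_p$ exploited in Proposition \ref{p11}. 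Hence every coefficient vanishes except the two extreme terms, leaving $\Delta(E_{\b_i})^\ell=X^\ell+Y^\ell=E_{\b_i}^\ell\ot 1+\om_{\b_i}^\ell\ot E_{\b_i}^\ell$; here $Y^\ell=\om_{\b_i}^\ell\ot E_{\b_i}^\ell$ is obtained by iterating the same argument, peeling the middle terms off $Y$ one at a time. As $E_{\b_i}^\ell\in\mathcal I$ (central by Theorem \ref{t14}), both summands lie in $\mathcal I\ot U+U\ot\mathcal I$, proving the coproduct claim; equivalently, the image of $\Delta(E_{\b_i}^\ell)$ in $(U/\mathcal I)\ot(U/\mathcal I)$ is $0$.

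Finally, for the antipode I would use that $\om_{\b_i}^\ell\equiv1\pmod{\mathcal I}$, so the computation above shows $E_{\b_i}^\ell$ is primitive modulo $\mathcal I\ot U+U\ot\mathcal I$; combined with $\varepsilon(E_{\b_i}^\ell)=0$ and the antipode axiom this forces $S(E_{\b_i}^\ell)\equiv-E_{\b_i}^\ell$, whose right side is in $\mathcal I$. (Equivalently, $S(E_{\b_i})=-\om_{\b_i}^{-1}E_{\b_i}+(\text{lower middle terms})$, whose $\ell$-th power collapses by the same application of Lemma \ref{LC}(3) to a grouplike multiple of $E_{\b_i}^\ell$ modulo $\mathcal I$.) I expect the main obstacle to be the two middle steps for the tall $F_4$ root vectors, above all $\b_8=E_{12343123432}$ of height $11$: putting its coproduct in the triangular form and then verifying, for the induced $(X,Y,Z)$ and its iterates, the $\al^2$-commutation hypotheses of Lemma \ref{LC}(3) is a lengthy bookkeeping that leans on essentially all of the commutator identities of Lemmas \ref{l1}--\ref{l6} and on Theorems \ref{t7}--\ref{t8}.
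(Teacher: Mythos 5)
Your proposal follows the paper's proof essentially verbatim: reduce (2) to (1) via $\tau$, put $\Delta(E_{\b_i})$ in triangular form by induction on height, kill the cross terms of the $\ell$-th power using Lemma \ref{LC}(3) together with the vanishing of the $q$-binomial coefficients for $\ell$ odd, and deduce $S(E_{\b_i}^\ell)\in\mathcal I$ from the counit axiom plus induction on height. The only cosmetic difference is the grouping --- the paper first peels off $E_{\b_i}^\ell\ot 1$ by a plain $q$-binomial argument (that term $q$-commutes exactly with the rest, so $Z=0$ there) and only then invokes Lemma \ref{LC}(3) on what remains, whereas you start from $X=E_{\b_i}\ot 1$ --- and both versions defer the same ``lengthy bookkeeping'' for the tall root vectors, exactly as the paper's Step 3 does.
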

\begin{proof}[{\bf Proof.}]
(1) The proof $\Delta(E_{\b_i}^\ell)\in \mathcal{I}\otimes U+U\otimes
\mathcal{I}$ can be carried out in three steps.

Step 1.  Let $\b_i=\a_i$. Note that $\Delta(E_i)=E_i\otimes1+\omega_i\otimes
E_i$ and $(\omega_i\otimes E_i)(E_i\otimes1)=r_is_i^{-1}(E_i\otimes1)(\omega_i\otimes
E_i)$ for $1\leqslant i\leqslant 4$,  so
$$\Delta(E_i^n)=\sum_{j=0}^n\left[n\atop j\right]_{r_is_i^{-1}}E_i^{n-j}\omega_i^j\otimes E_i^j.$$
We have
$$\Delta(E_i^\ell)=E_i^\ell\otimes1+\omega_i^\ell \otimes E_i^\ell\in\mathcal{I}\otimes
U+U\otimes \mathcal {I}.$$

Step 2.  Let $\b_i=\a_j+\a_{j+1}$ for $1\leqslant j\leqslant 3$ and write $E_{\b_i}=E_{jj+1}$. Note that
$$\Delta(E_{jj+1})=E_{jj+1}\otimes1+\omega_{jj+1}\otimes E_{jj+1}+(1{-}a_{jj+1}a_{j+1j})E_j\omega_{j+1}\otimes E_{j+1},$$
and
\begin{equation*}
\begin{split}
&(\omega_{jj{+}1}\otimes E_{jj{+}1}+(1{-}a_{jj{+}1}a_{j{+}1j})E_j\omega_{j{+}1}\otimes E_j)(E_{jj{+}1}\otimes1)\\
&\
=a_{jj}a_{jj{+}1}a_{j{+}1j}a_{j{+}1j{+}1}(E_{jj{+}1}{\otimes}1)(\omega_{jj{+}1}{\otimes}
E_{jj{+}1}{+}(1{-}a_{jj{+}1}a_{j{+}1j})E_j\omega_{j{+}1}{\otimes}
E_{j{+}1}),
\end{split}
\end{equation*}
and $a_{jj}a_{jj+1}a_{j+1j}a_{j+1j+1}=r^2s^{-2}$ or $rs^{-1}$.
So
$$\Delta(E_{jj+1}^\ell)=E_{jj+1}^\ell\otimes1+\left(\omega_{jj+1}\otimes E_{jj+1}+(1{-}a_{jj+1}a_{j+1j})E_j\omega_{j+1}\otimes E_{j+1}\right)^\ell.$$

Now we calculate $\left(\omega_{jj+1}\otimes
E_{jj+1}+(1{-}a_{jj+1}a_{j+1j})E_j\omega_{j+1}\otimes
E_{j+1}\right)^\ell$, let $X=\omega_{jj+1}\otimes E_{jj+1},
Y=E_j\omega_{j+1}\otimes E_{j+1}.$

By a simple computation, we have
$XY=a_{jj}a_{jj+1}a_{j+1j}a_{j+1j+1}YX+Z_j$, where $Z_1=0,
Z_2=s^{-2}E_2\omega_{233}\otimes E_{233}, Z_3=0$. So
$\Delta(E_{12}^\ell),\Delta(E_{34}^\ell) \in \mathcal{I}\otimes
U+U\otimes \mathcal{I}$. We only need to consider $j=2$. In this case, we
have $XZ_2=r^2s^{-2}Z_2X, Z_2Y=r^2s^{-2}YZ_2$. Using Lemma \ref{LC}
(3), we know that each monomial in the expansion of $(X+Y)^\ell$ is of
the form
$$Y^{\ell_1}Z^{\ell_2}X^{l_3},\quad \ell_1+2\ell_2+\ell_3=\ell,$$
and note that the coefficient of $Y^{\ell_1}Z^{\ell_2}X^{\ell_3}$ is
$\frac{[\ell]_{rs^{-1}}!}{[\ell_1]_{rs^{-1}}![\ell_3]_{rs^{-1}}![2\ell_2]_{rs^{-1}}!!}.$
So we have $(X+Y)^\ell=X^\ell+Y^\ell$ by assumptions
$[\ell]_{rs^{-1}}=0$ and $\ell$ being odd. Thus,
$\Delta(E_{23}^\ell)\in \mathcal {I}\otimes U+U\otimes \mathcal
{I}$.

Step 3. Similarly, the results hold for the other quantum root vectors.

In order to show that $S(E_{\b_i}^\ell)\in\mathcal{I}$, we use an
induction of height of positive root $\b_i$. It is trivial when height is $1$, but we have
$$\Delta(E_{\b_i}^\ell)=E_{\b_i}^\ell\otimes1+\omega_{\b_i}^\ell\otimes E_{\b_i}^\ell+(\ast),$$
where $(\ast)$ is a linear combination of the terms $A\otimes B$, $A$
and $B$ are monomials  of $\omega_k^\ell$ $ (1\leq k\leq4), E_{\b_j}^\ell
(\mbox{ht} \b_j<\mbox{ht} \b_i)$, by the induction
hypothesis, we have $S(E_{\b_j}^\ell)\in\mathcal {I}$, moreover,
$$0=\varepsilon(E_{\b_i}^\ell)1=\mu\circ(S\otimes\mbox{id})\circ\Delta(E_{\b_i}^\ell),$$
by the property of antipode, so we get
$S(E_{\b_i}^\ell)+\omega_{\b_i}^{-\ell}E_{\b_i}^\ell\in\mathcal{I}$,
i.e., $S(E_{\b_i}^\ell)\in\mathcal{I}$.

(2) Using $\tau$, we find that $\Delta(F_{\b_i}^\ell)\in \mathcal{I}\otimes U+U\otimes
\mathcal{I}, S(F_{\b_i}^\ell)\in \mathcal{I}$.
\end{proof}

\section{Isomorphisms  of $\mathfrak{u}_{r,s}(F_4)$}


Write  $\mathfrak{u}=\mathfrak{u}_{r,s}=\mathfrak{u}_{r,s}(F_4)$.
Let $G$ denote the group generated by $\omega_i$, $\omega_i^\prime$
$( 1\leqslant i\leqslant 4)$.  We define subspaces
$\{\mathfrak{a}_k\}_{k=0}^\infty$ of $\mathfrak{u}$ as follows:
\begin{gather*}
\begin{split}\mathfrak{a}_0&=\mathbb{K}G, \quad \mathfrak{a}_1=\mathbb{K}G+\sum_{i=1}^4(\mathbb{K}E_iG+\mathbb{K}F_iG),\\
\mathfrak{a}_k&=(\mathfrak{a}_1)^k, \quad k\geq 1.
\end{split}\tag{5.1}
\end{gather*}

Note that $1\in\mathfrak{a}_0, \Delta(\mathfrak{a}_0)\subseteq
\mathfrak{a}_0\otimes\mathfrak{a}_0$, and $\mathfrak{a}_1$ generates
$\mathfrak{u}$ as algebra, moreover, we have
$\Delta(\mathfrak{a}_1)\subseteq
\mathfrak{a}_1\otimes\mathfrak{a}_0+\mathfrak{a}_0\otimes\mathfrak{a}_1$,
so $\{\mathfrak{a}_k\}_{k=0}^\infty$ is a coalgebra filtration of
$\mathfrak{u}$ and $\mathfrak{u}_0\subseteq\mathfrak{a}_0$ by
\cite{M}, where $\mathfrak{u}_0$ is the coradical of $\mathfrak{u}$,
which is the sum of all simple subcoalgebras of $\mathfrak{u}$. On
the other hand, any element of $G$ spans a simple subcoalgebra of
dimension $1$, so $\mathfrak{a}_0$ is the sum of some simple
subcoalgebras of
 $\mathfrak{u}$, hence $\mathfrak{a}_0\subseteq\mathfrak{u}_0$, therefore
 $\mathfrak{u}_0\subseteq\mathfrak{a}_0=\mathbb{K}G$.
That is to say, all simple subcoalgebras of $\mathfrak{u}$ are of
$1$-dimensional, so $\mathfrak{u}$ is a finite-dimensional pointed
Hopf algebra.

Let $\mathfrak{b}$ be the Hopf subalgebra of
$\mathfrak{u}=\mathfrak{u}_{r,s}(F_4)$ generated by $E_i$,
$\omega_i^{\pm1}$ $ (1\leqslant i\leqslant 4)$,
$\mathfrak{b}^\prime$ the Hopf subalgebra generated by $F_i$,
$(\omega_i^\prime)^{\pm1}$ $ (1\leqslant i\leqslant 4)$. The same
argument shows that $\mathfrak{b}$ and $\mathfrak{b}^\prime$ are
pointed Hopf algebras. We have
\begin{prop}\label{p51}
The restricted two-parameter quantum group $\mathfrak{u}_{r,s}(F_4)$
and its Hopf subalgebras $\mathfrak{b}$ and $\mathfrak{b}^\prime$
are finite-dimensional pointed Hopf algebras. \hfill\qed
 \end{prop}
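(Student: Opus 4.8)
The plan is to treat the two assertions separately. Finite-dimensionality is essentially already in hand: by Theorem~\ref{BaseE} and Corollary~\ref{BaseF} the ordered PBW monomials listed in Section~4.1, with every exponent ranging over $\{0,1,\dots,\ell-1\}$, form a basis of $\mathfrak{u}_{r,s}(F_4)$, so that $\dim\mathfrak{u}_{r,s}(F_4)=\ell^{56}<\infty$; the analogous truncated PBW monomials span the Hopf subalgebras $\mathfrak{b}$ and $\mathfrak{b}'$, which are therefore finite-dimensional as well. Since Theorem~\ref{Hopf} already guarantees that these are Hopf algebras, the substance of the proposition is the claim that each is \emph{pointed}, i.e.\ that its coradical is spanned by group-like elements.

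To prove pointedness I would exhibit a coalgebra filtration whose bottom layer is exactly $\mathbb{K}G$ and then invoke the general principle that the coradical lies in the bottom of any coalgebra filtration. Concretely, take the subspaces $\{\mathfrak{a}_k\}_{k\geq 0}$ of (5.1), with $\mathfrak{a}_0=\mathbb{K}G$, $\mathfrak{a}_1=\mathbb{K}G+\sum_{i=1}^4(\mathbb{K}E_iG+\mathbb{K}F_iG)$, and $\mathfrak{a}_k=(\mathfrak{a}_1)^k$. Three things must be verified: that $1\in\mathfrak{a}_0$ and $\Delta(\mathfrak{a}_0)\subseteq\mathfrak{a}_0\otimes\mathfrak{a}_0$ (immediate, since every $g\in G$ is group-like); that $\mathfrak{a}_1$ generates $\mathfrak{u}_{r,s}(F_4)$ as an algebra, so that $\bigcup_k\mathfrak{a}_k=\mathfrak{u}_{r,s}(F_4)$ (clear, as $\mathfrak{a}_1$ contains all the algebra generators $E_i,F_i$ and $G$); and the crucial compatibility $\Delta(\mathfrak{a}_1)\subseteq\mathfrak{a}_1\otimes\mathfrak{a}_0+\mathfrak{a}_0\otimes\mathfrak{a}_1$.

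The last condition is the only real point, and hence the main obstacle, although it reduces to a short direct computation on the spanning set $G\cup\{E_ig\}\cup\{F_ig\}$ of $\mathfrak{a}_1$. Using $\Delta(E_i)=E_i\otimes1+\omega_i\otimes E_i$ and $\Delta(F_i)=1\otimes F_i+F_i\otimes\omega_i'$ together with the group-likeness of $g$, one finds $\Delta(E_ig)=E_ig\otimes g+\omega_ig\otimes E_ig$ and $\Delta(F_ig)=g\otimes F_ig+F_ig\otimes\omega_i'g$; since $E_ig,F_ig\in\mathfrak{a}_1$ while $g,\omega_ig,\omega_i'g\in G\subseteq\mathfrak{a}_0$, both expressions lie in $\mathfrak{a}_1\otimes\mathfrak{a}_0+\mathfrak{a}_0\otimes\mathfrak{a}_1$, and the group-like part of $\mathfrak{a}_1$ is handled by the first condition. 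Granting this, $\{\mathfrak{a}_k\}$ is a coalgebra filtration, so by \cite{M} the coradical satisfies $\mathfrak{u}_0\subseteq\mathfrak{a}_0=\mathbb{K}G$. For the reverse inclusion, each $g\in G$ spans a one-dimensional, hence simple, subcoalgebra, so $\mathbb{K}G$ is a sum of simple subcoalgebras and therefore $\mathbb{K}G\subseteq\mathfrak{u}_0$. Thus $\mathfrak{u}_0=\mathbb{K}G$, every simple subcoalgebra of which is one-dimensional, and $\mathfrak{u}_{r,s}(F_4)$ is pointed. Running the identical argument with the generators restricted to $E_i,\omega_i^{\pm1}$ (respectively $F_i,{\omega_i'}^{\pm1}$) shows that $\mathfrak{b}$ and $\mathfrak{b}'$ are pointed as well.
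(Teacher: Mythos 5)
Your proposal is correct and follows essentially the same route as the paper: the paper also establishes pointedness via the coalgebra filtration $\{\mathfrak{a}_k\}$ of (5.1), deduces $\mathfrak{u}_0\subseteq\mathfrak{a}_0=\mathbb{K}G$ from \cite{M}, and obtains the reverse inclusion from the fact that each $g\in G$ spans a one-dimensional simple subcoalgebra, with finite-dimensionality coming from the PBW basis of Section 4.1. Your explicit verification of $\Delta(E_ig)$ and $\Delta(F_ig)$ only spells out a step the paper leaves implicit.
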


 It follows from [{\bf M}, Lemma 5.5.1] that
$\mathfrak{a}_k\subseteq \mathfrak{u}_k$ for all $k$, where
$\{\mathfrak{u}_k\}$ is the \textit{coradical filtration} of
$\mathfrak{u}$ defined inductively by
$\mathfrak{u}_k=\Delta^{-1}(\mathfrak{u}\otimes
\mathfrak{u}_{k-1}+\mathfrak{u}_0\otimes \mathfrak{u}).$ In
particular, $\mathfrak{a}_1\subseteq \mathfrak{u}_1.$ By [{\bf M},
Theorem 5.4.1], as $\mathfrak{u}$ is pointed, $\mathfrak{u}_1$ is
spanned by the set of group-like elements $G$, together with all the
skew-primitive elements of $\mathfrak{u}$. We
have

\begin{lemm}\label{l52}
Assume that $rs^{-1}$ is $\ell$th primitive root of unity, then
$$\mathfrak{u}_1=\mathbb{K}G+\sum_{i=1}^4(\mathbb{K}E_iG+\mathbb{K}F_iG).$$
\end{lemm}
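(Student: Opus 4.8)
The plan is to prove the two inclusions of the claimed equality separately; write $\mathfrak{a}_1=\mathbb{K}G+\sum_{i=1}^4(\mathbb{K}E_iG+\mathbb{K}F_iG)$ as in (5.1). The inclusion $\mathfrak{a}_1\subseteq\mathfrak{u}_1$ is already recorded above: each $E_i$ is $(1,\om_i)$-skew-primitive and each $F_i$ is $(\om_i',1)$-skew-primitive, so $\mathfrak{a}_1$ is spanned by $G$ and skew-primitive elements, all of which lie in $\mathfrak{u}_1$. For the reverse inclusion I would invoke [{\bf M}, Theorem 5.4.1], which says that, $\mathfrak{u}$ being pointed, $\mathfrak{u}_1$ is spanned by $G$ together with \emph{all} skew-primitive elements of $\mathfrak{u}$; hence it suffices to show that every skew-primitive element lies in $\mathfrak{a}_1$.

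I would make two reductions. First, grade $\mathfrak{u}$ by the root lattice $Q=\sum_{i=1}^4\mathbb{Z}\a_i$ via $\deg E_i=\a_i$, $\deg F_i=-\a_i$, $\deg\om_i=\deg\om_i'=0$; this is an algebra grading for which $\Delta$ preserves total degree, so each homogeneous component of a skew-primitive is again skew-primitive and I may assume $x$ is homogeneous of some degree $\gamma=\sum_i\gamma_i\a_i$. Second, if $\Delta(x)=x\ot a+b\ot x$ then $\Delta(a^{-1}x)=a^{-1}x\ot 1+(a^{-1}b)\ot a^{-1}x$, so $a^{-1}x$ is an ordinary $(1,g)$-primitive; since conjugation by $G$ merely rescales homogeneous elements, $\mathfrak{a}_1$ is a left $G$-submodule, and it is enough to classify homogeneous $(1,g)$-primitives. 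Comparing $G$-weights forces $g=\om_\gamma:=\prod_i\om_i^{\gamma_i}$, and applying the anti-automorphism $\tau$ of Definition \ref{d24} I may further assume $\gamma\in Q^+$.

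The core, and the main obstacle, is the classification: a homogeneous $(1,\om_\gamma)$-primitive equals a scalar multiple of $E_i$ when $\gamma=\a_i$, a scalar multiple of $1-\om_\gamma$ when $\gamma=0$, and $0$ in every other degree. I would expand $x$ in the PBW--Lyndon basis supplied by the triangular decomposition together with Theorem \ref{BaseE} and Corollary \ref{BaseF}, compute the reduced coproduct $\bar\Delta(x):=\Delta(x)-x\ot 1-\om_\gamma\ot x$ from the coproducts of the root vectors, and force all coefficients other than those of $E_i$, $F_i$, and group elements to vanish by comparing tensor components. The guiding fact is that every non-simple root vector has a nonzero reduced coproduct---already visible in the formula $\Delta(E_{jj+1})=E_{jj+1}\ot 1+\om_{jj+1}\ot E_{jj+1}+(1{-}a_{jj+1}a_{j+1j})E_j\om_{j+1}\ot E_{j+1}$ obtained in the proof of the preceding proposition---and the computation shows that the extremal tensor components of $\bar\Delta(x)$ cannot cancel unless the coefficients of all higher PBW monomials vanish.

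The genuinely delicate case, and where the hypotheses enter decisively, is a possible divided-power primitive $E_i^m$. Since $\Delta(E_i^n)=\sum_{j=0}^n\left[n\atop j\right]_{r_is_i^{-1}}E_i^{n-j}\om_i^j\ot E_i^j$, such a power is primitive exactly when $n$ is the multiplicative order of $r_is_i^{-1}$. Because $rs^{-1}$ is a primitive $\ell$th root of unity and $\ell$ is odd, each $r_is_i^{-1}\in\{rs^{-1},(rs^{-1})^2\}$ again has order exactly $\ell$, so the least such $n$ is $\ell$; but $E_i^\ell=0$ in $\mathfrak{u}$, and no nonzero divided-power primitive survives. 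This is precisely the point where a smaller order of $rs^{-1}$ would manufacture extra skew-primitives and destroy the lemma. The remaining labor is the $F_4$-specific bookkeeping of all $24$ positive root vectors in the reduced-coproduct computation, for which the commutation relations of Section 3 and the coproduct formulas of the preceding proposition are exactly the needed input.
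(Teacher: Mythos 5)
The paper offers no argument for this lemma at all --- it is stated bare after the general remark (Taft--Wilson, [{\bf M}, Thm.~5.4.1]) that $\mathfrak{u}_1$ is spanned by $G$ and the skew-primitives --- so there is no proof of record to compare yours against; I can only judge your outline on its own terms. Your two reductions are sound and standard: the root-lattice grading does cut a skew-primitive into homogeneous skew-primitive pieces, left translation by $G^{-1}$ does reduce to $(1,g)$-primitives while preserving membership in $\mathfrak{a}_1$, and your treatment of the divided-power case $E_i^m$ is correct and correctly locates where the hypothesis that $rs^{-1}$ has order exactly $\ell$ (together with $\ell$ odd, so that $r^2s^{-2}$ also has order $\ell$) is indispensable. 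One small slip: in degree $\gamma=0$ the weight comparison does \emph{not} force $g=\omega_0=1$ (indeed $1-\sigma$ is $(1,\sigma)$-primitive for every $\sigma\in G$), so the degree-zero case must be handled for arbitrary group-like $g$, and there one must also rule out contributions from the many degree-zero non-group-like PBW monomials such as $E_{\b_k}F_{\b_k}$.

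The genuine gap is that the core classification --- that in every degree $\gamma\notin\{0,\pm\a_1,\dots,\pm\a_4\}$ there is no nonzero skew-primitive, and that in degree $\pm\a_i$ the skew-primitives are exactly $\mathbb{K}E_i$ (resp.\ $\mathbb{K}F_i\omega_i'^{-1}$) modulo $\mathbb{K}G$ --- is asserted, not proved. The sentence ``the computation shows that the extremal tensor components of $\bar\Delta(x)$ cannot cancel unless the coefficients of all higher PBW monomials vanish'' \emph{is} the lemma; it is precisely the statement that the positive and negative parts are coradically graded (a Nichols-algebra-type property), and nothing you write establishes it. The observation that each individual non-simple root vector has nonzero reduced coproduct is necessary but nowhere near sufficient: a fixed degree $\gamma$ is spanned by many PBW monomials (including mixed ones such as $E^{(c)}\omega^{(b)}\omega'^{(b')}F^{(d)}$ with $\sum c_i\b_i-\sum d_j\b_j=\gamma$, which occur already for $\gamma=\a_1$ via $E_{12}F_2$), and one must exclude cancellation among all of their reduced coproducts simultaneously. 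For $F_4$, with $24$ positive roots and $\ell^{56}$-dimensional $\mathfrak{u}$, this is not a finite bit of bookkeeping one can wave at; it needs either a genuine inductive mechanism (e.g.\ leading-term analysis of $\Delta$ on PBW monomials with respect to a suitable ordering, first within $\mathfrak{b}$ and $\mathfrak{b}'$ separately and then gluing via the triangular decomposition) or a structural input such as the non-degeneracy of the skew-pairing of Proposition \ref{P22}. As it stands the proposal is a plausible plan with its decisive step missing.
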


 Given two group-like elements $g$ and $h$ in a
Hopf algebra $H$, let $P_{g,h}(H)$ denote the set of skew-primitive
elements of $H$ given by
$$
P_{g,h}(H)=\{\,x\in H\mid \Delta(x)=x\otimes g+h\otimes x\,\}.
$$

\begin{lemm}\label{l53}
Assume that $rs^{-1}$ is a primitive $\ell$th root of
unity. Then
\smallskip

$({\textrm{\rm i}})$ \;\
$P_{1,\omega_i}(\mathfrak{u}_{r,s})=\mathbb{K}(1-\omega_i)+\mathbb{K}E_i;$
$\quad
P_{1,\omega_i'^{-1}}(\mathfrak{u}_{r,s})=\mathbb{K}(1-\omega_i'^{-1})+\mathbb{K}F_i\omega_i'^{-1};$

$\qquad P_{1,\sigma}(\mathfrak{u}_{r,s})=\mathbb{K}(1-\sigma), \quad
\textrm{for}\
 \sigma\not\in\{\,\omega_i, \, \omega_i'^{-1}\mid 1\le i\le 4\,\}.$

\smallskip
$({\rm ii})$\;
$P_{\omega'_i,1}(\mathfrak{u}_{r,s})=\mathbb{K}(1-\omega'_i)+\mathbb{K}F_i;$
$\quad
P_{\omega_i^{-1},1}(\mathfrak{u}_{r,s})=\mathbb{K}(1-\omega_i^{-1})+\mathbb{K}E_i\omega_i^{-1};$

$\qquad P_{\sigma,1}(\mathfrak{u}_{r,s})=\mathbb{K}(1-\sigma), \quad
\textrm{for}\
 \sigma\not\in\{\,\omega_i^{-1}, \omega'_i\mid 1\le i\le 4\,\}.$
\end{lemm}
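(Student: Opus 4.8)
The plan is to reduce the whole statement to the explicit description of $\mathfrak{u}_1$ furnished by Lemma \ref{l52}, and then to read off the skew-primitives by comparing coefficients in the PBW basis. First I would note that every skew-primitive element automatically lies in $\mathfrak{u}_1$: if $x\in P_{g_0,h_0}(\mathfrak{u})$ then $\Delta(x)=x\otimes g_0+h_0\otimes x$ with $g_0,h_0\in G\subseteq\mathfrak{u}_0$, so $\Delta(x)\in\mathfrak{u}\otimes\mathfrak{u}_0+\mathfrak{u}_0\otimes\mathfrak{u}$ and hence $x\in\Delta^{-1}(\mathfrak{u}\otimes\mathfrak{u}_0+\mathfrak{u}_0\otimes\mathfrak{u})=\mathfrak{u}_1$ (this is also part of the Taft--Wilson description of $\mathfrak{u}_1$ recalled before Lemma \ref{l52}). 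By Lemma \ref{l52} I may therefore write any $x\in P_{g_0,h_0}$ as
$$x=\sum_{g\in G}a_g\,g+\sum_{i=1}^4\sum_{g\in G}b_{i,g}\,E_ig+\sum_{i=1}^4\sum_{g\in G}c_{i,g}\,F_ig,\qquad a_g,b_{i,g},c_{i,g}\in\mathbb{K}.$$

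Next I would apply $\Delta$ termwise, using $\Delta(g)=g\otimes g$, $\Delta(E_ig)=E_ig\otimes g+\omega_ig\otimes E_ig$ and $\Delta(F_ig)=g\otimes F_ig+F_ig\otimes\omega_i'g$, and equate the result with $x\otimes g_0+h_0\otimes x$. Both sides are supported on the five families of PBW tensors $g'\otimes g''$, $E_ig'\otimes g''$, $g'\otimes E_ig''$, $F_ig'\otimes g''$, $g'\otimes F_ig''$, which are linearly independent in $\mathfrak{u}\otimes\mathfrak{u}$ by Theorem \ref{BaseE} and Corollary \ref{BaseF} together with the group algebra $\mathbb{K}G$ in the middle. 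Matching coefficients in each family forces $b_{i,g}=0$ unless simultaneously $g=g_0$ and $\omega_ig_0=h_0$, forces $c_{i,g}=0$ unless simultaneously $g=h_0$ and $\omega_i'h_0=g_0$, and forces the group-like part to satisfy $a_g=0$ for $g\notin\{g_0,h_0\}$ and $a_{h_0}=-a_{g_0}$. Hence
$$x\in\mathbb{K}(g_0-h_0)+\sum_{\,i:\ \omega_ig_0=h_0}\mathbb{K}E_ig_0+\sum_{\,i:\ \omega_i'h_0=g_0}\mathbb{K}F_ih_0.$$

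To finish I would invoke that $G$ is free abelian of rank $8$ on the order-$\ell$ generators $\omega_1,\dots,\omega_4,\omega_1',\dots,\omega_4'$, which follows from the factorisation of the basis of $\mathfrak{u}$ into an $E$-part, the group part $\mathbb{K}G$, and an $F$-part, giving $|G|=\ell^{8}$ inside $\dim\mathfrak{u}=\ell^{56}$. Multiplicative independence of the eight generators then shows that $\omega_ig_0=h_0$ (resp.\ $\omega_i'h_0=g_0$) is solvable, and uniquely so, precisely in the listed cases. Specialising $(g_0,h_0)$ to $(1,\omega_j)$, $(1,\omega_j'^{-1})$ and $(1,\sigma)$ gives part (i), and to $(\omega_j',1)$, $(\omega_j^{-1},1)$ and $(\sigma,1)$ gives part (ii); the reverse inclusions are immediate from the coproduct formulas above (e.g.\ $E_j\in P_{1,\omega_j}$ and $F_j\omega_j'^{-1}\in P_{1,\omega_j'^{-1}}$).

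The hard part will not be the coefficient comparison, which is routine once the PBW basis is in place, but rather the two structural inputs that make the ``only if'' direction valid: the linear independence of the five tensor families in $\mathfrak{u}\otimes\mathfrak{u}$, and the multiplicative independence $G\cong(\mathbb{Z}/\ell)^8$ of the group-like generators, which is exactly what prevents any spurious $E_ig_0$ or $F_ih_0$ from appearing. The hypothesis that $rs^{-1}$ be a primitive $\ell$th root of unity enters only through Lemma \ref{l52}, ensuring that $\mathfrak{u}_1$ contains no further skew-primitives beyond the $E_i$, $F_i$ and $G$.
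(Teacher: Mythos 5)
Your proposal is correct and follows essentially the same route as the paper: both reduce to the description of $\mathfrak{u}_1$ from Lemma \ref{l52}, expand a skew-primitive $x$ over the spanning set $\mathbb{K}G+\sum_i(\mathbb{K}E_iG+\mathbb{K}F_iG)$, apply $\Delta$ termwise, and compare coefficients against $x\otimes g_0+h_0\otimes x$ to kill all but the listed terms. The only cosmetic difference is that you treat general $(g_0,h_0)$ and then specialise, and you make explicit the two structural inputs (linear independence of the tensor families and $G\cong(\mathbb{Z}/\ell)^8$) that the paper uses implicitly.
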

\begin{proof}[{\bf Proof.}]
Since $rs^{-1}$ is $\ell$-th root of unity, we get
$$\mathbb{K}G+\sum_{g,h\in G}P_{g,h}(\mathfrak{u})=\mathbb{K}G+\sum_{i=1}^4(\mathbb{K}E_iG+\mathbb{K}F_iG)$$
by Lemma \ref{l52}, that is, each $x\in P_{1,\sigma}(\mathfrak{u})$
is of the form
$$x=\sum_{g\in G}\gamma_gg+\sum_{g\in G}\sum_{i=1}^4\alpha_{i,g}E_ig+\sum_{g\in G}\sum_{i=1}^4\beta_{i,g}F_ig,\eqno{(*)}$$
where $\alpha_{i,g}, \beta_{i,g}, \gamma_g\in\mathbb{K}$. On the one
hand, applying the co-multiplication to the right hand of equality $(*)$, we get
that $\Delta(x)$ equals to
$$\sum_{g\in G}\gamma_gg\otimes g
+\sum_{g\in G}\sum_{i=1}^4\alpha_{i,g}(E_ig\otimes
g+\omega_ig\otimes E_ig)+\sum_{g\in
G}\sum_{i=1}^4\beta_{i,g}(g\otimes F_ig+F_ig\otimes\omega_i^\prime
g).$$ On the other hand, since $\Delta(x)=x\otimes1+\sigma\otimes
x$, $\Delta(x)$ equals to
$$\sum_{g\in G}\gamma_g(g\otimes1+\sigma\otimes g)+\sum_{g\in G}\sum_{i=1}^4\alpha_{i,g}(E_ig\otimes1+\sigma\otimes E_ig)
+\sum_{g\in G}\sum_{i=1}^4\beta_{i,g}(F_ig\otimes1+\sigma\otimes
F_ig).$$ By comparison, we get $\beta_{i,g}=0$ $
(g\neq\omega_i'^{-1})$, $\alpha_{i,g}=0$ $ (g\neq1)$,
$\gamma_\sigma=-\gamma_1$, $\gamma_g=0$ $ (g\neq1,\sigma)$.
Comparing $\Delta(x)$ with
$\Delta(\gamma_1(1-\sigma)+\sum\limits_{i=1}^4\alpha_{i,1}E_i+\sum\limits_{i=1}^4\beta_{1,\omega_i'^{-1}}F_i\omega_i'^{-1})$,
we get: if $\sigma{\notin}\{\omega_i\mid 1\leq i\leq4\}$, then
$\alpha_{i,1}=0$; if $\sigma{\notin}\{\omega_i'^{-1}\mid 1\leq
i\leq4\}$, then $\beta_{i,\omega_i'^{-1}}=0$.

If $\sigma=\omega_i$ for some $i$, then $\alpha_{j,1}=0$ $ (j\neq
i)$, $\beta_{j,\omega_j'^{-1}}=0$, hence
$x=\gamma_1(1{-}
\omega_i)+\alpha_{i,1}E_i;$ If
$\sigma=\omega_i'^{-1}$ for some $i$, then
$\beta_{j,\omega_j'^{-1}}=0$ $ (j\neq i)$, $\alpha_{j,1}=0$, hence
$x=\gamma_1(1-\omega_i'^{-1})+\beta_{i,\omega_i'^{-1}}F_i\omega_i'^{-1}.$
Therefore
\begin{equation*}
\begin{split}
P_{1,\omega_i}(\mathfrak{u})&=\mathbb{K}(1-\omega_i)+\mathbb{K}E_i,
\quad 1\leq
i\leq4,\\
P_{1,\omega_i'^{-1}}(\mathfrak{u})&=\mathbb{K}(1-\omega_i'^{-1})+\mathbb{K}F_i\omega_i'^{-1},\quad
1\leq
i\leq4,\\
P_{1,\sigma}(\mathfrak{u})&=\mathbb{K}(1-\sigma),\quad
\sigma{\notin}\{\,\omega_i,\omega_i'^{-1}\mid 1\leq i\leq4\,\}.
\end{split}
\end{equation*}

This completes the proof of  (i).

(ii) can be proved by the same argument.
\end{proof}

Now we prove the following  theorem on isomorphisms:

\begin{theorem}\label{t54}
Assume that $rs^{-1}$ and $r's'^{-1}$ are $\ell$th primitive roots
of unity and $\ell\neq3, 4$, $\zeta$ is the $2$-nd root of unity,
then there exists an isomorphism of Hopf algebras
$\varphi:\mathfrak{u}_{r,s}\rightarrow\mathfrak{u}_{r',s'}$ if and
only if either

$(1)$ $(r',s')=\zeta(r,s):$

 $\varphi(\omega_i)=\tilde{\omega_i}$, $\varphi(\omega_i')=\widetilde{\omega_i}'$, $\varphi(E_i)=a_i\widetilde{E}_i$,
$\varphi(F_i)=\zeta^{\delta_{i,3}+\delta_{i,4}}a_i^{-1}\widetilde{F_i};$ or

$(2)$ $(r',s')=\zeta(s,r):$

$\varphi(\omega_i)=\widetilde{\omega_i}'^{-1}$, $\varphi(\omega_i')=\widetilde{\omega_i}^{-1}$,
$\varphi(E_i)=a_i\widetilde{F_i}\widetilde{\omega_i}'^{-1}$,
$\varphi(F_i)=\zeta^{\delta_{i,3}+\delta_{i,4}}a_i^{-1}\widetilde{\omega_i}^{-1}\widetilde{E_i}$, where $a_i\in K^*$.
\end{theorem}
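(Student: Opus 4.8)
The plan is to prove the two implications separately: sufficiency by a direct check that each prescription respects all structure, and necessity by running the pointed-Hopf-algebra machinery of Lemmas \ref{l52} and \ref{l53}. For sufficiency I would take the assignments in (1) and (2) and verify that the images of the generators satisfy the defining relations $(F1)$--$(F6)$ of $\mathfrak{u}_{r',s'}$ together with the quotient relations, and that they intertwine $\Delta,\varepsilon,S$. In case (1) the only nonroutine point is $(F4)$: since $r_i-s_i=r^2-s^2$ for the long roots $\alpha_1,\alpha_2$ (invariant under $(r,s)\mapsto\zeta(r,s)$ because $\zeta^2=1$) while $r_i-s_i=r-s$ for the short roots $\alpha_3,\alpha_4$ (which rescales by $\zeta$), the factor $\zeta^{\delta_{i,3}+\delta_{i,4}}$ on $\varphi(F_i)$ is exactly what forces $[\varphi(E_i),\varphi(F_i)]=\varphi\bigl(\delta_{ij}\tfrac{\omega_i-\omega_i'}{r_i-s_i}\bigr)$ to hold. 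Case (2) I would obtain by composing the anti-automorphism $\tau$ of Definition \ref{d24} (interchanging $E_i\leftrightarrow F_i$, $\omega_i\leftrightarrow\omega_i'$ and $r\leftrightarrow s$) with the rescaling of (1); the same long/short dichotomy in $(F4)$ reproduces the sign factor. Bijectivity is automatic, as the inverse map is again of one of the two listed forms.

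For necessity, let $\varphi\colon\mathfrak{u}_{r,s}\to\mathfrak{u}_{r',s'}$ be a Hopf isomorphism. By Proposition \ref{p51} both algebras are pointed, so $\varphi$ preserves the coradical filtration; hence it restricts to a group isomorphism $G\to\widetilde G$ on the group-likes and sends skew-primitives to skew-primitives. Since $E_i\in P_{1,\omega_i}(\mathfrak{u}_{r,s})$, applying $\varphi$ gives $\varphi(E_i)\in P_{1,\varphi(\omega_i)}(\mathfrak{u}_{r',s'})$. By Lemma \ref{l53}, the space $P_{1,\sigma}$ contains a non-group-like skew-primitive only for $\sigma\in\{\widetilde\omega_j,\widetilde\omega_j'^{-1}\}$, so $\varphi(\omega_i)$ must lie in this set, and correspondingly $\varphi(E_i)$ is a scalar multiple of $\widetilde E_j$ (resp. of $\widetilde F_j\widetilde\omega_j'^{-1}$) plus a term in $\mathbb{K}(1-\varphi(\omega_i))$. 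I would then kill this group-like summand by transporting the eigenrelations $(F2)$--$(F3)$ through $\varphi$: comparing the $\varphi(\omega_k)$-weight of $\varphi(E_i)$ with the weight prescribed by the structural matrix forces the coefficient of $1-\varphi(\omega_i)$ to vanish. The identical analysis applied to $F_i\in P_{\omega_i',1}$ controls $\varphi(\omega_i')$ and $\varphi(F_i)$.

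It then remains to identify the two global patterns and the parameters. I would show that the choice $\varphi(\omega_i)\in\{\widetilde\omega_j\}$ (case 1) versus $\varphi(\omega_i)\in\{\widetilde\omega_j'^{-1}\}$ (case 2) cannot be mixed across $i$, since a mixed assignment is incompatible with the skew-dual pairing $\langle\,,\rangle$ of Proposition \ref{P22} and with the consistent sign pattern of $(F4)$; and that the underlying index matching $i\mapsto j$ must be the identity because the Dynkin diagram of $F_4$ admits no nontrivial automorphism respecting the long/short partition encoded in $(a_{ij})$. Finally, pushing $(F2),(F3)$ through $\varphi$ forces the structural constants of the two algebras to coincide on their symmetrized data $a_{ii}$ and $a_{ij}a_{ji}$; because these quantities are invariant under a simultaneous sign change of $(r,s)$, they determine $(r',s')$ only up to a common sign, giving $(r',s')=\zeta(r,s)$ in case (1) and $(r',s')=\zeta(s,r)$ in case (2) with $\zeta^2=1$, while the scalars $a_i\in\mathbb{K}^*$ stay free and $(F4)$ reinstalls $\zeta^{\delta_{i,3}+\delta_{i,4}}$.

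The main obstacle will be precisely this last bookkeeping at the short roots $\alpha_3,\alpha_4$: reconciling the sign $\zeta$ against the short-root entries $a_{34}=s$, $a_{43}=r^{-1}$ while keeping the long-root (even-degree) entries matched, and simultaneously excluding the spurious mixed and permuted assignments. Here the hypotheses $r^2\neq s^2$, $r^3\neq s^3$ (guaranteeing the structural constants are generic enough to separate the weight spaces) and $\ell\neq 3,4$ (so that $rs^{-1}$ is a primitive $\ell$th root of unity far from the degenerate orders where extra skew-primitives or coincidences among the $a_{ij}$ could appear) are exactly what make the rigidity argument go through.
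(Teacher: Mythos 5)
Your proposal follows essentially the same route as the paper's proof: pointedness and Lemmas \ref{l52}--\ref{l53} pin down $\varphi$ on group-likes and force $\varphi(E_i)$ into $\mathbb{K}(1-\varphi(\omega_i))+\mathbb{K}\widetilde{E}_j$ or $\mathbb{K}(1-\varphi(\omega_i))+\mathbb{K}\widetilde{F}_j\widetilde{\omega}_j'^{-1}$, transporting $(F2)$--$(F3)$ kills the group-like summand and rigidifies the index matching, and $(F4)$ produces the factor $\zeta^{\delta_{i,3}+\delta_{i,4}}$ at the short roots. The only cosmetic difference is that the paper excludes the mixed and permuted assignments concretely --- by deriving $(rs^{-1})^m=1$ for $m\in\{2,3,4\}$ (whence the hypothesis $\ell\neq 3,4$) and by pulling back the vanishing cross-commutator of $\widetilde{E}_j$ with $\widetilde{F}_k\widetilde{\omega}_k'^{-1}$ to contradict $[E_i,E_t]\neq 0$ for adjacent $i,t$ --- rather than by appealing to the skew-dual pairing, but this is a matter of which relations one chooses to transport.
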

\begin{proof}[{\bf Proof.}]
Assume that
$\varphi:\mathfrak{u}_{r,s}\rightarrow\mathfrak{u}_{r',s'}$ is a
Hopf algebra isomorphism  and $\widetilde{E}_i, \widetilde{F}_i,
\widetilde{\omega}_i,  \widetilde{\omega}_i'$ are generators of
$\mathfrak{u}_{r',s'}$. Since $\varphi$ is a morphism of coalgebras,
$$\Delta(\varphi(E_i))=(\varphi\otimes\varphi)\Delta(E_i)=\varphi(E_i)\otimes1+\varphi(\omega_i)\otimes\varphi(E_i),$$
and $\varphi(\omega_i)\in\mathbb{K}\widetilde{G}$, i.e.,
$\varphi(E_i)\in P_{1,\varphi(\omega_i)}(\mathfrak{u}_{r',s'})$.
Using Lemma \ref{l53} (i), we get
$\varphi(\omega_i)\in\{\widetilde{\omega}_j,\widetilde{\omega}_j'^{-1}\mid1\leq
j\leq4\}$. More precisely, either
$\varphi(\omega_i)=\widetilde{\omega}_j$ and
$\varphi(E_i)=a(1-\widetilde{\omega}_j)+b\widetilde{E}_j$, or
$\varphi(\omega_i)=\widetilde{\omega}_j'^{-1}$ and
$\varphi(E_i)=a(1-\widetilde{\omega}_j'^{-1})+b\widetilde{F}_j\widetilde{\omega}_j'^{-1}$
for some $j$ and  $a, b\in\mathbb{K}$. We have $a=0$ in all cases by
applying $\varphi$ to relations $\omega_iE_i=r_is_i^{-1}E_i\omega_i$
and assumption $r_i\neq s_i$. Thus $b\in K^*$.

We claim that the two cases cannot hold simultaneously. Indeed,
suppose $\varphi(\omega_i)=\widetilde{\omega}_j$, $
\varphi(\omega_{t})=\widetilde{\omega}_k'^{-1}$ with $t\neq i$, then
$\varphi(E_i)=a_j\widetilde{E}_j$,
$\varphi(E_{t})=a_k\widetilde{F}_k\widetilde{\omega}_k'^{-1}$ by the
above discussion.  Applying $\varphi$ to relations
$\omega_iE_i\omega_i^{-1}=\langle\omega_i',\omega_i\rangle E_i$,
$\omega_{t}E_{t}\omega_{t}^{-1}=\langle\omega_{t}',\omega_{t}\rangle
E_{t}$,  we get: $r_is_i^{-1}=r_j's_j'^{-1}$,
$r_{t}s_{t}^{-1}=r_k'^{-1}s_k'$. If $j=k$, we have one of three
cases:  $(rs^{-1})^2=1$,  $(rs^{-1})^3=1$,
 $(rs^{-1})^4=1$, each of which contradicts to the assumption.
 If $j\neq k$, we have
$(\widetilde{F}_k\widetilde{\omega}_k'^{-1})\widetilde{E}_j
=\langle\widetilde{\omega}_k',\widetilde{\omega}_j\rangle\widetilde{E}_j(\widetilde{F}_k\widetilde{\omega}_k'^{-1})$,
applying $\varphi^{-1}$, we get
$E_{t}E_i=\langle\widetilde{\omega}_k',\widetilde{\omega}_j\rangle
E_{i}E_{t}$, which contradicts to  $[E_i,E_{t}]\neq0$ with
$|i-t|=1$. We can use $\omega_{i+1}$ or $\omega_{i+2}$ substitute
$\omega_{i}$ when $|i-t|\geqslant 2$ and get the same result. So for
each $1\leq i\leq4$, only one of two cases holds: (i)
$\varphi(\omega_i)\in\{\widetilde{\omega}_j\mid1\leq j\leq4\}$, (ii)
$\varphi(\omega_i)\in\{\widetilde{\omega}_j'^{-1}\mid1\leq
j\leq4\}$.

(i) We claim that $\varphi(\omega_i)=\widetilde{\omega}_i$ $ (1\leq
i\leq4)$. Assume that $\varphi(\omega_2)=\widetilde{\omega}_j$,
$\varphi(\omega_3)=\widetilde{\omega}_k$, then
$\varphi(E_2)=a_j\widetilde{E}_j$, $\varphi(E_3)=a_k\widetilde{E}_k$.
If $|j-k|>1$, then
$\widetilde{E}_j\widetilde{E}_k-\widetilde{E}_k\widetilde{E}_j=0$,
but
$\varphi^{-1}(\widetilde{E}_j\widetilde{E}_k-\widetilde{E}_k\widetilde{E}_j)=(a_ja_k)^{-1}(E_2E_3-E_3E_2)\neq0$,
this is a contradiction, so $|j-k|=1$.
If $j$ and $k$ are connected
by one edge in Dynkin diagram,  we have $\varphi^{-1}((\mbox{ad}_l\widetilde{E}_{j+1})^{1-a_{j+1,j}}\widetilde{E}_j)\neq
0$ with $j<k$,  $\varphi^{-1}((\mbox{ad}_l\widetilde{E}_j)^{1-a_{j,j+1}}\widetilde{E}_{j+1})\neq 0$ with  $j>k$, which is a contradiction.
 So we have $(j,k)=(3,2)$ or $(2,3)$.  If
$\varphi(\omega_2)=\widetilde{\omega}_3$,
$\varphi(\omega_3)=\widetilde{\omega}_2$, applying $\varphi$ to
relations $\omega_2E_2=r^2s^{-2}E_2\omega_2$,
$\omega_3E_3=rs^{-1}E_3\omega_3$, we get $r's'^{-1}=r^2s^{-2}$,
$r'^2s'^{-2}=rs^{-1}$, so $(rs^{-1})^3=1$, which contradicts to the
assumption. So we get $\varphi(\omega_2)=\widetilde{\omega}_2$,
$\varphi(\omega_3)=\widetilde{\omega}_3$.  Assume that
$\varphi(\omega_1)=\widetilde{\omega}_{j_1}$, then $j_1$ and $2$ are
connected in Dynkin diagram, but $\varphi$ is a bijection, so
$\varphi(\omega_1)=\widetilde{\omega}_1$,
$\varphi(\omega_4)=\widetilde{\omega}_4$.

The same argument shows
that for $1\leq i\leq4$, $\varphi(\omega_i')=\widetilde{\omega}_i'$,
$\varphi(F_i)=b_i\widetilde{F}_i$. Applying $\varphi$ to relations
$\omega_iE_j=\langle\omega_j'$, $\omega_i\rangle E_j\omega_i$, $
i,j\in\{2,3\}$, we get $\langle\widetilde{\omega}_j'$,
$\widetilde{\omega}_i\rangle=\langle\omega_j'$, $\omega_i\rangle$,
or $r'^2s'^{-2}=r^2s^{-2}$, $r's'^{-1}=rs^{-1}$, $r'^{-2}=r^{-2}$,
$s'^2=s^2$, so $(r',s')=\zeta(r,s)$, applying $\varphi$ to relations
$[E_i,F_i]=\frac{\omega_i-\omega_i^\prime}{r_i-s_i}$, we get
$\varphi(F_i)=\zeta^{\delta_{i,3}+\delta_{i,4}}a_i^{-1}\widetilde{F}_i
$ $(1\leq i\leq4)$. It is clear that $\varphi$ preserves all
$(r,s)$-Serre relations, so $\varphi$ is an isomorphism of Hopf
algebras.

(ii) The same argument shows that
$\varphi(\omega_i)=\widetilde{\omega_i}'^{-1}$ $ (1\leq i\leq4)$ and
$\varphi(\omega_i')=\widetilde{\omega_i}^{-1}$,
$\varphi(E_i)=a_i\widetilde{F}_i\widetilde{\omega_i}'^{-1}$,
$\varphi(F_i)=b_i\widetilde{\omega_i}^{-1}\widetilde{E}_i$. Applying
$\varphi$ to relations $\omega_iE_j=\langle\omega_j',
\omega_i\rangle E_j\omega_i$, $i,\, j\in\{2,3\}$, we get
$\langle\widetilde{\omega_i}',
\widetilde{\omega}_j\rangle^{-1}=\langle\omega_j', \omega_i\rangle$,
or $r'^{-2}s'^2=r^2s^{-2}$, $r'^{-1}s'=rs^{-1}$, $r'^2=s^2$,
$s'^{-2}=r^{-2}$, so $(r',s')=\zeta(s,r)$, applying $\varphi$ to
relations $[E_i,F_i]=\frac{\omega_i-\omega_i^\prime}{r_i-s_i}$, we
get
$\varphi(F_i)=\zeta^{\delta_{i,3}+\delta_{i,4}}a_i^{-1}\widetilde{\omega_i}^{-1}\widetilde{E}_i$.

It is clear that $\varphi$ preserves all $(r,s)$-Serre relations, so
$\varphi$ is an isomorphism of Hopf algebras.
\end{proof}

\begin{remark} The above isomorphism Theorem 5.4 is important for our seeking for the possible new finite-dimensional pointed (graded) Hopf algebras of type $F_4$
especially when the order $\ell$ of root of unity is low, for instance, $\ell=5, 7$ that don't satisfy the assuming condition:
$(\ell, 210)=1$ as in \cite{AS2}, as has been witnessed by Benkart-Pereira-Witherspoon in their Example 5.6 \cite{BPW}, where
two-parameter quantum groups $\mathfrak u_{r,s}(\mathfrak{sl}_3)$'s with $\ell=4$ can be reduced to two non-isomorphic one-parameter ones
$\mathfrak u_{q,q^{-1}}(\mathfrak{sl}_3)$ and $\mathfrak u_{1,q}(\mathfrak{sl}_3)$ by terms of the Isomorphism Theorem 5.5 for type $A$ established by the last two authors in \cite{HW2}.
Based on the example observed by Benkart-Pereira-Witherspoon in type $A$, we dare to conjecture that there should exist the possible new finite-dimensional pointed Hopf algebras
of type $F_4$ under the assumption $\ell=5, 7$ that are simultaneously graded rather than filtered (via the lifting method proposed by Andruskiewitsch-Schneider in \cite{AS1}).
\end{remark}

\section{$\mathfrak{u}_{r,s}(F_4)$ is a Drinfel'd double}

 Let $\theta$ be a primitive $\ell$th root of
unity in $\mathbb{K}$, and write $r=\theta^y$, $s=\theta^z$.
\begin{lemm}\label{61}
Assume that $(4(y^4{+}z^4{-}y^2z^2),\ell)=1$ and $rs^{-1}$ is a primitive $\ell$th
 root of unity, then there exists a Hopf algebra isomorphism
$(\mathfrak{b}')^{coop}\cong\mathfrak{b}^*.$
\end{lemm}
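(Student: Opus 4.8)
The plan is to derive the isomorphism from the skew-dual pairing of Proposition \ref{P22}, after showing that this pairing descends to a \emph{non-degenerate} pairing between the restricted Hopf subalgebras. First I would check that $\langle\,,\rangle\colon \mathcal{B}'\times\mathcal{B}\to\mathbb{K}$ factors through the restricted quotients $\mathfrak{b}'$ and $\mathfrak{b}$. By Theorem \ref{t14} the elements $E_{\b_i}^\ell$, $F_{\b_i}^\ell$ and $\omega_k^\ell-1$, $(\omega_k')^\ell-1$ are central and generate the defining ideal, so it suffices to see that they lie in the respective radicals of the pairing. Concretely, using $\langle F_i,E_j\rangle=\delta_{ij}(s_i-r_i)^{-1}$ together with the Hopf-pairing axioms and the fact that $rs^{-1}$ is a primitive $\ell$th root of unity (so that the relevant $q$-binomial coefficients vanish, as in Step 1--2 of Proposition 4.8), one finds that pairing any $\ell$th power against the other algebra reproduces exactly the scalar forced on the quotient. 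This yields a well-defined skew-Hopf pairing $\langle\,,\rangle\colon \mathfrak{b}'\times\mathfrak{b}\to\mathbb{K}$.

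The heart of the argument is non-degeneracy. Using the convex PBW-type bases of Theorem \ref{BaseE} and Corollary \ref{BaseF}, I would compute the Gram matrix of the induced pairing on the monomial bases
$$F_{\b_1}^{d_1}\cdots F_{\b_{24}}^{d_{24}}\,g' \quad\text{and}\quad g\,E_{\b_{24}}^{c_{24}}\cdots E_{\b_1}^{c_1}, \qquad g,g'\in G,$$
and show it is triangular with respect to the PBW ordering, with nonzero diagonal entries. Each diagonal entry factors as a root-vector contribution $\prod_i \langle F_{\b_i}^{d_i}, E_{\b_i}^{d_i}\rangle$ times a Cartan contribution built from $\langle (\omega_i')^{\pm1}, \omega_j^{\pm1}\rangle = a_{ji}^{\pm1}$. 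The root-vector factors are, up to nonzero scalars, products of quantum factorials $[d_i]_{r_is_i^{-1}}!$; since $\ell$ is odd and $rs^{-1}$ is a primitive $\ell$th root, each $r_is_i^{-1}$ is again a primitive $\ell$th root, so these factorials are nonzero for $0\le d_i\le \ell-1$. The Cartan contribution is governed by the exponent matrix $M=(M_{ij})$ with $a_{ij}=\theta^{M_{ij}}$; a direct computation gives $\det M = 4(y^4+z^4-y^2z^2)$, so the hypothesis $(4(y^4+z^4-y^2z^2),\ell)=1$ guarantees that the induced pairing on the group part $G\times G$ is non-degenerate. Hence the full Gram matrix is invertible and the pairing is non-degenerate.

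Finally, a non-degenerate skew-Hopf pairing between finite-dimensional Hopf algebras yields the desired isomorphism. I would set $\Phi\colon (\mathfrak{b}')^{coop}\to\mathfrak{b}^*$, $\Phi(a)=\langle a,-\rangle$, and verify directly from the skew-pairing axioms that $\Phi$ is an algebra homomorphism (the reversal of comultiplication in the pairing is precisely what forces the co-opposite coalgebra structure on $\mathfrak{b}'$), a coalgebra homomorphism, and compatible with the antipode via $\langle S(a),S(b)\rangle=\langle a,b\rangle$ recorded in Proposition \ref{P22}. Non-degeneracy makes $\Phi$ injective, and since $\dim\mathfrak{b}'=\dim\mathfrak{b}=\dim\mathfrak{b}^*=\ell^{28}$, it is an isomorphism. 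The main obstacle is the non-degeneracy step: proving triangularity of the Gram matrix draws on the complete set of commutation relations from Section 3, and isolating the Cartan determinant $4(y^4+z^4-y^2z^2)$ is exactly what pins down the arithmetic hypothesis of the lemma.
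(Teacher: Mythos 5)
Your route is genuinely different from the paper's. The paper does not pass through non-degeneracy of the Rosso-type pairing at all: it writes down the map explicitly on generators, sending $\omega_j'$ to the character $\gamma_j$ with $\gamma_j(\omega_i)=\langle\omega_j',\omega_i\rangle$ and $F_j$ to $\eta_j=\sum_{g\in G(\mathfrak b)}(E_jg)^*$, checks the Hopf algebra relations directly (borrowing the Serre-relation and comultiplication computations from the type $G_2$ paper), and then proves injectivity using pointedness: since $\mathfrak b'$ is pointed, a coalgebra map is injective as soon as it is injective on the first term $(\mathfrak b')_1^{coop}=\mathbb{K}G(\mathfrak b')+\sum_i\mathbb{K}F_iG(\mathfrak b')$ of the coradical filtration (via Lemma \ref{l53} and Montgomery's book). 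That reduces everything to the invertibility of the character matrix of $G$, i.e.\ to the solvability of $Ax=I$ over $\mathbb{Z}/\ell\mathbb{Z}$ with $\det A=4(y^4+z^4-y^2z^2)$ — which is exactly where your Cartan determinant reappears, so the arithmetic hypothesis enters both arguments in the same way. What the paper's approach buys is that it never has to evaluate the pairing on arbitrary PBW monomials; what your approach would buy, if completed, is a stronger statement (full non-degeneracy of the induced skew-pairing $\mathfrak b'\times\mathfrak b\to\mathbb{K}$), which is of independent interest.

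The caveat is that the load-bearing step of your argument — that the Gram matrix of PBW monomials is triangular and that the diagonal entry at a pair of equal exponent vectors factors as a nonzero multiple of $\prod_i[d_i]_{r_is_i^{-1}}!$ times a group character value — is asserted, not proved, and it is not a formal consequence of the commutation relations listed in Section 3. Establishing it amounts to a two-parameter $F_4$ analogue of the Levendorskii--Soibelman/De Concini--Kac orthogonality computations, including showing the "extra" scalars in $\langle F_{\b_i}^{d},E_{\b_i}^{d}\rangle$ for the twenty non-simple roots are nonzero; for a root system with $24$ positive roots this is a substantial computation that the paper deliberately sidesteps with the coradical-filtration argument. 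You should also be explicit that descent of the pairing to the quotients uses $r^\ell=s^\ell=1$ (so $\langle\omega_k',\omega_j\rangle^\ell=1$) and not merely that $rs^{-1}$ has order $\ell$. With those points supplied, your proof would be complete, but as written the non-degeneracy claim is a genuine gap rather than a routine verification.
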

\begin{proof}[{\bf Proof.}]
Define $\gamma_j\in \mathfrak{b}^*$ such that $\gamma_j$'s are
algebra homomorphisms with
$$
\gamma_{j}(E_i)=0, \, \textrm{ and } \ \gamma_{j}(\omega_i)=\langle \omega_j',\omega_i\rangle\quad 1\leqslant i, j\leqslant 4.
$$
So they are group-like elements in $\mathfrak{b}^*$. Define
$\eta_j=\sum\limits_{g \in G(\mathfrak b)}(E_jg)^*$, where
$G(\mathfrak{b})$ is  the group generated by $\omega_i\ (1\leq i\leq
4)$ and the asterisk denotes the dual basis element relative to the
PBW-basis of $\mathfrak{b}$. The isomorphism $\phi:
(\mathfrak{b}')^{coop}\rightarrow \mathfrak{b}^*$ is defined by
$$
\phi(\omega'_j)=\gamma_j, \qquad \phi(F_j)=\eta_j.
$$
First, we will check that $\phi$ is a Hopf algebra homomorphism and
then we will show that it is a bijection.

Clearly, the $\gamma_j$'s are invertible elements in
$\mathfrak{b}^*$ that commute with one another and
$\gamma_j^\ell=1$. Note that $\eta_j^\ell=0$, as it is $0$ on any
basis element of $\mathfrak{b}$. We calculate
$\gamma_j\eta_i\gamma_j^{-1}$: It is nonzero only on basis elements
of the form $E_i\omega_1^{k_1}\cdots\omega_4^{k_4}$, and on such an
element it takes the value
\begin{gather*}
\begin{split}
& (\gamma_j\otimes\eta_i\otimes \gamma_j^{-1})((E_i\otimes 1\otimes
1+ \omega_i\otimes E_i\otimes 1+\omega_i\otimes \omega_i\otimes
E_i)(\omega_1^{k_1}\cdots\omega_4^{k_4})^{\otimes 3}) \\&=
\gamma_j(\omega_i\omega_1^{k_1}\cdots\omega_4^{k_4})\eta_i(E_i\omega_1^{k_1}\cdots\omega_4^{k_4})
\gamma_j^{-1}(\omega_1^{k_1}\cdots\omega_4^{k_4})\\
&=\gamma_{j}(\omega_i).
\end{split}
\end{gather*}
So, we have $\gamma_j\eta_i\gamma_j^{-1}=\langle
\omega_j',\omega_i\rangle\,\eta_i$, which corresponds to relation
(F3) for $\mathfrak{b}'$.

Next, with the same argument in the proof of Lemma 5.1 in \cite{HW1} (see, pp. 263--264), we easily check that the same relations on $\eta_i$, $\eta_j$ as those
for $f_i$, $f_j$ in (F6). Hence, $\phi$ is an algebra homomorphism. 

We have already seen that $\gamma_i$ is a group-like element in $\mathfrak b^*$. Now using the same argument as in the proof of Lemma 5.1 in \cite{HW1} (see, pp. 264--265), we have
$\Delta(\eta_i)=\eta_i\otimes1+\gamma_i\otimes\eta_i$, which means that $\phi$ is a Hopf algebra homomorphism.

Finally, we show that $\phi$ is a bijection. As $\dim\mathfrak{b}^*=\dim(\mathfrak{b}')^{coop}$, it suffices to
prove that $\phi$ is injective. By \cite{M}, we need only to prove that
$\phi|_{(\mathfrak{b}')_1^{coop}}$ is injective. Using
Lemma \ref{l53}, we get
$(\mathfrak{b}')_1^{coop}=\mathbb{K}G(\mathfrak{b}')+\sum\limits_{i=1}^4\mathbb{K}F_iG(\mathfrak{b}')$,
where $G(\mathfrak{b}')$ is the group generated by $\omega_i'$ $(1\leq i\leq4)$. We claim that
$$\mathrm{span}_{\mathbb{K}}\{\gamma_1^{k_1}\cdots\gamma_4^{k_4}\mid 0\leq k_i<\ell\}=\mathrm{span}_{\mathbb{K}}\{(\omega_1^{k_1}\cdots\omega_4^{k_4})^* \mid 0\leq k_i<\ell\}.$$
This is equivalent to saying that the $\gamma_1^{k_1}\cdots\gamma_4^{k_4}$'s span the space of characters over $\mathbb K$ of the finite group
$\mathbb{Z}/\ell\mathbb{Z}\times\mathbb{Z}/\ell\mathbb{Z}\times\mathbb{Z}/\ell\mathbb{Z}\times\mathbb{Z}/\ell\mathbb{Z}$
generated by $\omega_1,\cdots,\omega_4$. Note that all irreducible characters are
linearly independent in $(\mathbb{K}G(\mathfrak{b}))^*$. Since
$\mathbb{K}$ contains $\theta$ a primitive $\ell$th root of unity, any irreducible
characters are the functions $\chi_{i_1,i_2,i_3,i_4}$ given by $\chi_{i_1,i_2,i_3,i_4}(\omega_1^{k_1}\omega_2^{k_2}\omega_3^{k_3}\omega_4^{k_4})=
\theta^{i_1k_1+i_2k_2+i_3k_3+i_4k_4}$. Note that $\gamma_1=\chi_{2(y-z),-2y,0,0},
\gamma_2=\chi_{2z,2(y-z),-2y,0}, \gamma_3=\chi_{0,2z,y-z,-y},
\gamma_4=\chi_{0,0,z,y-z}$. Now we have to show that given $i_1,
i_2, i_3, i_4$, there exist $x_1, x_2, x_3, x_4$, such that
$\chi_{i_1, i_2, i_3,
i_4}=\gamma_1^{x_1}\gamma_2^{x_2}\gamma_3^{x_3}\gamma_4^{x_4}$, which
is equivalent to the existence of a solution to the linear equation $Ax=I$ in
$\mathbb{Z}/\ell\mathbb{Z}$, where
$$A=
\left(
\begin{array}{cccc}
2(y-z)&2z&0&0\\
-2y&2(y-z)&2z&0\\
0&-2y&y-z&z\\
0&0&-y&y-z
\end{array}
\right),
$$
$x$ is the transpose of $(x_1, x_2, x_3, x_4)$, $I$ is the transpose of
$(i_1, i_2, i_3, i_4)$. Note that $\mbox{det} A=4(y^4+z^4-y^2z^2)$,
$A$ is invertible when $(4(y^4+z^4-y^2z^2),l)=1$, so the equation
has a unique solution which proves our claim. In particular, this implies that the matrix
$$((\gamma_1^{k_1}\gamma_2^{k_2}\gamma_3^{k_3}\gamma_4^{k_4})(\omega_1^{j_1}\omega_2^{j_2}\omega_3^{j_3}\omega_4^{j_4}))_{\overline{k},\overline{j}}$$
is invertible, 
and $\phi|_{\mathbb{K}G(\mathfrak{b}')}$ is bijective.

Now we shall show for each $i$ ($1\le i\le 4$) that the following matrix is invertible:
$$((\eta_i\gamma_1^{k_1}\gamma_2^{k_2}\gamma_3^{k_3}\gamma_4^{k_4})(E_i\omega_1^{j_1}\omega_2^{j_2}\omega_3^{j_3}\omega_4^{j_4}))_{\overline{k},\overline{j}}.$$
This will complete the proof that $\phi$ is injective on $(\mathfrak b')_1^{\text{coop}}$, as desired. In fact,
$$
\begin{array}{ll}
&(\eta_i\gamma_1^{k_1}\gamma_2^{k_2}\gamma_3^{k_3}\gamma_4^{k_4})(E_i\omega_1^{j_1}\omega_2^{j_2}\omega_3^{j_3}\omega_4^{j_4})\\
&\quad=(\eta_i\otimes\gamma_1^{k_1}\gamma_2^{k_2}\gamma_3^{k_3}\gamma_4^{k_4})(\Delta(E_i)\Delta(\omega_1^{j_1}\omega_2^{j_2}\omega_3^{j_3}\omega_4^{j_4}))\\
&\quad=(\eta_i\otimes\gamma_1^{k_1}\gamma_2^{k_2}\gamma_3^{k_3}\gamma_4^{k_4})(E_i\omega_1^{j_1}\omega_2^{j_2}\omega_3^{j_3}\omega_4^{j_4}\otimes\omega_1^{j_1}\omega_2^{j_2}\omega_3^{j_3}\omega_4^{j_4})\\
&\quad=(\gamma_1^{k_1}\gamma_2^{k_2}\gamma_3^{k_3}\gamma_4^{k_4})(\omega_1^{j_1}\omega_2^{j_2}\omega_3^{j_3}\omega_4^{j_4}),
\end{array}
$$
the matrix associated with $\phi|_{(\mathfrak{b}')_1^{coop}}$ is
block upper-triangular with the same block each and invertible. So
$\phi|_{(\mathfrak{b}')_1^{coop}}$ is injective.

This  completes the proof.
\end{proof}

With the same argument of Theorem 4.8 in
\cite{BW3}, we have

\begin{theorem}\label{62}
Assume that $(4(y^4{+}z^4{-}y^2z^2),\ell)=1$, then there exists a Hopf
algebra isomorphism $\mathcal
{D}(\mathfrak{b})\cong\mathfrak{u}_{r,s}(F_4).$\hfill\qed
\end{theorem}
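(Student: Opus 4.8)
The plan is to realize the Drinfel'd double $\mathcal{D}(\mathfrak{b})$ concretely and to exhibit a Hopf algebra isomorphism onto $\mathfrak{u}_{r,s}(F_4)$ which is the identity on the algebra generators, exactly as in Theorem 4.8 of \cite{BW3}. By Lemma \ref{61}, the hypothesis $(4(y^4{+}z^4{-}y^2z^2),\ell)=1$ (together with $rs^{-1}$ being a primitive $\ell$th root of unity) furnishes a Hopf algebra isomorphism $\phi\colon(\mathfrak{b}')^{coop}\xrightarrow{\;\sim\;}\mathfrak{b}^*$. Hence the standard double $\mathcal{D}(\mathfrak{b})=(\mathfrak{b}^*)^{cop}\bowtie\mathfrak{b}$ can be transported along $\phi$ into a matched-pair construction built directly from the two Borel halves $\mathfrak{b}'$ and $\mathfrak{b}$, with all cross relations governed by the skew-Hopf pairing $\langle\,,\rangle$ of Proposition \ref{P22}. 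First I would verify that this pairing descends to the restricted quotients: the defining values $\langle F_i,E_j\rangle=\delta_{ij}/(s_i-r_i)$ and $\langle\omega_i',\omega_j\rangle=a_{ji}$ show that the central generators $E_{\b_i}^\ell$, $F_{\b_i}^\ell$, $\omega_k^\ell-1$, $\omega_k'^\ell-1$ of $\mathcal{I}$ lie in the radical of $\langle\,,\rangle$ (here $\ell$ odd is used), so $\langle\,,\rangle$ is well defined on $\mathfrak{b}'\times\mathfrak{b}$, and its nondegeneracy is precisely the invertibility of the matrix $A$ established in Lemma \ref{61}.

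Next I would define $\Psi\colon\mathcal{D}(\mathfrak{b})\to\mathfrak{u}_{r,s}(F_4)$ on the two factors by $\Psi(E_i)=E_i$, $\Psi(\omega_i^{\pm1})=\omega_i^{\pm1}$, and (through $\phi^{-1}$) $\Psi(F_i)=F_i$, $\Psi(\omega_i'^{\pm1})=\omega_i'^{\pm1}$. The relations internal to each factor are preserved tautologically, because $\mathfrak{b}$ and $\mathfrak{b}'$ sit inside $\mathfrak{u}_{r,s}(F_4)$ as Hopf subalgebras (Proposition \ref{p51}). The only substantive point is that the cross relation of the double, which for $e\in\mathfrak{b}$ and $f\in\mathfrak{b}'$ takes the matched-pair form
\[
\sum\langle f_{(1)},e_{(1)}\rangle\,e_{(2)}\,f_{(2)}
=\sum\langle f_{(2)},e_{(2)}\rangle\,f_{(1)}\,e_{(1)},
\]
must collapse onto the defining cross relations $(F2)$--$(F4)$ modulo $\mathcal{I}$. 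Evaluating this identity on the generating pairs and inserting $\Delta(E_i)=E_i\otimes1+\omega_i\otimes E_i$, $\Delta(F_j)=1\otimes F_j+F_j\otimes\omega_j'$, the choices $e=\omega_i,f=F_j$ and $e=E_i,f=\omega_j'$ reproduce $(F2)$ and $(F3)$ via $\langle\omega_i',\omega_j\rangle=a_{ji}$, while $e=E_i,f=F_j$ yields, with the pairing convention normalized as in \cite{BW3}, the relation $(F4)$, namely $E_iF_j-F_jE_i=\delta_{ij}(\omega_i-\omega_i')/(r_i-s_i)$. This is line-for-line the computation of \cite{BW3}, only the entries $a_{ij}$ and the scalars $r_i,s_i$ changing to their type $F_4$ values, so it carries over.

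Granting that $\Psi$ is an algebra homomorphism, it is automatically a coalgebra homomorphism, since the coproducts of $E_i,\omega_i^{\pm1},F_i,\omega_i'^{\pm1}$ in $\mathcal{D}(\mathfrak{b})$ are literally those of $\mathfrak{u}_{r,s}(F_4)$; hence $\Psi$ is a bialgebra map and therefore a Hopf algebra map. It is surjective because its image contains all the generators of $\mathfrak{u}_{r,s}(F_4)$. To upgrade surjectivity to bijectivity I would compare dimensions: by the restricted PBW basis, $\dim\mathfrak{b}=\ell^{24}\cdot\ell^4=\ell^{28}$ (the $24$ root vectors $E_{\b_i}$ each of order $\ell$, times the group $G(\mathfrak{b})\cong(\mathbb{Z}/\ell\mathbb{Z})^4$), whence
\[
\dim\mathcal{D}(\mathfrak{b})=(\dim\mathfrak{b})^2=\ell^{56}=\dim\mathfrak{u}_{r,s}(F_4),
\]
the last equality being the count in Section 4. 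A surjection between spaces of equal finite dimension is an isomorphism, so $\Psi$ is the asserted Hopf algebra isomorphism.

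The step I expect to be the main obstacle is the cross-relation computation, and within it the matching of conventions: the $cop$/$coop$ twists, the possible appearance of $S^{-1}$ in the double's multiplication, and the direction of $\langle\,,\rangle$ all affect the sign and the placement of $\omega_i$ versus $\omega_i'$, so one must fix these conventions so that $(F4)$ emerges with the factor $1/(r_i-s_i)$ rather than its negative. Once the conventions are pinned down the substitution of the pairing values from Proposition \ref{P22} is routine bookkeeping, but it is the place where an error would most readily arise; a secondary subtlety is the descent of $\langle\,,\rangle$ to the restricted quotient, which relies on $\ell$ being odd and on the generators of $\mathcal{I}$ lying in the radical.
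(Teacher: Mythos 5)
Your proposal is correct and is essentially the paper's own argument: the paper proves Theorem \ref{62} simply by invoking ``the same argument of Theorem 4.8 in \cite{BW3}'', which is precisely the construction you spell out (transport the double along the isomorphism of Lemma \ref{61}, match the cross relations of the double with $(F2)$--$(F4)$ via the pairing of Proposition \ref{P22}, and conclude by the dimension count $\ell^{28}\cdot\ell^{28}=\ell^{56}$). Your version merely makes explicit the steps the paper delegates to the citation.
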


\section{Integrals }
 In this section, we determine
the left and right integrals in the Borel subalgebra $\mathfrak{b}$
of $\mathfrak{u}_{r,s}(F_4)$, which play an important role in knot invariant theory.
 Let $H$ be a
finite-dimensional Hopf algebra. An element $y\in H$ is a
\textit{left} (resp., \textit{right}) \textit{integral} if
$ay=\varepsilon(a)y$ (resp., $ya=\varepsilon(a)y$) for all $a \in
H$. The left (resp., right) integrals form a one-dimensional ideal
$\int^l_H$ (resp., $\int^r_H$) of $H$, and $S_H(\int^r_H)=\int^l_H$
under the antipode $S_H$ of $H$.

\smallskip
When $y\neq 0$ is a left integral of $H$, there exists a unique
group-like element $\gamma$ in the dual algebra $H^*$ (the so-called
\textit{distinguished group-like element} of $H^*$) such that
$ya=\gamma(a)y$.  If we had begun instead with a right integral
$y'\in H$, then we would have $ay'=\gamma^{-1}(a)y'$. This follows
from the fact that group-like elements are invertible, and the
following calculation, which can be found in [\textbf{M}, p.22]:
When $y' \in \int^r_H$, $S_H(y')$ is a nonzero multiple of $y$, so
that $S_H(y')S_H(a)=\gamma(S_H(a))S_H(y')$ for all $a \in H$.
Applying $S_H^{-1}$, we find $ay'=\gamma(S_H(a))y'$. As $\gamma$ is
group-like, we have
$\gamma(S_H(a))=S_{H^*}(\gamma)(a)=\gamma^{-1}(a).$

\smallskip
Now if $\lambda \neq 0$ is a right integral of $H^*$, then there
exists a unique group-like element $g$ of $H$ (the
\textit{distinguished group-like element} of $H$) such that
$\xi\lambda=\xi(g)\lambda$ for all $\xi \in H^*$. The algebra $H$ is
\textit{unimodular} (i.e., $\int^l_H=\int^r_H$) if and only if
$\gamma=\varepsilon$; and the dual algebra $H^*$ is unimodular if
and only if $g=1$.

\smallskip
The left and right $H^*$-module actions on $H$ are given by
$$
\xi\rightharpoonup a=\sum a_{(1)}\xi(a_{(2)}),\qquad
a\leftharpoonup\xi=\sum \xi(a_{(1)})a_{(2)}, \leqno(7.1)
$$
for all $\xi \in H^*$ and $a\in H$. In particular,
$\varepsilon\rightharpoonup a=a=a\leftharpoonup\varepsilon$ for all
$a \in H$.

\begin{theorem}\label{71}
Let $t=\prod\limits_{i=1}^4(1+\omega_i+\cdots+\omega_i^{\ell-1})$ and
$x=E_{\b_{24}}^{\ell-1}E_{\b_{23}}^{\ell-1}\cdots E_{\b_1}^{\ell-1},$
 then $y=tx$ $(resp.,\  y^\prime=xt )$ is a left $(resp.,\  right)$
integral in $\mathfrak{b}$.
\end{theorem}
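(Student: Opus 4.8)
The plan is to verify directly that $y=tx$ satisfies the defining identity $ay=\varepsilon(a)y$ for every $a$ in a generating set of $\mathfrak b$. Since $\{a\in\mathfrak b : ay=\varepsilon(a)y\}$ is closed under multiplication (if $a_1y=\varepsilon(a_1)y$ and $a_2y=\varepsilon(a_2)y$ then $a_1a_2y=\varepsilon(a_1)\varepsilon(a_2)y$), is a subspace, and contains $1$, it is a subalgebra; as $\mathfrak b$ is generated by the $E_i$ and $\omega_i^{\pm1}$ with $\varepsilon(E_i)=0$ and $\varepsilon(\omega_i^{\pm1})=1$, it will suffice to prove $\omega_i y=y$ and $E_i y=0$ for each $i$. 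The action of the $\omega_i$ is immediate: in $\mathfrak u$ we have $\omega_i^\ell=1$, so left multiplication by $\omega_i$ cyclically permutes the summands of the $i$-th factor $1+\omega_i+\cdots+\omega_i^{\ell-1}$ of $t$ and fixes the remaining (commuting) factors. Hence $\omega_i t=t$, so $\omega_i y=\omega_i t x=tx=y$, and likewise $\omega_i^{-1}y=y$.

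The crux is the identity $E_i x=0$ in $\mathfrak u^+$. Here I would exploit that $\mathfrak u^+$ is $Q^+$-graded with $\deg E_{\beta_k}=\beta_k$ and $\deg E_i=\alpha_i$, and that, by Theorem \ref{BaseE} together with the relations $E_{\beta_k}^\ell=0$, the monomials $E_{\beta_{24}}^{c_{24}}\cdots E_{\beta_1}^{c_1}$ with $0\le c_k\le\ell-1$ form a basis. Consequently every weight occurring in $\mathfrak u^+$ has the form $\sum_k c_k\beta_k$ with $0\le c_k\le\ell-1$, and since each $\beta_k$ is a nonnegative combination of simple roots, the $\alpha_j$-coordinate of any such weight is at most $(\ell-1)\sum_k[\beta_k:\alpha_j]$, which is exactly the $\alpha_j$-coordinate of $\deg x=\sum_k(\ell-1)\beta_k$. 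Thus $\deg x$ is componentwise maximal among the weights of $\mathfrak u^+$, whereas $E_i x$ would have weight $\deg x+\alpha_i$, strictly larger in the $\alpha_i$-coordinate; no basis monomial carries such a weight, so $E_i x=0$. The mirror argument (maximality is symmetric) gives $xE_i=0$.

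It remains to assemble the pieces. Moving $E_i$ to the right through $t$ by (F2), namely $E_i\omega_j^{k}=a_{ji}^{-k}\omega_j^{k}E_i$, yields $E_i t=t''E_i$ with $t''=\prod_{j=1}^4\bigl(\sum_{k=0}^{\ell-1}a_{ji}^{-k}\omega_j^{k}\bigr)\in\mathbb{K}G$, whence $E_i y=E_i t x=t''(E_i x)=0$; combined with $\omega_i y=y$ this shows $y=tx$ is a left integral. For the right integral $y'=xt$, the same facts give $y'\omega_i=xt\omega_i=xt=y'$, while moving $E_i$ to the left through $t$ via $\omega_j^{k}E_i=a_{ji}^{k}E_i\omega_j^{k}$ gives $tE_i=E_i\hat t$ with $\hat t\in\mathbb{K}G$, so that $y'E_i=x(tE_i)=(xE_i)\hat t=0$; hence $y'=xt$ is a right integral. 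The only genuinely non-formal input is the top-weight vanishing $E_i x=xE_i=0$, which is where I expect the main work (and where the explicit commutation relations of Section 3 could instead be invoked); the rest is bookkeeping with (F2) and $\omega_i^\ell=1$.
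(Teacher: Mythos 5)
Your proposal is correct, and its overall scaffolding (reduce to the generators, observe $\omega_k t=t$ by the cyclic-permutation argument, push $E_k$ through $t$ via (F2), reduce everything to $E_kx=0$ and $xE_k=0$) coincides with the paper's. Where you genuinely diverge is in the proof of the crucial vanishing $E_kx=0$. The paper proves the stronger statement that $E_{\beta_i}\,E_{\beta_j}^{\ell-1}E_{\beta_{j-1}}^{\ell-1}\cdots E_{\beta_i}^{\ell-1}=0$ for all $i\le j$, by induction on $j-i$: it commutes $E_{\beta_i}$ past $E_{\beta_j}^{\ell-1}$ using the $q$-binomial formula of Lemma \ref{l9} and then uses the convexity property of Theorem \ref{t8} (that $(\mbox{ad}_qE_{\beta_j})_R^{m}E_{\beta_i}$ lies in the subalgebra generated by the intermediate root vectors) to apply the induction hypothesis to every resulting term. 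You instead give a top-degree argument: the restricted monomials $E_{\beta_{24}}^{c_{24}}\cdots E_{\beta_1}^{c_1}$ with $0\le c_k\le\ell-1$ span the positive part (this needs only Theorem \ref{BaseE} and $E_{\beta_k}^{\ell}=0$), every occurring $Q^+$-degree therefore has $\alpha_i$-coordinate at most $(\ell-1)\sum_k[\beta_k:\alpha_i]$, and $E_ix$ sits in degree $\deg x+\alpha_i$, which exceeds this bound, hence vanishes. This is shorter and entirely bypasses the commutation machinery of Section 3 for this step; the one point worth making explicit is that the $Q^+$-grading survives the passage to the quotient, which is cleanest to see by working in $U^+$ modulo the homogeneous two-sided ideal generated by the $E_{\beta_k}^{\ell}$ (an ideal whose image in $\mathfrak{u}$ already kills $E_ix$), rather than by appealing to a grading on $\mathfrak{u}^+$ itself. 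What the paper's longer induction buys in exchange is the finer family of identities $E_{\beta_i}\widehat{E_{j,i}}=0$ for partial products, which it reuses verbatim in Section 8 (proof of Proposition 8.2, where $xE_k=0$ is quoted from this argument); your symmetric remark recovers $xE_i=0$ equally well, so nothing downstream is lost.
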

\begin{proof}[{\bf Proof.}]
We have to show that $by=\varepsilon(b)y$
holds for all $b\in\mathfrak{b}$. It suffices to verify this for the generators $\omega_k$ and $E_k$
 since $\varepsilon$ is  an algebra homomorphism.

Observe that $\omega_kt=t=\vn(\om_k)t$ for all $k=1,2,3,4$, as
the $\om_i$'s commute and
$\om_k(1+\om_k+\cdots+\om_k^{\ell-1})=1+\om_k+\cdots+\om_k^{\ell-1}$.
From that, relation $\om_k y=\vn(\om_k)y$ is clear,  for  $1\leq k\leq4$.

Now we compute $E_ky$. We get
$$E_kt=\prod_{i=1}^4(1+\langle\omega_k^\prime,\omega_i\rangle^{-1}\omega_i+\cdots+\langle\omega_k^\prime,\omega_i\rangle^{-(\ell-1)}\omega_i^{\ell-1})E_k$$
from a direct computation. It remains to verify that
$E_kx=0=\varepsilon(E_k)x$.

We first prove a more general result: for $1\leq i\leq j\leq24$, let
$$\widehat{E_{j,i}}={E}_{\b_j}^{\ell-1}{E}_{\b_{j-1}}^{\ell-1}\cdots{E}_{\b_i}^{\ell-1},$$
we will prove ${E}_{\b_i}\widehat{{E}_{j,i}}=0$ by
induction on $j-i$.

If $j-i=0$, then
${E}_{\b_i}{E}_{\b_i}^{\ell-1}={E}_{\b_i}^\ell=0$. Assume that
the result holds for $j-i\leq n$, if $j-i=n+1$, using Lemma
\ref{l9} for
$p=\langle\omega_{\b_j}^\prime,\omega_{\b_j}\rangle,
q=\langle\omega_{\b_j}^\prime,\omega_{\b_i}\rangle$ and
Theorem \ref{t8}, we get
$${E}_{\b_i}{E}_{\b_j}^{\ell-1}=q^{\ell-1}{E}_{\b_j}^{\ell-1}{E}_{\b_i}
+\sum_{k=1}^{\ell-1}q^{k}\left[\ell{-}1 \atop k\right]_{p}{E}_{\b_j}^{k}\left((ad_{q}E_{\b_j})_{R}^{\ell-1-k}E_{\b_i}\right),$$
where $(ad_{q}E_{\b_j})_{R}^{\ell-1-k}E_{\b_i}\in B_{i,j}$.
 So
\begin{equation*}
\begin{split}
{E}_{\b_i}\widehat{{E}_{j,i}}&={E}_{\b_i}({E}_{\b_j}^{\ell-1}{E}_{\b_{j-1}}^{\ell-1}\cdots E_{\b_i}^{\ell-1})\\
&=q^{\ell-1}{E}_{\b_j}^{\ell-1}{E}_{\b_i}({E}_{\b_{j-1}}^{\ell-1}{E}_{\b_{j-2}}^{\ell-1}\cdots {E}_{\b_i}^{\ell-1})\\
&\quad + \sum_{k=1}^{\ell-1}q^{k}\left[\ell{-}1 \atop k\right]_{p}{E}_{\b_j}^{k}\left((ad_{q}E_{\b_j})_{R}^{\ell-1-k}E_{\b_i}\right)  ({E}_{\b_{j-1}}^{\ell-1}{E}_{\b_{j-2}}^{\ell-1}\cdots{E}_{\b_i}^{\ell-1}).
\end{split}
\end{equation*}
By the induction hypothesis, we have
$${E}_{\b_i}({E}_{\b_{j-1}}^{\ell-1}{E}_{\b_{j-2}}^{\ell-1}\cdots {E}_{\b_i}^{\ell-1})=0,$$
for every $E_{\b_t}\in B_{i,j}$,  $i<t<j$. By the
induction hypothesis, we have
$${E}_{\b_t}({E}_{\b_{j-1}}^{\ell-1}{E}_{\b_{j-2}}^{\ell-1}\cdots{E}_{\b_i}^{\ell-1})={E}_{\b_t}({E}_{\b_{j-1}}^{\ell-1}{E}_{\b_{j-2}}^{\ell-1}\cdots{E}_{\b_t}^{\ell-1}){E}_{\b_{t-1}}^{\ell-1}\cdots {E}_{\b_i}^{\ell-1}=0,$$
therefore, ${E}_{\b_i}\widehat{{E}_{j,i}}=0$, in
particular, $E_kx=0, 1\leq k\leq4$.

The proof for the right integral is similar.
\end{proof}

A finite-dimensional Hopf algebra $H$ is semisimple if and only if
$\varepsilon(\int_H^l)\neq0$ or $\varepsilon(\int_H^r)\neq0$. For
$H=\mathfrak{b}$, $y$ and $y'$ are respectively bases of
$\int_\mathfrak{b}^l$ and $\int_\mathfrak{b}^r$, we get
$\varepsilon(y)=0=\varepsilon(y^\prime)$ by Theorem \ref{71}, so we
get

\begin{coro}
$\mathfrak{b}$ is a finite-dimensional nonsemisimple pointed Hopf
algebra. $\hfill\Box$
\end{coro}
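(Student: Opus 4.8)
The plan is to read the corollary off the semisimplicity criterion recalled immediately above, namely that a finite-dimensional Hopf algebra is semisimple precisely when its counit is nonzero on a nonzero left (equivalently right) integral. Proposition \ref{p51} already records that $\mathfrak{b}$ is a finite-dimensional pointed Hopf algebra, so the only remaining task is to prove that $\mathfrak{b}$ is \emph{not} semisimple, and for that it suffices to produce one nonzero integral annihilated by $\varepsilon$.

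First I would take the explicit left integral $y = tx$ furnished by Theorem \ref{71}, where $t = \prod_{i=1}^4(1+\om_i+\cdots+\om_i^{\ell-1})$ and $x = E_{\b_{24}}^{\ell-1}\cdots E_{\b_1}^{\ell-1}$ (and, symmetrically, the right integral $y' = xt$). Since $\varepsilon$ is an algebra homomorphism, $\varepsilon(y) = \varepsilon(t)\,\varepsilon(x)$, which separates the computation into a toral part and a nilpotent part. The toral factor is nonzero — using $\varepsilon(\om_i) = 1$ one finds $\varepsilon(t) = \ell^4 \neq 0$ in the characteristic-zero field $\mathbb{K}$ — so the vanishing must, and does, come entirely from $x$. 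The key step is thus to verify that $\varepsilon(E_{\b_i}) = 0$ for \emph{every} positive root $\b_i$. For the simple roots this is part of the defining Hopf structure; for the non-simple quantum root vectors I would induct on the height $h(\b_i)$, using that each $E_{\b_i}$ arises as an $(r,s)$-bracket $[E_\a,E_\b] = E_\a E_\b - \langle \om_\b',\om_\a\rangle E_\b E_\a$ with $\b_i = \a+\b$ and $\a,\b$ of strictly smaller height, so that multiplicativity of $\varepsilon$ together with the inductive hypothesis forces $\varepsilon(E_{\b_i}) = 0$.

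Since $\ell$ is odd and $\ell = 1$ is excluded by $r^2 \neq s^2$, we have $\ell - 1 \geq 1$, whence $\varepsilon(E_{\b_i}^{\ell-1}) = \varepsilon(E_{\b_i})^{\ell-1} = 0$ for each $i$; multiplying these out gives $\varepsilon(x) = 0$ and therefore $\varepsilon(y) = 0$, and the identical argument applied to $xt$ gives $\varepsilon(y') = 0$. As $y$ and $y'$ respectively span the one-dimensional ideals $\int_{\mathfrak{b}}^l$ and $\int_{\mathfrak{b}}^r$, the criterion now yields that $\mathfrak{b}$ is nonsemisimple, which combined with Proposition \ref{p51} is exactly the assertion. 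I do not expect a genuine obstacle here: the argument is short bookkeeping once Theorem \ref{71} is in hand, and the only point requiring the slightest care — the inductive vanishing of $\varepsilon$ on the higher root vectors — is immediate from their bracket construction.
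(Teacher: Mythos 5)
Your argument is correct and is essentially the paper's own proof: both invoke the criterion that semisimplicity is equivalent to $\varepsilon$ being nonzero on the integrals, and both conclude $\varepsilon(y)=\varepsilon(y')=0$ from the explicit integrals $y=tx$, $y'=xt$ of Theorem \ref{71}, since each $E_{\b_i}$ lies in the augmentation ideal. You merely spell out in more detail (the height induction for $\varepsilon(E_{\b_i})=0$ and the computation $\varepsilon(t)=\ell^4$) what the paper leaves implicit.
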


\section{To be a ribbon Hopf algebra}
A finite-dimensional Hopf algebra $H$ is
\textit{quasitriangular} if there is an invertible element $R=\sum
x_i\ot y_i$ in $H\ot H$ such that $\Delta^{op}(a)=R\Delta(a)R^{-1}$
for all $a \in H$, and $R$ satisfies the relations $(\Delta\ot
id)R=R_{13}R_{23}$, $(id\ot\Delta )R=R_{13}R_{12}$, where
$R_{12}=\sum x_i\ot y_i\ot 1$, $R_{13}=\sum x_i\ot 1\ot y_i$, and
$R_{23}=\sum 1\ot x_i\ot y_i$.

Write $u=\sum S(y_i)x_i$. Then $c=uS(u)$ is central in $H$ (cf.
\cite{Ka}) and is referred to as the \textit{Casimir element}.

An element $v\in H$ is a \textit{quasi-ribbon element} of
quasitriangular Hopf algebra $(H, R)$ if $\textrm{(i)}$\ $v^2=c$, \
$\textrm{(ii)}$\ $S(v)=v$, \ $\textrm{(iii)}$\ $\vn(v)=1$,\
$\textrm{(iv)}$\ $\Delta(v)=(R_{21}R)^{-1}(v\ot v)$, where
$R_{21}=\sum y_i\ot x_i$. If moreover $v$ is central in $H$, then
$v$ is a \textit{ribbon element}, and $(H, R, v)$ is called a
\textit{ribbon Hopf algebra}. Ribbon elements provide a very
effective means of constructing invariants of $3$-manifolds (cf.
\cite{RT}).

 The Drinfel'd double $D(A)$ of a
finite-dimensional Hopf algebra $A$ is quasitriangular, and
Kauffman-Radford (\cite{KR}) proved a criterion for $D(A)$ to be
ribbon.

\begin{theorem} {\rm([KR, Thm. 3])}\label{81}
Assume $A$ is a finite-dimensional Hopf algebra, let $g$ and
$\gamma$ be the distinguished group-like elements of $A$ and $A^*$
respectively. Then

$(\mathrm{i})$ \ $(D(A),R)$ has a quasi-ribbon element if and only
if there exist group-like elements $h\in A$, $\delta \in A^*$ such
that $h^2=g$, $\delta^2=\gamma$.

$(\mathrm{ii})$ \ $(D(A),R)$ has a ribbon element if and only if
there exist  $h$  and $\delta$  as in $(\mathrm{i})$ such that
$S^2(a)=h(\delta\rightharpoonup a\leftharpoonup \delta^{-1})h^{-1},
\ \forall\; a\in A$.
\end{theorem}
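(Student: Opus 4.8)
The statement is the Kauffman--Radford criterion, so the plan is to recall the general ribbon theory of quasitriangular Hopf algebras and then specialize it to the double $H=D(A)$. First I would introduce the Drinfel'd element $u=\sum S(y_i)x_i$ of $(D(A),R)$ and record its standard properties: $u$ is invertible, $S^2(a)=uau^{-1}$, the Casimir $c=uS(u)=S(u)u$ is central, $\varepsilon(u)=1$, and $\Delta(u)=(R_{21}R)^{-1}(u\ot u)$. From these one obtains the grouplike-square-root reformulation of the ribbon axioms, which is the engine of the whole argument: given a quasi-ribbon element $v$, set $\ell=uv^{-1}$; the four axioms $v^2=c$, $S(v)=v$, $\varepsilon(v)=1$ and $\Delta(v)=(R_{21}R)^{-1}(v\ot v)$ force $\ell$ to be a grouplike element with $\ell^2=uS(u)^{-1}$, and conversely $v=u\ell^{-1}$ is a quasi-ribbon element for every such grouplike $\ell$. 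A short computation with $S^2(a)=uau^{-1}$ then shows that $v=u\ell^{-1}$ is \emph{central} --- that is, a genuine ribbon element --- exactly when $\ell$ implements $S^2$ by conjugation, $S^2(a)=\ell a\ell^{-1}$ for all $a$.

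This reduces the whole theorem to understanding the single grouplike element $g_0:=uS(u)^{-1}$ inside $D(A)$. The key input is Radford's formula for the fourth power of the antipode, $S^4(a)=g\,(\gamma\rightharpoonup a\leftharpoonup\gamma^{-1})\,g^{-1}$, where $g$ and $\gamma$ are precisely the distinguished grouplike elements of $A$ and $A^*$ in the statement, together with the explicit algebra and coalgebra structure of $D(A)$. Using these I would compute $g_0=uS(u)^{-1}$ and identify it with the grouplike of $D(A)$ assembled from the pair $(\gamma,g)$. Since the grouplike group of the double factors as $G(D(A))\cong G(A^*)\times G(A)$, with the $A^*$-part and the $A$-part multiplying independently, a grouplike square root of $g_0$ exists if and only if each factor admits one, i.e. if and only if there are grouplike elements $\delta\in A^*$ and $h\in A$ with $\delta^2=\gamma$ and $h^2=g$. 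Combined with the first paragraph, this is exactly statement (i).

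For (ii) I would take the square root $\ell$ produced in (i), which as an element of $D(A)$ is the product of $h\in A$ and $\delta\in A^*$, and impose the centrality condition $S^2(a)=\ell a\ell^{-1}$ from the general lemma. It suffices to check this on $a\in A\subseteq D(A)$, since $D(A)$ is generated by $A$ and $A^*$ and the requirement on the $A^*$-part is automatic from $\delta^2=\gamma$ and Radford's formula. Expanding $\ell a\ell^{-1}=h\,(\delta a\delta^{-1})\,h^{-1}$ by the straightening (cross) relations of the double --- which convert conjugation of $a\in A$ by the $A^*$-grouplike $\delta$ into the left and right Hopf actions $\delta\rightharpoonup a\leftharpoonup\delta^{-1}$ of $(7.1)$ --- turns $S^2(a)=\ell a\ell^{-1}$ into the displayed identity $S^2(a)=h(\delta\rightharpoonup a\leftharpoonup\delta^{-1})h^{-1}$, which yields (ii).

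The step I expect to be the main obstacle is the explicit identification of $g_0=uS(u)^{-1}$ as a concrete grouplike of $D(A)$ in terms of $g$ and $\gamma$: this requires assembling the universal $R$-matrix of the double in a chosen dual pair of bases, evaluating $u$ and $S(u)$, and matching the outcome against Radford's $S^4$ formula and the distinguished grouplike elements. The delicate point is the bookkeeping of which distinguished element lands in which tensor factor and with which inverse and normalization convention. Once $g_0$ is pinned down, the factorization of $G(D(A))$ and the translation of conjugation by $\delta$ into the $\rightharpoonup,\leftharpoonup$ actions are routine, so this is the crux on which the precise form of conditions (i) and (ii) rests.
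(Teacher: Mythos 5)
This theorem is quoted in the paper as [KR, Thm.~3] and is not proved there, so there is no internal proof to compare against; what you have written is, in outline, precisely the original Kauffman--Radford argument. Your reduction is the right one: for any finite-dimensional quasitriangular Hopf algebra the Drinfel'd element $u$ satisfies $S^2=u(\cdot)u^{-1}$ and $\Delta(u)=(R_{21}R)^{-1}(u\otimes u)$; a quasi-ribbon element $v$ commutes with $u$ (since $S(v)=v$ forces $S^2(v)=v$), so $\ell=uv^{-1}$ is grouplike with $\ell^2=uS(u)^{-1}$, and centrality of $v$ is equivalent to $S^2=\ell(\cdot)\ell^{-1}$ --- this is KR's Theorem~1. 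The specialization to $D(A)$ then rests on exactly the two facts you isolate: Radford's identification of $uS(u)^{-1}$ with the grouplike of $D(A)$ assembled from $(\gamma,g)$ (equivalently his formula $S^4(a)=g(\gamma\rightharpoonup a\leftharpoonup\gamma^{-1})g^{-1}$), and the decomposition $G(D(A))\cong G(A^*)\times G(A)$, which holds because grouplike elements of $A$ and of $A^*$ commute inside the double, so that $(\delta\otimes h)^2=\delta^2\otimes h^2$ and square roots can be extracted factorwise. Two caveats. First, the computation of $uS(u)^{-1}$, which you correctly flag as the crux, is genuinely the content of the theorem and is left undone in your sketch; without it the proposal is a strategy rather than a proof. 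Second, your claim that the condition $S^2(\xi)=\ell\xi\ell^{-1}$ on the $A^*$-part is ``automatic from $\delta^2=\gamma$ and Radford's formula'' is not quite the right justification: it holds because the condition on $A^*$ is the transpose, under the evaluation pairing, of the condition on $A$ (with $h$ and $\delta$ exchanging roles), so the two conditions are equivalent rather than one being vacuous. With those two points supplied, your argument is the standard and correct one.
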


Next we determine the distinguished group-like elements in
$\mathfrak{b}$ and $\mathfrak{b}^*$, since all group-like elements
are precisely the algebra homomorphisms in
$\mbox{Alg}_{\mathbb{K}}(\mathfrak{b},\mathbb{K})$. To verify that
it is distinguished, it suffices to consider its values on
generators.

\begin{prop}
Let $2\rho=16\alpha_1+30\alpha_2+42\alpha_3+22\alpha_4$, where
$\rho$ is the half sum of positive roots of the root system $F_4$. Let $\gamma\in
\mbox{Alg}_{\mathbb{K}}(\mathfrak{b},\mathbb{K})$ be defined by
$$\gamma(E_k)=0,\quad \gamma(\omega_k)=\langle\omega_1',\omega_k\rangle^{16}\langle\omega_2',\omega_k\rangle^{30}\langle\omega_3',\omega_k\rangle^{42}\langle\omega_4',\omega_k\rangle^{22},$$
then $\gamma$ is a distinguished group-like element in
$\mathfrak{b}^*$.
\end{prop}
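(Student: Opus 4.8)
The plan is to identify $\gamma$ with the distinguished group-like element of $\mathfrak{b}^*$ attached to the left integral $y=tx$ produced in Theorem \ref{71}, where $t=\prod_{i=1}^4(1+\omega_i+\cdots+\omega_i^{\ell-1})$ and $x=E_{\b_{24}}^{\ell-1}\cdots E_{\b_1}^{\ell-1}$. Since $\gamma\in\mathrm{Alg}_{\mathbb{K}}(\mathfrak{b},\mathbb{K})$, it is automatically a group-like element of $\mathfrak{b}^*$, so it remains only to verify the defining relation $ya=\gamma(a)y$ for all $a\in\mathfrak{b}$. Because $\gamma$ is an algebra map, the set $\{a\in\mathfrak{b}\mid ya=\gamma(a)y\}$ is a subalgebra (indeed $y(ab)=(ya)b=\gamma(a)(yb)=\gamma(a)\gamma(b)y=\gamma(ab)y$), so it suffices to check the identity on the generators $\omega_k$ and $E_k$, $1\le k\le 4$.

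For the toral generators I would commute $\omega_k$ to the left through $x$. From the weight relation $\omega_k E_{\b_i}\omega_k^{-1}=\langle\omega_{\b_i}',\omega_k\rangle E_{\b_i}$ one gets $E_{\b_i}^{\ell-1}\omega_k=\langle\omega_{\b_i}',\omega_k\rangle^{-(\ell-1)}\omega_k E_{\b_i}^{\ell-1}$, whence $x\omega_k=\bigl(\prod_{i=1}^{24}\langle\omega_{\b_i}',\omega_k\rangle^{-(\ell-1)}\bigr)\omega_k x$. Using $t\omega_k=t$ (already noted in the proof of Theorem \ref{71}) this gives $y\omega_k=\bigl(\prod_{i=1}^{24}\langle\omega_{\b_i}',\omega_k\rangle^{-(\ell-1)}\bigr)y$. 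Since $r,s$ are $\ell$th roots of unity, every structure constant $a_{ij}$, and hence every pairing $\langle\omega_{\b_i}',\omega_k\rangle$, is an $\ell$th root of unity, so $\langle\omega_{\b_i}',\omega_k\rangle^{-(\ell-1)}=\langle\omega_{\b_i}',\omega_k\rangle$. Finally, the $24$ positive roots sum to $\sum_{i=1}^{24}\b_i=2\rho=16\alpha_1+30\alpha_2+42\alpha_3+22\alpha_4$, and the pairing is multiplicative in the first slot, so $\prod_{i=1}^{24}\langle\omega_{\b_i}',\omega_k\rangle=\langle\omega_1',\omega_k\rangle^{16}\langle\omega_2',\omega_k\rangle^{30}\langle\omega_3',\omega_k\rangle^{42}\langle\omega_4',\omega_k\rangle^{22}=\gamma(\omega_k)$, exactly as required.

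For the generators $E_k$ I must show $yE_k=\gamma(E_k)y=0$, i.e.\ $xE_k=0$. The cleanest route is the $Q^+$-grading on $\mathfrak{u}^+$ (the subalgebra of $\mathfrak{b}$ generated by $E_1,\dots,E_4$) in which $E_{\b_i}$ is homogeneous of degree $\b_i$: by the PBW basis (Theorem \ref{BaseE} and its restricted version), the top-degree component of $\mathfrak{u}^+$ has degree $\sum_{i=1}^{24}(\ell-1)\b_i=(\ell-1)\cdot 2\rho$ and is one-dimensional, spanned by $x$. Since $xE_k$ is homogeneous of the strictly larger degree $(\ell-1)2\rho+\alpha_k$, which lies in a vanishing graded piece, we get $xE_k=0$ and hence $yE_k=t(xE_k)=0$; alternatively one can argue by the mirror of the inductive vanishing statement $E_{\b_i}\widehat{E_{j,i}}=0$ proved in Theorem \ref{71}. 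Combining the two cases yields $ya=\gamma(a)y$ on all generators, so $\gamma$ is the distinguished group-like element of $\mathfrak{b}^*$. I expect the main things to get right to be the reduction of the exponent $-(\ell-1)$ to $1$ via the root-of-unity hypothesis and the identity $\sum\b_i=2\rho$ producing the exponents $16,30,42,22$; the vanishing $xE_k=0$ is the one step genuinely using the restricted PBW structure rather than just the defining relations.
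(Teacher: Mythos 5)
Your argument is correct and follows essentially the same route as the paper's proof: reduce to the generators, compute $y\omega_k$ by commuting $\omega_k$ through $x$ and using $\sum_{i=1}^{24}\b_i=2\rho$ together with $\langle\omega_j',\omega_k\rangle^{-(\ell-1)}=\langle\omega_j',\omega_k\rangle$, and kill $yE_k$ via $xE_k=0$. The only (harmless) divergence is that the paper obtains $xE_k=0$ by citing the right-integral half of the proof of Theorem \ref{71}, whereas you give a self-contained $Q^+$-grading/height argument for that one step; both are valid, and your grading argument is arguably cleaner since it avoids rerunning the inductive commutation estimates.
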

\begin{proof}[{\bf Proof.}] It suffices to verify $ya=\gamma(a)y$ for
$a=E_k$, $a=\omega_k$ $(1\leq k\leq4)$ and $y=tx$. It follows from the
proof of Theorem \ref{71} that $xE_k=0$ $(1\leq k\leq4)$, so
$yE_k=txE_k=0=\gamma(E_k)y$, and
\begin{equation*}
\begin{split}
y\omega_k&=tx\omega_k\\
&=t({E}_{\b_{24}}^{\ell-1}{E}_{\b_{23}}^{\ell-1}\cdots{E}_{\b_1}^{\ell-1})\omega_k\\
&=\langle\omega_1',\omega_k\rangle^{-16(\ell-1)}\langle\omega_2',\omega_k\rangle^{-30(\ell-1)}\langle\omega_3',\omega_k\rangle^{-42(\ell-1)}\langle\omega_4',\omega_k\rangle^{-22(\ell-1)}t\omega_kx\\
&=\langle\omega_1',\omega_k\rangle^{16}\langle\omega_2',\omega_k\rangle^{30}\langle\omega_3',\omega_k\rangle^{42}\langle\omega_4',\omega_k\rangle^{22}t\omega_kx\\
&=\gamma(\omega_k)y,
\end{split}
\end{equation*}
where $1\leq k\leq4$.
\end{proof}

With the assumption of Lemma \ref{61}, there exists a Hopf algebra
isomorphism $\phi:(\mathfrak{b}')^{coop}\rightarrow\mathfrak{b}^*$,
$\phi(\omega_j')=\gamma_j$, $ \phi(F_j)=\eta_j$, which induces a Hopf
pairing
$(\cdot|\cdot):\mathfrak{b}'\times\mathfrak{b}\rightarrow\mathbb{K}$,
where
$$(F_j|E_i)=\delta_{ij}, \quad (\omega_j'|\omega_i)=\langle\omega_j',\omega_i\rangle,$$ and
$(\cdot|\cdot)$ takes value $0$ on the other pairs of generators.  Write
$\omega_{2\rho}'=\omega_1'^{16}\omega_2'^{30}\omega_3'^{42}\omega_4'^{22}$,
then $(\omega_{2\rho}'|b)=\gamma(b)$ holds for all
$b\in\mathfrak{b}$, that is, $\gamma=(\omega_{2\rho}'|\cdot)$.

Note that there is a Hopf algebra isomorphism
$\mathfrak{b}_{s^{-1},r^{-1}}\cong(\mathfrak{b}')^{coop}\cong\mathfrak{b}^*$,
which is the composition $\phi\psi^{-1}$, where $\psi(F_i)=E_i$,
$\psi(\omega_i')=\omega_i$. Any left (resp., right) integral in
$\mathfrak{b}$ is mapped by $\phi\psi^{-1}$ to a left (resp., right)
integral in $\mathfrak{b}^*$, so we get

\begin{prop}\label{83}
Let $\lambda=\nu\eta, \lambda'=\eta\nu\in\mathfrak{b}^*$, where
$$\nu=\prod_{i=0}^4(1+\gamma_i+\cdots+\gamma_i^{\ell-1}),\,  \eta=\eta_{4}^{\ell-1}\eta_{34}^{\ell-1}\cdots\eta_{12}^{\ell-1}\eta_1^{\ell-1},$$
where for each good
 Lyndon word $w$, $\eta_w=[\eta_u,\eta_v]_{\langle\omega_u',\omega_v\rangle^{-1}}$, where $w=uv$ is
the Lyndon decomposition of $w$, then $\lambda, \lambda'$ are left and
right integrals in $\mathfrak{b}^*$. (Caution: $\eta_w$ here is
different from that in the proof of Theorem \ref{61}).$\hfill\Box$
\end{prop}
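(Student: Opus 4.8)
The plan is to obtain $\lambda$ and $\lambda'$ by transporting the integrals of the Borel part across the Hopf algebra isomorphism $\Theta:=\phi\psi^{-1}\colon\mathfrak{b}_{s^{-1},r^{-1}}\to\mathfrak{b}^*$ recorded just above the statement, using the elementary principle that an isomorphism of Hopf algebras carries left (resp.\ right) integrals to left (resp.\ right) integrals. Indeed, the defining condition $ay=\varepsilon(a)y$ is homogeneous in $y$, and if $\Theta$ is a Hopf isomorphism then for $b=\Theta(a)$ one has $b\,\Theta(y)=\Theta(ay)=\varepsilon(a)\Theta(y)=\varepsilon(b)\Theta(y)$, because $\varepsilon_{\mathfrak{b}^*}\circ\Theta=\varepsilon$; the same computation handles right integrals. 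By Theorem \ref{71} applied to the parameters $(s^{-1},r^{-1})$, the element $y=tx$ with $t=\prod_{i=1}^{4}(1+\omega_i+\cdots+\omega_i^{\ell-1})$ and $x=E_{\b_{24}}^{\ell-1}\cdots E_{\b_1}^{\ell-1}$ is a left integral of $\mathfrak{b}_{s^{-1},r^{-1}}$, and $y'=xt$ is a right integral. So the task reduces to computing $\Theta(y)$ and $\Theta(y')$ and identifying them, up to a nonzero scalar, with $\lambda=\nu\eta$ and $\lambda'=\eta\nu$.

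The toral factor is immediate: since $\psi^{-1}(\omega_i)=\omega_i'$ and $\phi(\omega_i')=\gamma_i$, we have $\Theta(\omega_i)=\gamma_i$, whence $\Theta(t)=\prod_{i=1}^{4}(1+\gamma_i+\cdots+\gamma_i^{\ell-1})=\nu$. For the unipotent factor, $\Theta$ being an algebra map gives $\Theta(x)=\Theta(E_{\b_{24}})^{\ell-1}\cdots\Theta(E_{\b_1})^{\ell-1}$, and the crux is to show that each $\Theta(E_{\b_k})$ is a nonzero scalar multiple of the dual root vector $\eta_{w_k}$ attached to the Lyndon word $w_k$ of $\b_k$. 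Granting this, $\Theta(x)=c\,\eta$ for some $c\in\mathbb{K}^*$, since the factors $\eta_4^{\ell-1},\eta_{34}^{\ell-1},\dots,\eta_1^{\ell-1}$ of $\eta$ occur in exactly the order $\b_{24},\dots,\b_1$ appearing in $x$. To establish the root-vector identification I would induct on the height of $\b_k$: via the co-standard factorization $u_k=u_iu_j$ one has $\Theta(E_{\b_k})=\Theta([E_{\b_i},E_{\b_j}])=[\Theta(E_{\b_i}),\Theta(E_{\b_j})]$ with the bracket scalar dictated by the $(s^{-1},r^{-1})$-structure carried through $\psi^{-1}$ and $\phi$, while by definition $\eta_{w_k}=[\eta_{u_i},\eta_{u_j}]_{\langle\omega_{u_i}',\omega_{u_j}\rangle^{-1}}$ is built by the analogous bracketing; one then checks that the two bracket scalars are compatible, using the pairing values $\langle\omega_i',\omega_j\rangle=a_{ji}$ and the way the parameter inversion $(r,s)\mapsto(s^{-1},r^{-1})$ converts the $F$-relations into $E$-relations.

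Once $\Theta(x)=c\,\eta$ is in hand the conclusion is formal: $\Theta(y)=\Theta(t)\Theta(x)=c\,\nu\eta=c\,\lambda$ is a left integral of $\mathfrak{b}^*$, so $\lambda=c^{-1}\Theta(y)$ is a left integral; symmetrically $\Theta(y')=\Theta(x)\Theta(t)=c\,\eta\nu=c\,\lambda'$ is a right integral, so $\lambda'$ is a right integral. The main obstacle is precisely this middle step, namely tracking the $(r,s)$-bracket scalars through $\psi^{-1}$ and $\phi$ so as to pin down the images $\Theta(E_{\b_k})$ against the $\eta_{w_k}$ of the statement; the explicit commutation data of Section 3 and the pairing of Proposition \ref{P22} are what feed this bookkeeping, and the delicate point is to confirm that neither the $(\ell-1)$-st powers nor the order-reversal inherent in passing to the dual introduce spurious correction terms.
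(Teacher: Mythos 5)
Your proposal follows exactly the paper's own route: the paper deduces this proposition in a single sentence by transporting the integrals of Theorem \ref{71} (applied to the Borel part with swapped parameters) through the Hopf algebra isomorphism $\phi\psi^{-1}:\mathfrak{b}_{s^{-1},r^{-1}}\to\mathfrak{b}^*$, which is precisely your $\Theta$. Your write-up is in fact more explicit than the paper's, since you isolate the one genuinely nontrivial step --- that $\Theta$ carries each quantum root vector to a nonzero scalar multiple of the corresponding $\eta_w$ --- which the paper passes over without comment.
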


\begin{prop}\label{84}
$g=\omega_{2\rho}^{-1}$ is a distinguished group-like element in
$\mathfrak{b}$, and for all $1\leq i\leq4$, we have
$$(\omega_i'\mid g)=\langle\omega_i',\omega_1\rangle^{-16}\langle\omega_i',\omega_2\rangle^{-30}\langle\omega_i',\omega_3\rangle^{-42}\langle\omega_i',\omega_4\rangle^{-22}=\gamma_i(g).$$
\end{prop}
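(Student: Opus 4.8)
The plan is to verify directly that $g=\omega_{2\rho}^{-1}$ satisfies the defining property of the distinguished group-like element of $\mathfrak{b}$, namely $\xi\lambda'=\xi(g)\lambda'$ for every $\xi\in\mathfrak{b}^*$, where $\lambda'=\eta\nu$ is the right integral of $\mathfrak{b}^*$ produced in Proposition \ref{83}. Since $g$ is a product of the $\omega_i$ it is group-like, so $\xi\mapsto\xi(g)$ is an algebra homomorphism $\mathfrak{b}^*\to\mathbb{K}$; consequently the relation $\xi\lambda'=\xi(g)\lambda'$ is multiplicative in $\xi$, because if it holds for $\xi$ and $\zeta$ then $(\xi\zeta)\lambda'=\xi(\zeta(g)\lambda')=\zeta(g)\xi(g)\lambda'=(\xi\zeta)(g)\lambda'$. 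As $\mathfrak{b}^*\cong(\mathfrak{b}')^{coop}$ is generated as an algebra by the $\gamma_k$ and $\eta_k$, it therefore suffices to check the relation on these two families, after which uniqueness of the distinguished group-like element finishes the first assertion.

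First I would dispose of the $\eta_k$. Here $\eta_k(g)=0$ since $\eta_k$ vanishes on all group-like elements, so I must show $\eta_k\lambda'=0$. Transporting the right integral $y'=xt$ of $\mathfrak{b}_{s^{-1},r^{-1}}$ through the algebra isomorphism $\phi\psi^{-1}\colon\mathfrak{b}_{s^{-1},r^{-1}}\to\mathfrak{b}^*$ identifies $\eta$ with the image of $x=E_{\beta_{24}}^{\ell-1}\cdots E_{\beta_1}^{\ell-1}$ and $\nu$ with the image of $t$, and sends $E_k$ to $\eta_k$. Since $\phi\psi^{-1}$ is an algebra homomorphism, the identity $E_kx=0$ established in the proof of Theorem \ref{71} yields $\eta_k\eta=0$, whence $\eta_k\lambda'=\eta_k\eta\nu=0=\eta_k(g)\lambda'$, as required.

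The substantive step is the computation for $\gamma_k$. The commutation rule $\gamma_k\eta_i\gamma_k^{-1}=\langle\omega_k',\omega_i\rangle\eta_i$ of Lemma \ref{61} extends, by induction on height along the bracketing that defines the $\eta_\beta$, to $\gamma_k\eta_\beta\gamma_k^{-1}=\langle\omega_k',\omega_\beta\rangle\eta_\beta$ for every $\beta\in\Phi^+$, since the $\gamma_k$-conjugation weight is additive under the bracket. Sliding $\gamma_k$ through $\eta=\prod_{\beta\in\Phi^+}\eta_\beta^{\ell-1}$ and using $\sum_{\beta\in\Phi^+}\beta=2\rho$ together with $\langle\omega_k',\omega_{2\rho}\rangle^{\ell}=1$ gives
$$\gamma_k\eta=\Big(\prod_{\beta\in\Phi^+}\langle\omega_k',\omega_\beta\rangle^{\ell-1}\Big)\eta\gamma_k=\langle\omega_k',\omega_{2\rho}\rangle^{-1}\eta\gamma_k,$$
while $\gamma_k\nu=\nu$ because $\gamma_k(1+\gamma_k+\cdots+\gamma_k^{\ell-1})=1+\gamma_k+\cdots+\gamma_k^{\ell-1}$ and the $\gamma_i$ commute; hence $\gamma_k\lambda'=\langle\omega_k',\omega_{2\rho}\rangle^{-1}\lambda'$. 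On the other hand, with $(m_1,m_2,m_3,m_4)=(16,30,42,22)$ and $\omega_{2\rho}=\omega_1^{16}\omega_2^{30}\omega_3^{42}\omega_4^{22}$, multiplicativity of the Hopf pairing on the group-like $g$ gives
$$\gamma_k(g)=(\omega_k'\mid g)=\prod_{j=1}^4\langle\omega_k',\omega_j\rangle^{-m_j}=\langle\omega_k',\omega_{2\rho}\rangle^{-1},$$
which is exactly the displayed formula and equals the scalar just computed, so $\gamma_k\lambda'=\gamma_k(g)\lambda'$.

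I expect the main obstacle to be the bookkeeping in the third paragraph: carefully justifying the height-induction that upgrades Lemma \ref{61} from the simple generators $\eta_i$ to the general positive-root vectors $\eta_\beta$, and confirming that the transport of $x$ and $t$ under $\phi\psi^{-1}$ really reproduces $\eta$ and $\nu$ (in the matching order and with the co-opposite bracket convention of Proposition \ref{83}), so that the reduction $\eta_k\eta=0$ from $E_kx=0$ is legitimate. Everything else is a formal consequence of multiplicativity and of $\langle\omega_k',\omega_{2\rho}\rangle^\ell=1$.
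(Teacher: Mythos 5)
Your proposal is correct and follows essentially the same route as the paper: the paper likewise verifies $\xi\lambda'=\xi(g)\lambda'$ on the generators by showing $\eta_k\lambda'=0$ and $\gamma_k\lambda'=\langle\omega_k',\omega_{2\rho}\rangle^{-1}\lambda'$, the only cosmetic difference being that it pulls $\lambda'$ back through $\phi^{-1}$ to $F\prod_i(1+\omega_i'+\cdots+\omega_i'^{\ell-1})$ in $\mathfrak{b}'$ and commutes $\omega_k'$ past $F=F_{\beta_{24}}^{\ell-1}\cdots F_{\beta_1}^{\ell-1}$ there, rather than commuting $\gamma_k$ past $\eta$ inside $\mathfrak{b}^*$ as you do. Your explicit reduction to generators and the height induction for $\gamma_k\eta_\beta\gamma_k^{-1}=\langle\omega_k',\omega_\beta\rangle\eta_\beta$ merely spell out steps the paper leaves implicit.
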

\begin{proof}[{\bf Proof.}] Let $F= {F}_{\b_{24}}^{\ell-1}
{F}_{\b_{23}}^{\ell-1}\cdots {F}_{\b_1}^{\ell-1}$, then
$$\omega_k'F=\langle\omega_i',\omega_1\rangle^{-16}\langle\omega_i',\omega_2\rangle^{-30}\langle\omega_i',\omega_3\rangle^{-42}\langle\omega_i',\omega_4\rangle^{-22}F\omega_k'.$$
Since
$\phi^{-1}(\lambda')=F\prod\limits_{i=1}^4(1+\omega_i'+\cdots+\omega_i'^{\ell-1})$,
we have
$$\gamma_k\lambda'=\langle\omega_i',\omega_1\rangle^{-16}\langle\omega_i',\omega_2\rangle^{-30}\langle\omega_i',\omega_3\rangle^{-42}\langle\omega_i',\omega_4\rangle^{-22}\lambda'$$
and $\eta_k\lambda'=0$. So we get $\xi\lambda'=\xi(g)\lambda'$ for
all $\xi\in\mathfrak{b}^*$.
\end{proof}

\begin{theorem}
Assume that $r$ and $s$ are $\ell$-th roots of unity, then $\mathcal
{D}(\mathfrak{b})$ has a ribbon element.
\end{theorem}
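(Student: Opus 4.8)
The plan is to apply the Kauffman--Radford criterion (Theorem \ref{81}(ii)) to the finite-dimensional Hopf algebra $A=\mathfrak{b}$, whose Drinfel'd double is $\mathcal{D}(\mathfrak{b})$. By Proposition \ref{84} the distinguished group-like element of $\mathfrak{b}$ is $g=\omega_{2\rho}^{-1}$, and the Proposition preceding Proposition \ref{83} identifies the distinguished group-like element of $\mathfrak{b}^*$ as the character $\gamma$ with $\gamma(E_k)=0$ and $\gamma(\omega_k)=\prod_{j=1}^4\langle\omega_j',\omega_k\rangle^{d_j}$, where $2\rho=16\alpha_1+30\alpha_2+42\alpha_3+22\alpha_4$, i.e. $d=(16,30,42,22)$. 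Since the root lattice of $F_4$ coincides with its weight lattice, the half-sum $\rho=8\alpha_1+15\alpha_2+21\alpha_3+11\alpha_4$ lies in the root lattice, so I set $h:=\omega_\rho^{-1}=\omega_1^{-8}\omega_2^{-15}\omega_3^{-21}\omega_4^{-11}\in\mathfrak{b}$ and let $\delta\in\mathfrak{b}^*$ be the algebra homomorphism with $\delta(E_k)=0$ and $\delta(\omega_k)=\prod_{j=1}^4\langle\omega_j',\omega_k\rangle^{c_j}$ for $c=(8,15,21,11)$. Both are group-like (the $\gamma_j$ of Lemma \ref{61} are group-like, and $h$ is a product of group-likes), and because the coefficient vector of $2\rho$ is exactly twice that of $\rho$ I obtain immediately $h^2=\omega_{2\rho}^{-1}=g$ and $\delta^2=\gamma$. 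This settles part (i) of Theorem \ref{81}.

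It remains to verify the ribbon condition $S^2(a)=h(\delta\rightharpoonup a\leftharpoonup\delta^{-1})h^{-1}$ for all $a\in\mathfrak{b}$. Since $\delta$ is group-like, the map $a\mapsto \delta\rightharpoonup a\leftharpoonup\delta^{-1}$ is an algebra endomorphism, as is conjugation by $h$ and $S^2$; hence it suffices to check the identity on the generators $\omega_k^{\pm1}$ and $E_k$. For the group-likes $\omega_k$ both sides equal $\omega_k$, so the identity is trivial. For $a=E_k$ I compute, using $S(E_k)=-\omega_k^{-1}E_k$ and relation (F2), that $S^2(E_k)=\omega_k^{-1}E_k\omega_k=a_{kk}^{-1}E_k$. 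On the other hand $\Delta(E_k)=E_k\otimes1+\omega_k\otimes E_k$ together with $\delta(E_k)=0$ give $\delta\rightharpoonup E_k\leftharpoonup\delta^{-1}=\delta(\omega_k)^{-1}E_k$, and using $\langle\omega_j',\omega_k\rangle=a_{kj}$ to rewrite $\delta(\omega_k)=\prod_i a_{ki}^{c_i}$ and $\omega_\rho^{-1}E_k\omega_\rho=\prod_i a_{ik}^{-c_i}E_k$, I get
$$h(\delta\rightharpoonup E_k\leftharpoonup\delta^{-1})h^{-1}=\delta(\omega_k)^{-1}\,\omega_\rho^{-1}E_k\omega_\rho=\Big(\prod_{i=1}^4 a_{ki}^{-c_i}\Big)\Big(\prod_{i=1}^4 a_{ik}^{-c_i}\Big)E_k=\prod_{i=1}^4(a_{ik}a_{ki})^{-c_i}\,E_k.$$

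Thus the statement reduces to the scalar identity $\prod_{i=1}^4(a_{ik}a_{ki})^{-c_i}=a_{kk}^{-1}$ for each $k$. The crux is the observation, checked directly from the matrix $(a_{ij})$, that $a_{ij}a_{ji}=(rs^{-1})^{2(\alpha_i,\alpha_j)}$ for all $i,j$, where $(\cdot,\cdot)$ is the invariant form normalized by $(\alpha_1,\alpha_1)=(\alpha_2,\alpha_2)=2$ and $(\alpha_3,\alpha_3)=(\alpha_4,\alpha_4)=1$; in particular $a_{kk}=(rs^{-1})^{(\alpha_k,\alpha_k)}$. Substituting gives $\prod_i(a_{ik}a_{ki})^{-c_i}=(rs^{-1})^{-2(\rho,\alpha_k)}$, and the defining property of the Weyl vector, $(\rho,\alpha_k^\vee)=1$, i.e. $(\rho,\alpha_k)=\tfrac12(\alpha_k,\alpha_k)$, turns this into $(rs^{-1})^{-(\alpha_k,\alpha_k)}=a_{kk}^{-1}$, exactly $S^2(E_k)$. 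Hence the ribbon condition holds on all generators, and Theorem \ref{81}(ii) yields a ribbon element of $\mathcal{D}(\mathfrak{b})$.

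The genuinely delicate step is the bookkeeping in the middle paragraph: one must carefully combine the left and right $\mathfrak{b}^*$-actions with the conjugation by $h$ so that the two separate families of structure constants $a_{ik}$ (from conjugation) and $a_{ki}$ (from $\delta$) assemble into the symmetric products $a_{ik}a_{ki}$. Once the identity $a_{ij}a_{ji}=(rs^{-1})^{2(\alpha_i,\alpha_j)}$ is recorded and the classical property of $\rho$ is invoked, no essential difficulty remains.
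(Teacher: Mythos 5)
Your proposal is correct and follows essentially the same route as the paper: both apply the Kauffman--Radford criterion with $h=\omega_\rho^{-1}$ and $\delta=(\omega_\rho'\,|\,\cdot)$, reduce the ribbon condition to the generators, and identify $h(\delta\rightharpoonup E_k\leftharpoonup\delta^{-1})h^{-1}$ with $\omega_k^{-1}E_k\omega_k=S^2(E_k)$. The only difference is that you explicitly verify the scalar identity $\prod_i(a_{ik}a_{ki})^{-c_i}=a_{kk}^{-1}$ via $a_{ij}a_{ji}=(rs^{-1})^{2(\alpha_i,\alpha_j)}$ and $(\rho,\alpha_k)=\tfrac12(\alpha_k,\alpha_k)$, a step the paper asserts without proof.
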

\begin{proof}[{\bf Proof.}] By Proposition \ref{84}, we know that
$g=\omega_{2\rho}^{-1}$ is a distinguished group-like element in
$\mathfrak{b}$. Let $h=\omega_\rho^{-1}\in\mathfrak{b}$, then $h$ is a
group-like element in $\mathfrak{b}$ and $h^2=g$. Set
$\delta=(\omega_{\rho}'|\cdot)$. As
$\gamma=(\omega_{2\rho}'|\cdot)$, $\delta$ is a group-like element in
$\mathfrak{b}$ and $\delta^2=\gamma$, where
$$\delta(E_k)=0, \quad \delta(\omega_k)=\langle\omega_1',\omega_k\rangle^8\langle\omega_2',\omega_k\rangle^{15}\langle\omega_3',\omega_k\rangle^{21}\langle\omega_4',\omega_k\rangle^{11},\quad 1\leq k \leq4.$$

Next we calculate:
$$h(\delta\rightharpoonup\omega_k\leftharpoonup\delta^{-1})h^{-1}=\delta(\omega_k)\delta^{-1}(\omega_k)h\omega_kh^{-1}=\omega_k=S^2(\omega_k),$$
\begin{equation*}
\begin{split}
h(\delta\rightharpoonup E_k\leftharpoonup\delta^{-1})h^{-1}&=\delta(1)\delta^{-1}(\omega_k)hE_kh^{-1}\\
&=\langle\omega_1',\omega_k\rangle^{-8}\langle\omega_2',\omega_k\rangle^{-15}\langle\omega_3',\omega_k\rangle^{-21}\langle\omega_4',\omega_k\rangle^{-11}hE_kh^{-1}\\
&=\langle\omega_{\rho}',\omega_k\rangle^{-1}\langle\omega_k',\omega_{\rho}\rangle^{-1}E_k\\
&=\omega_k^{-1}E_k\omega_k\\
&=S^2(E_k).
\end{split}
\end{equation*}
This completes the proof by Theorem \ref{81}.
\end{proof}

Under the assumption of Theorem \ref{62}, we have
$\mathfrak{u}_{r,s}(F_4)\cong\mathcal {D}(\mathfrak{b})$, so we get
\begin{coro}
Assume that $\theta$ is a primitive $\ell$-th root of unity in
$\mathbb{K}$, $r=\theta^y, s=\theta^z$, and
$(4(y^4{+}z^4{-}y^2z^2),\ell)=1$, then $\mathfrak{u}_{r,s}(F_4)$ is a
ribbon Hopf algebra. $\hfill\Box$
\end{coro}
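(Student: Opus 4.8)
The plan is to deduce the Corollary directly from the isomorphism $\mathcal{D}(\mathfrak{b})\cong\mathfrak{u}_{r,s}(F_4)$ of Theorem \ref{62} together with the preceding theorem, which produces a ribbon element on $\mathcal{D}(\mathfrak{b})$, by transporting the entire quasitriangular-ribbon structure across the isomorphism. First I would check that both hypotheses are in force. Since $\theta$ is a primitive $\ell$th root of unity and $r=\theta^y$, $s=\theta^z$, both $r$ and $s$ are $\ell$th roots of unity, so the theorem asserting that $\mathcal{D}(\mathfrak{b})$ has a ribbon element applies; and the arithmetic condition $(4(y^4+z^4-y^2z^2),\ell)=1$ is exactly what Theorem \ref{62} requires, so a Hopf algebra isomorphism $\Phi\colon\mathcal{D}(\mathfrak{b})\to\mathfrak{u}_{r,s}(F_4)$ is available. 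Because every Drinfel'd double is quasitriangular, $(\mathcal{D}(\mathfrak{b}),R,v)$ is already a genuine ribbon Hopf algebra, with $R$ its canonical $R$-matrix and $v$ the ribbon element.

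It then remains to observe that the property of being a ribbon Hopf algebra is invariant under Hopf algebra isomorphism. Concretely I would set $R'=(\Phi\otimes\Phi)(R)$ and $v'=\Phi(v)$, and verify that $(\mathfrak{u}_{r,s}(F_4),R',v')$ satisfies all the defining axioms. Since $\Phi$ intertwines multiplication, unit, comultiplication, counit and antipode, $R'$ is immediately an invertible element satisfying $(\Delta')^{op}(a)=R'\,\Delta'(a)\,(R')^{-1}$ for all $a$, together with the two compatibility identities $(\Delta'\otimes\mathrm{id})R'=R'_{13}R'_{23}$ and $(\mathrm{id}\otimes\Delta')R'=R'_{13}R'_{12}$, so $(\mathfrak{u}_{r,s}(F_4),R')$ is quasitriangular. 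Writing $R=\sum x_i\otimes y_i$, the element $u=\sum S(y_i)x_i$ is carried to $u'=\sum S'(\Phi(y_i))\Phi(x_i)=\Phi(u)$, whence the Casimir element transforms as $c'=u'S'(u')=\Phi(c)$; this is the computation I would record in detail.

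With these identifications the four ribbon axioms for $v'=\Phi(v)$ reduce to applying $\Phi$ (or $\Phi\otimes\Phi$) to the corresponding axioms for $v$: one gets $(v')^2=\Phi(v^2)=\Phi(c)=c'$, then $S'(v')=\Phi(S(v))=\Phi(v)=v'$, then $\varepsilon'(v')=\varepsilon(v)=1$, and finally $\Delta'(v')=(\Phi\otimes\Phi)\Delta(v)=(R'_{21}R')^{-1}(v'\otimes v')$, while centrality of $v'$ follows from centrality of $v$ and the fact that $\Phi$ is an algebra isomorphism. The argument has no serious obstacle; the only point demanding care is the bookkeeping that the canonical $R$-matrix and the Casimir element are natural with respect to Hopf isomorphisms, so that $R'$ and $c'$ are precisely the data manufactured from $R$ and $c$. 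Once this naturality is in hand, the Corollary follows at once.
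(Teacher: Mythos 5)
Your proposal is correct and follows essentially the same route as the paper: the paper likewise deduces the corollary by combining the preceding theorem (that $\mathcal{D}(\mathfrak{b})$ has a ribbon element when $r,s$ are $\ell$-th roots of unity) with Theorem 6.2 (the isomorphism $\mathcal{D}(\mathfrak{b})\cong\mathfrak{u}_{r,s}(F_4)$ under the condition $(4(y^4{+}z^4{-}y^2z^2),\ell)=1$). The only difference is that you spell out the routine transport of the quasitriangular and ribbon data across the Hopf isomorphism, which the paper leaves implicit.
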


\vskip30pt \centerline{\bf ACKNOWLEDGMENT}

\vskip15pt 
N.H. Hu would like to thank for the support of a
two-months invited guest professor fellowship from the University of
Paris Diderot during April 1st to May 31, 2016, as well as the hospitality of Professor Marc Rosso
and his colleagues. X.L. Wang was indebted to
host Professor Igor Pak for his kind invitation and help when she
visited University of California, Los Angeles, from Sept. of 2015 to August of 2016.

\bibliographystyle{amsalpha}

\end{document}